\numberwithin{equation}{section}
\newcommand\R{\mathbb{R}}
\renewcommand\i{{\rm 1\kern -.3600em 1}}
\newtheorem{theorem}{Theorem}[section]
\newtheorem{corollary}[theorem]{Corollary}
\newtheorem{lemma}[theorem]{Lemma}
\newtheorem{proposition}[theorem]{Proposition}
\theoremstyle{remark}
\theoremstyle{remark}
\theoremstyle{remark}
\newtheorem{remark}[theorem]{Remark}
\newcommand{\so}{\mathbf s}
\newcommand{\co}{\mathbf c}
\newcommand{\SO}{\mathbf S}
\newcommand{\LO}{\mathbf L}
\begin{document}

\vspace{-20mm}
\begin{center}{\Large \bf
Stirling operators in spatial combinatorics}
\end{center}

{\large Dmitri Finkelshtein}\\ Department of Mathematics, Computational Foundry, Swansea University, Bay Campus,  Swansea SA1 8EN, U.K.;
e-mail: \texttt{d.l.finkelshtein@swansea.ac.uk}\vspace{2mm}

{\large Yuri Kondratiev}\\ Fakult\"at f\"ur Mathematik, Universit\"at Bielefeld,
            33615~Bielefeld, Germany;\\
e-mail: \texttt{kondrat@mathematik.uni-bielefeld.de}\vspace{2mm}

{\large Eugene Lytvynov}\\ Department of Mathematics, Computational Foundry, Swansea University, Bay Campus,  Swansea SA1 8EN, U.K.;
e-mail: \texttt{e.lytvynov@swansea.ac.uk}\vspace{2mm}

{\large Maria Jo\~{a}o Oliveira}\\ DCeT, Universidade Aberta,
            1269-001 Lisbon, Portugal; CMAFCIO, University of Lisbon, 1749-016 Lisbon, Portugal;\\
e-mail: \texttt{mjoliveira@ciencias.ulisboa.pt}\vspace{2mm}

%{\bf Corresponding author:} {\large Eugene Lytvynov}\\ Department of Mathematics, Computational Foundry, Swansea University, Bay Campus,  Swansea SA1 8EN, U.K.;
%e-mail: \texttt{e.lytvynov@swansea.ac.uk}\vspace{2mm}

{\small
\begin{center}
{\bf Abstract}
\end{center}
\noindent We define and study a spatial (infinite-dimensional) counterpart of Stirling numbers. In classical combinatorics, the Pochhammer symbol $(m)_n$ can be extended from a natural number $m\in\mathbb N$ to the falling factorials $(z)_n=z(z-1)\dotsm (z-n+1)$ of an argument $z$ from $\mathbb F=\mathbb R\text{ or }\mathbb C$, and Stirling numbers of the first and second kinds are the coefficients of the expansions of $(z)_n$ through $z^k$, $k\leq n$ and vice versa. When taking into account spatial positions of elements in a locally compact Polish space $X$, we replace $\mathbb N$ by the space of configurations---discrete Radon measures $\gamma=\sum_i\delta_{x_i}$ on $X$, where $\delta_{x_i}$ is the Dirac measure with mass at $x_i$. The spatial falling factorials $(\gamma)_n:=\sum_{i_1}\sum_{i_2\ne i_1}\dotsm\sum_{i_n\ne i_1,\dots, i_n\ne i_{n-1}}\delta_{(x_{i_1},x_{i_2},\dots,x_{i_n})}$ can be naturally extended to  mappings $M^{(1)}(X)\ni\omega\mapsto (\omega)_n\in M^{(n)}(X)$, where $M^{(n)}(X)$ denotes the space of $\mathbb F$-valued, symmetric (for $n\ge2$) Radon measures on $X^n$. There is a natural duality between $M^{(n)}(X)$ and the space $\mathcal {CF}^{(n)}(X)$ of $\mathbb F$-valued, symmetric  continuous functions on $X^n$ with compact support.  The Stirling operators of the first and second kind, $\mathbf{s}(n,k)$ and $\mathbf{S}(n,k)$, are  linear operators, acting between spaces $\mathcal {CF}^{(n)}(X)$ and $\mathcal {CF}^{(k)}(X)$ such that their dual operators, acting from $M^{(k)}(X)$ into $M^{(n)}(X)$, satisfy  $(\omega)_n=\sum_{k=1}^n\mathbf{s}(n,k)^*\omega^{\otimes k}$ and $\omega^{\otimes n}=\sum_{k=1}^n\mathbf{S}(n,k)^*(\omega)_k$, respectively.  In the case where $X$ has only a single point, the Stirling operators can be identified with Stirling numbers.  We derive combinatorial properties of the Stirling operators, present their connections with a generalization of the Poisson point process and with the Wick ordering under the canonical commutation relations.
\vspace{2mm}

{\bf Keywords:} Spatial falling factorials, Stirling operators,  Poisson functional, Wick ordering for canonical commutation relations
\vspace{2mm}

\noindent
{\it Mathematics Subject Classification (2020):} Primary 11B73, 46G25, 47B39, 47B93, 81S05;  Secondary 05A10, 05A19, 60G55

\section{Introduction}\label{rts54q43}

The classical combinatorics deals with finite structures, e.g.\ cardinalities of
finite sets, which, in particular, may represent populations. This brings clear
intuitive interpretations of numerous combinatorial objects, especially in their
probabilistic applications, see e.g.\ {\cite[Chapter~II]{Feller}}. The state
space is hence the set of nonnegative integers, $\mathbb N_0=\{0,1,2,\dots\}$. 
Defined initially for numbers from $\mathbb N_0$, many combinatorial objects admit natural extensions to  a variable from $\mathbb F=\mathbb R$ or $\mathbb C$. This leads to important relations between combinatorics and analysis, in particular,   difference calculus.
 
 When studying population dynamics in biology or ecology, one is often interested not only in the size of the population in a certain region but also how the population is spatially distributed, see e.g.\  \cite{DLM}. Let $X$ denote the space in which the population is located. For a sufficient generality, we assume that $X$ is a locally compact Polish space. The spatial distribution of a population in $X$ is modelled by a configuration $\gamma$, a locally finite subset of $X$. We denote by $\Gamma(X)$ the space of all configurations $\gamma$ in $X$.

We note that configuration spaces are widely  used in different branches of mathematics and applications. We only mention the theory of point processes (e.g.\ \cite{DVJ}), statistical physics of continuous systems (e.g.\ \cite{Preston}), geometry and topology (e.g.\ \cite{AKR,FH}), spatial ecology (e.g.\ \cite{DLM}).
 Contrary  to $\mathbb N_0$, the configuration space $\Gamma(X)$ possesses both continuous topological properties (arising from the topology in $X$)
  and  the discrete structure of each particular configuration.\footnote{Although a configuration is a locally finite set, the total number of points of a configuration can be infinite. This feature is important for many branches of mathematics, in particular, for the theory of point processes and statistical physics of continuous systems.} 
  
 Having in mind the spatial distribution of a population, we may naturally think about extending certain notions and results of the classical combinatorics to the configuration space $\Gamma(X)$. The aim of this paper is to show that  Stirling numbers and many related results have their natural counterparts in the  spatial combinatorics. In turn, our results within the spatial combinatorics cast new light on the classical combinatorics related to Stirling numbers.

Let us now briefly describe what we mean under a spatial counterpart of  Stirling numbers. Let $n\in\mathbb N=\{1,2,3,\dots\}$. For $k\in\mathbb N_0$, $(k)_n=\frac1{n!}\binom kn= k(k-1)\dotsm(k-n+1)$ is called a {\it falling factorial}.  The latter expression allows one to define  falling factorials as polynomials of a variable $z\in\mathbb F$: $(z)_n=z(z-1)\dotsm(z-n+1)$. The {\it Stirling numbers of the first kind}, $s(n,k)$, are defined through the expansion $(z)_n=\sum_{k=1}^ns(n,k)z^k$, while the {\it Stirling numbers of the second kind}, $S(n,k)$, are defined through the inverse expansion $z^n=\sum_{k=1}^nS(n,k)(z)_k$. We refer e.g.\ to the monographs \cite{QG,MS} and the references therein for studies of  Stirling numbers in combinatorics and their applications in mathematical physics.
 
 It is standard to interpret a configuration $\gamma=\{x_i\}\in\Gamma(X)$ as a discrete  measure on $X$: $\gamma=\sum_{i}\delta_{x_i}$ where $\delta_{x_i}$ is the Dirac measure with mass at $x_i$. Then, for $n\in\mathbb N$, one can naturally define a {\it (spatial) falling factorial} $(\gamma)_n$ as the discrete measure on $X^n$ given by
 \begin{equation}\label{rts5w53w}
 (\gamma)_n:=\sum_{x_1\in\gamma}\sum_{x_2\in\gamma\setminus\{x_1\}}\dotsm\sum_{x_n\in\gamma\setminus\{x_1,\dots,x_{n-1}\}}\delta_{x_1}\otimes\delta_{x_2}\otimes\dots\otimes\delta_{x_n},\end{equation}
 see \cite{FKLO}. 
 If $A\subset X$ is compact and $\gamma(A)=k\in\mathbb N_0$ (i.e., the configuration $\gamma$ has $k$ points in $A$), then $(\gamma)_n(A^n)=(k)_n$\,.
Similarly to the classical case, one can now extend the definition of the falling factorials $(\gamma)_n$ to a linear space.  More precisely, let $M(X)$ denote the space of $\mathbb F$-valued Radon measures on $X$. For each $\omega\in M(X)$,  we define an $\mathbb F$-valued Radon measure $(\omega)_n$ on $X^n$ by\footnote{In the case where $X$ is a smooth manifold, the definition of the falling factorials can be further extended to the case where $\omega$ is a generalized function on $X$,  see \cite{BKKL,FKLO,KunaInfusino}. This brings further connections with infinite dimensional analysis. However, in this paper, we will not discuss this wider extension.} 
\begin{align}
&(\omega)_n(dx_1\dotsm dx_n):=\omega(dx_1)\big(\omega(dx_2)-\delta_{x_1}(dx_2)\big)\notag\\
&\qquad\times\dotsm\times\big(\omega(dx_n)-\delta_{x_1}(dx_n)-\delta_{x_2}(dx_n)-\dots-\delta_{x_{n-1}}(dx_n)\big).\label{xesa332a}\end{align}
Each $(\omega)_n$ is a symmetric measure, i.e., it remains invariant under the natural action of the symmetric group on $X^n$.

 For $n\in\mathbb N$, let $\mathcal F^{(n)}(X)$ denote the space of measurable, bounded, compactly supported, symmetric functions $f^{(n)}:X^n\to\mathbb F$. We define {\it Stirling operators of the first kind} as the linear operators 
 $\so(n,k):\mathcal F^{(n)}(X)\to\mathcal F^{(k)}(X)$ satisfying
\begin{equation}\label{cxta5wq5y43w}
\int_{X^n}f^{(n)}\,d(\omega)_n=\sum_{k=1}^n\int_{X^k}\so(n,k)f^{(n)}\,d\omega^{\otimes k},\quad\text{for all }\omega\in M(X),\end{equation}
 and {\it Stirling operators of the second kind} as the linear operators 
 $\SO(n,k):\mathcal F^{(n)}(X)\to\mathcal F^{(k)}(X)$ satisfying
\begin{equation}\label{cde5w35}
\int_{X^n}f^{(n)}\,d\omega^{\otimes n}=\sum_{k=1}^n\int_{X^k}\SO(n,k)f^{(n)}\,d(\omega)_k, \quad\text{for all }\omega\in M(X).\end{equation}
In the case where the underlying space $X$ has a single point (hence the whole population is located at this point), we may obviously identify the operators $\so(n,k)$ and $\SO(n,k)$ with the Stirling numbers $s(n,k)$ and $S(n,k)$, respectively. In the case where $X=\{x_1,\dots,x_n\}$ with $n\in\mathbb N$, $n\ge2$,  the Stirling operators $\so(n,k)$ and $\SO(n,k)$ act in finite-dimensional spaces, hence they are multivariate extensions of the corresponding Stirling numbers.

Let us now describe the content of the paper. In Section \ref{sa45wq43q}, we discuss the preliminaries. In particular, following \cite{FKLO}, we discuss the binomial property of the falling factorials $(\omega)_n$ and their lowering operators. 

In Section~\ref{sa4t2q}, we discuss basic results on the Stirling operators. These include the explicit formulas for the action of the Stirling operators, the recurrence relations satisfied by the Stirling operators, and the explicit form of their generating functions. We also introduce  {\it Lah operators}, which connect the rising factorials, $(\omega)^{(n)}=(-1)^n(-\omega)_n$\,, with the falling factorials, $(\omega)_n$. We derive the explicit form of the Lah operators and their generating function. 

In Section \ref{few53q}, we discuss an infinite dimensional counterpart of Euler's formula for the Stirling operators of the second kind,  compare with \cite[Section 9.1]{QG}. The proof of our result uses the binomial property of the falling factorials, $(\omega)_n$.  We also discuss how the infinite dimensional Euler formula is related to the correlation measure of a point process in $X$. 

Olson's identity (see e.g.\ \cite[Section 12.2]{QG}) is a generalization of the orthogonality identity satisfied by the Stirling numbers. In Section~\ref{tes5w}, by using  Euler's formula from Section \ref{few53q}, we prove an infinite dimensional counterpart of Olson's identity.  Even in the case of a single-point space $X$, the obtained identity yields an extension of the classical Olson's identity for  Stirling numbers. We also discuss a couple of other identities satisfied by the Stirling operators. The proofs of these identities significantly use the binomial property of  $(\omega)_n$. 

Our studies of the Stirling operators naturally lead us to a generalization of  Poisson point process. Recall that, if $\sigma$ is a non-atomic positive Radon measure on $X$, one can define the Poisson point process, with intensity measure $\sigma$ as a probability measure on the configuration space $\Gamma(X)$. We denote by $\mathbb E_\sigma$ the expectation with respect to this probability measure. Let $p$ be a polynomial on $M(X)$, i.e., a function $p:M(X)\to\mathbb F$ which is of the form 
\begin{equation}\label{dftwfedt}
p(\omega)=f^{(0)}+\sum_{k=1}^n\int_{X^k}f^{(k)}\,d\omega^{\otimes k},\end{equation}
where $f^{(0)}\in\mathbb F$ and $f^{(k)}\in\mathcal F^{(k)}(X)$ for $k=1,\dots,n$. 
Then $p$ is integrable  and we can evaluate its expectation, $\mathbb E_\sigma(p)$. If we denote by $\mathcal P(M(X))$ the space of all polynomials on $M(X)$,  the expectation 
$\mathbb E_\sigma$ determines a linear functional $\mathbb E_\sigma:\mathcal P(M(X))\to\mathbb F$. In Appendix, we prove that such a functional can be naturally defined for any $\mathbb F$-valued Radon measure $\sigma\in M(X)$. We call $\mathbb E_\sigma$ the {\it Poisson functional with intensity measure} $\sigma$. We prove that several standard properties of the Poisson point process (for example, Mecke identity, see Proposition~\ref{g6ew4}) still hold for the Poisson functional $\mathbb E_\sigma$. The main result of Section~\ref{ufxdqyr} (Theorem~\ref{e6ew63}) states a connection between the Poisson functional $\mathbb E_\sigma$ and the Stirling operators of the second kind. More precisely, for a monomial $p(\omega)=\int_{X^n}f^{(n)}\,d\omega^{\otimes n}$, it holds that 
$\mathbb E_\sigma(p)=\sum_{k=1}^n\int_{X^k}\SO(n,k)f^{(n)}\,d\sigma^{\otimes k}$.

Let $a^+$, $a^-$ be a pair of (adjoint) operators satisfying the canonical commutation relation 
\begin{equation}\label{xzzaaaa}
a^-a^+=a^+a^-+1,\end{equation}
where  $a^+$ is called a {\it creation operator} and $a^-$ an {\it annihilation operator}. The operator $\rho=a^+a^-$ is called a {\it number operator}, or a {\it particle density}. Due to the commutation relation, the operator $\rho^n$ can be represented as a linear combination of the {\it Wick (normally) ordered operators}, $(a^+)^k(a^-)^k$. Katriel \cite{Katriel} in 1974 found the explicit formula, $\rho^n=\sum_{k=1}^nS(n,k)(a^+)^k(a^-)^k$. Katriel's result includes, as a special case,  the classical formula of Gr\"unert (1843) for the action of the Euler operator $t\frac d{dt}$,
$$\bigg(t\frac{d}{dt}\bigg)^n=\sum_{k=1}^nS(n,k)\,t^k\bigg(\frac{d}{dt}\bigg)^k.$$ 
 The seminal paper \cite{Katriel} led to numerous generalizations of  Stirling numbers; we refer to \cite[Subsection~ 1.2.3]{MS} for a long list of references. The main result of Section~\ref{gwfdytwyc} (Theorem~\ref{taw4aq234q}) is an extension of Katriel's formula to the case of a family of operators $(a^+(x),a^-(x))_{x\in X}$ satisfying the canonical commutation relations, see formula~\eqref{r4wqa4y} below. In fact, it is only in the case where $X$ is a discrete set that the operators $a^+(x)$ and $a^-(x)$ are well-defined. In the general case, these are operator-valued distributions, hence one needs to choose a reference measure $\sigma$ on $X$ and consider smeared operators $a^+(\varphi) =\int_X\varphi(x)a^+(x)\sigma(dx)$, $a^-(\varphi)=\int_X\varphi(x)a^-(x)\sigma(dx)$, and $\rho(\varphi)=\int_X\varphi(x)a^+(x)a^-(x)\sigma(dx)$. 
Theorem~\ref{taw4aq234q} states the explicit formula for the product $\rho(\varphi_1)\dotsm\rho(\varphi_n)$ in terms of the Wick ordered terms. This formula uses the Stirling operators $\SO(n,k)$ at the place of the Stirling numbers $S(n,k)$ in Katriel's formula. We also consider an infinite dimensional generalization of Gr\"unert's formula (Theorem~\ref{dwq3q32q}), which appears to be a special case of the formula from Theorem~\ref{taw4aq234q}. 

In Theorem~\ref{y6weu43}, for an arbitrary measure $\sigma\in M(X)$,  we consider a certain  representation of the canonical commutation relations and a {\it vacuum functional} $\tau_\sigma$ on the commutative unital algebra generated by the particle densities  $\rho(\varphi)$ and the identity operator. We show that the functional $\tau_\sigma$ can be naturally identified with the Poisson functional with intensity measure $\sigma$. This extends the available results on the Fock space realization of the classical Poisson point process (e.g.\ \cite{GGPS,HP,Surgailis}) to the case of an $\mathbb F$-valued intensity measure $\sigma$.

The classical {\it Touchard (or exponential) polynomials} are given by 
$$T_n(z)=\sum_{k=1}^nS(n,k)z^k,$$
 while $B_n=T_n(1)$ are called the {\it Bell numbers}. ($B_n$ counts the number of all unordered partitions of a set of $n$ elements.)
In Section~\ref{ydrsdese3aqw}, for each $n\in\mathbb N$ and $\omega\in M(X)$,  we define an $\mathbb F$-valued Radon measure $T_n(\omega)$ on $X^n$ that satisfies 
$$\int_{X^n}f^{(n)}\,dT_n(\omega)=\sum_{k=1}^n\int_{X^k}\SO(n,k)f^{(n)}\,d\omega^{\otimes k}\quad\text{for all }f^{(n)}\in \mathcal F^{(n)}(X).$$
We call $T_n$ {\it Touchard (or exponential)  polynomials on} $M(X)$.  Comparing the definition of $T_n(\omega)$ with Theorem~\ref{e6ew63} gives us an immediate connection between $T_n(\omega)$ and the Poisson functional~$\mathbb E_\omega$ (Corollary~\ref{sesawawqa2q}). We prove several further properties of $T_n(\omega)$, including their explicit form, connections with the infinite dimensional Gr\"unert's formula, the binomial property, recurrence formulas, and the explicit form of their generating function.
If $\nu$ is a probability measure on $X$, we call $B_n(\nu)=T_n(\nu)$ the {\it $n$th Bell measure corresponding to~$\nu$}. Proposition~\ref{qwdwswxe} shows that $B_n(\nu)$ is related to the expectation of a certain infinite sum involving an infinite sequence of independent random variables that have distribution~$\nu$.

Finally, in Section \ref{xzrwaq4yq4q}, we discuss an open problem related to Theorem~\ref{taw4aq234q}.

\section{Preliminaries}\label{sa45wq43q}

\subsection{Polynomials on Radon measures}

Let $X$ be a locally compact Polish space. Let $\mathcal B(X)$ denote the Borel $\sigma$-algebra on $X$ and let $\mathcal B_0(X)$ denote the collection of all precompact sets from $\mathcal B(X)$. Recall that a positive Radon measure $\sigma$ on  $(X,\mathcal B(X))$ is  a  measure satisfying $\sigma(A)<\infty$ for all $A\in\mathcal B_0(X)$; a real-valued (signed) Radon measure $\omega$ on $X$ has the form $\omega=\sigma_1-\sigma_2$, where $\sigma_1$ and $\sigma_2$ are positive Radon measures; and a complex-valued Radon  measure $\omega$ on $X$ has the form $\omega=\omega_1+i\omega_2$, where $\omega_1$ and $\omega_2$ are real-valued  Radon measures. Note that, generally speaking,  a real-valued or complex-valued Radon measure  is well-defined on $\mathcal B_0(X)$ only.
Let $\mathbb F$ denote either $\R$ or $\mathbb C$, and we denote by  $M(X)$ the set of all $\mathbb F$-valued Radon measures on $X$. 

A configuration $\gamma$ in $X$ is a subset of $X$ that contains only a finite number of elements in each compact set $K\subset X$. In particular, the set $\gamma$ is either  finite (possibly empty) or countable. We denote by $\Gamma(X)$ the collection of all configurations in $X$.  By identifying each configuration $\gamma=\{x_i\}$ with the (positive) Radon measure $\gamma=\sum_i\delta_{x_i}$, we get the inclusion
$\Gamma(X)\subset M(X)$.

We denote by $M^{(n)}(X)$ the 
set of all symmetric Radon measures on $X^n$.  Obviously, for each $\omega\in M(X)$, the product measure $\omega^{\otimes n}$ belongs to $M^{(n)}(X)$. Let also $M^{(1)}(X):=M(X)$, $M^{(0)}(X):=\mathbb F$ and $\omega^{\otimes 0}:=1$ for $\omega\in M(X)$.

We denote by $\mathcal F(X)$ the space of all $\mathbb F$-valued bounded measurable functions on $X$ with compact support, and by $\mathcal{CF}(X)$ the space of all $\mathbb F$-valued continuous functions on $X$ with compact support.
For each $n\ge 2$, we similarly define the spaces $\mathcal F(X^n)$ and $\mathcal {CF}(X^n)$. We denote by $\mathcal F^{(n)}(X)$ and $\mathcal{CF}^{(n)}(X)$ the spaces of all symmetric functions from $\mathcal F(X^n)$ and $\mathcal {CF}(X^n)$, respectively. We also denote $\mathcal F^{(1)}(X):=\mathcal F(X)$, $\mathcal{CF}^{(1)}(X):=\mathcal{CF}(X)$ and $\mathcal F^{(0)}(X):=\mathbb F$.

For $\mu^{(n)}\in M^{(n)}(X)$ and $f^{(n)}\in\mathcal F^{(n)}(X)$, we denote
\begin{equation}\label{dera4tq2y4}
\langle\mu^{(n)},f^{(n)}\rangle:=\int_{X^n}f^{(n)}\,d\mu^{(n)},\quad n\in\mathbb N,\end{equation}
and $\langle\mu^{(0)},f^{(0)}\rangle:=\mu^{(0)}f^{(0)}$ for $n=0$.  

%For any functions $f^{(n)}:X^n\to\mathbb F$ and $g^{(m)}:X^m\to\mathbb F$ ($m,n\in\mathbb N$), their tensor product  $f^{(n)}\otimes g^{(m)}:X^{n+m}\to\mathbb F$ is defined as usual. 

Let $\mathfrak S_n$ denote the symmetric group of degree $n$.
For a function $f^{(n)}:X^n\to\mathbb F$, we denote by $P_nf^{(n)}$ the  symmetrization of $f^{(n)}$,
$$(P_nf^{(n)})(x_1,\dots,x_n):=\frac1{n!}\sum_{\pi\in \mathfrak S_n}f(x_{\pi(1)},\dots,x_{\pi(n)}).$$
 (For $n=1$, $P_1$ is just the identity.)

For functions $f^{(n)}:X^n\to\mathbb F$ and $g^{(m)}:X^m\to\mathbb F$, we define $f^{(n)}\otimes g^{(m)}:X^{n+m}\to\mathbb F$ by $(f^{(n)}\otimes g^{(m)})(x_1,\dots,x_{n+m}):=f^{(n)}(x_1,\dots,x_n)g^{(m)}(x_{n+1},\dots,x_{n+m})$. 
 For any $f^{(n)}\in\mathcal F^{(n)}(X)$ and $g^{(m)}\in\mathcal F^{(m)}(X)$, the symmetric tensor product of $f^{(n)}$ and $g^{(m)}$ is the function 
$$f^{(n)}\odot g^{(m)}:=P_{n+m}(f^{(n)}\otimes g^{(m)})\in\mathcal F^{(n+m)}(X).$$
 Obviously, if $f^{(n)}\in\mathcal {CF}^{(n)}(X)$ and $g^{(m)}\in\mathcal {CF}^{(m)}(X)$, then $f^{(n)}\odot g^{(m)}\in\mathcal {CF}^{(n+m)}(X)$. 
 
 Similarly, for $\mu^{(n)}\in M^{(n)}(X)$ and $\nu^{(m)}\in M^{(m)}(X)$, we define the symmetric product measure $\mu^{(n)}\odot\nu^{(m)}\in M^{(n+m)}(X)$. This measure can be characterized as the unique element of $M^{(n+m)}(X)$ that satisfies
$$\langle \mu^{(n)}\odot\nu^{(m)} ,f^{(n+m)}\rangle=\langle \mu^{(n)}\otimes\nu^{(m)} ,f^{(n+m)}\rangle\quad \text{for all }f^{(n+m)}\in\mathcal F^{(n+m)}(X). $$

By the polarization identity, each $\mu^{(n)}\in M^{(n)}(X)$ is uniquely characterized by the values $\langle \mu^{(n)},\xi^{\otimes n}\rangle$ with $\xi\in \mathcal {F}(X)$.  Note that, for $\mu^{(0)}\in M^{(0)}(X)=\mathbb F$ and $\xi\in\mathcal F(X)$,  $\langle \mu^{(0)},\xi^{\otimes 0}\rangle=\mu^{(0)}$.

A {\it polynomial on $M(X)$} is a function $p:M(X)\to\mathbb F$ of form \eqref{dftwfedt}. Using the notation~\eqref{dera4tq2y4}, we may write formula \eqref{dftwfedt} as follows:
\begin{equation}\label{tew53w5}
p(\omega)=\sum_{k=0}^n\langle \omega^{\otimes k},f^{(k)}\rangle.\end{equation}
If $f^{(n)}\ne 0$, we say that $p$ is a polynomial of degree $n$. Recall that $\mathcal P(M(X))$ denotes the space of all polynomials on $M(X)$.

 The space $M(X)$ can be naturally equipped with the vague topology.\footnote{The vague topology on $M(X)$ is the minimal topology on $M(X)$ such that, for each $f\in\mathcal {CF}(X)$, the  mapping $M(X)\ni\omega\mapsto\langle\omega,f\rangle\in\mathbb F$ is continuous.} Thus, if in formula \eqref{tew53w5}, $f^{(k)}$  belongs to $\mathcal {CF}^{(k)}(X)$ for each $k=1,\dots,n$, then $p$ is a continuous function on $M(X)$. This is why the subset of $\mathcal P(M(X))$ consisting of all polynomials of form \eqref{tew53w5} with $f^{(k)}\in \mathcal {CF}^{(k)}(X)$ will be called the {\it space of continuous polynomials on $M(X)$} and denoted by $\mathcal {CP}(M(X))$.

\subsection{Falling and rising factorials on Radon measures}

Following \cite{FKLO} (in particular, Subsections 5.1 and 5.2), we will now 
recall some basic properties of the falling and rising factorials on $M(X)$.
 For each $n\in\mathbb N$, the {\it falling factorial on $M(X)$ of degree $n$} is defined to be the mapping
$M(X)\ni\omega\mapsto (\omega)_n\in M^{(n)}(X)$
given by \eqref{xesa332a}. (It is not difficult to see that the  Radon measure on the right hand side of formula \eqref{xesa332a} is indeed symmetric.) Similarly, for each $n\in\mathbb N$, the {\it rising factorial on $M(X)$ of degree $n$} is defined to be the mapping
$M(X)\ni\omega\mapsto (\omega)^{(n)}\in M^{(n)}(X)$
given by
\begin{align}
&(\omega)^{(n)}(dx_1\dotsm dx_n):=\omega(dx_1)\big(\omega(dx_2)+\delta_{x_1}(dx_2)\big)\notag\\
&\qquad\times\dotsm\times\big(\omega(dx_n)+\delta_{x_1}(dx_n)+\delta_{x_2}(dx_n)+\dots+\delta_{x_{n-1}}(dx_n)\big).\label{fdeseas}\end{align}
Clearly, 
\begin{equation}\label{ctsrea4w}
(\omega)^{(n)}=(-1)^n(-\omega)_n,\quad n\in\mathbb N.\end{equation}
We will also  use the notation $(\omega)_0=(\omega)^{(0)}:=1$.

It follows directly from formulas \eqref{xesa332a} and \eqref{fdeseas} that, for each  $\omega\in M(X)$ and any $A\in\mathcal B_0(X)$, 
\begin{equation}\label{s5w53w}
(\omega)_n(A^n)=(\omega(A))_n,\quad (\omega)^{(n)}(A^n)=(\omega(A))^{(n)}.\end{equation}
Here, for $z\in\mathbb F$, $(z)_n$ and $(z)^{(n)}=(-1)^n(-z)_n$ are the classical falling and rising factorials, respectively. 

\begin{remark}\label{fdryde65re4w}
Assume that the underlying space $X$ has only a single point. In that case, we can identify each $M^{(n)}(X)$ with $\mathbb F$ and formula \eqref{s5w53w} means that $(\omega)^{(n)}$ and $(\omega)_n$ are the classical rising and falling factorials, respectively.
\end{remark}

For $\omega\in M(X)$ and $n\in\mathbb N$, we define the {\it (spatial) binomial coefficient}
$\binom \omega n:=\frac1{n!}\,(\omega)_n$.
For each configuration $\gamma=\sum_{i\ge1}\delta_{x_i}\in\Gamma(X)$,
\begin{equation}\label{reaq43q42}
\binom \gamma n=\sum_{\{i_1,\dots,i_n\}} \delta_{x_{i_1}}\odot \delta_{x_{i_2}}\odot\dots\odot \delta_{x_{i_n}},\end{equation}
compare with \eqref{rts5w53w}.

\begin{remark}\label{c4q4q}
It follows from \eqref{rts5w53w} or \eqref{reaq43q42} that, if a configuration $\gamma$ has strictly less than $n$ points, then $\binom \gamma n=(\gamma)_n=0$.
\end{remark}

Both the falling factorials and the rising factorials have the binomial property:
\begin{align}\label{5w53w2}
(\omega+\sigma)_n&=\sum_{k=0}^n\binom nk(\omega)_k\odot(\sigma)_{n-k},\\
\label{5w53w22} (\omega+\sigma)^{(n)}&=\sum_{k=0}^n\binom nk(\omega)^{(k)}\odot(\sigma)^{(n-k)}, \quad\omega,\sigma\in M(X)
\end{align}
 Furthermore, the following lowering property holds: for each $n\in\mathbb N_0$, $\omega\in M(X)$, and $x\in X$,
\begin{equation}\label{s5wuwu}
(\omega+\delta_x)_n-(\omega)_n=n\delta_x\odot(\omega)_{n-1},\quad (\omega)^{(n)}-(\omega-\delta_x)^{(n)}=n\delta_x\odot(\omega)^{(n-1)}.\end{equation}

It follows from \eqref{xesa332a} and \eqref{fdeseas}, that for each $f^{(n)}$ from $\mathcal {F}^{(n)}(X)$ or $\mathcal {CF}^{(n)}(X)$, the functions $\langle(\omega)_n,f^{(n)}\rangle$ and  $\langle(\omega)^{(n)},f^{(n)}\rangle$ belong to $\mathcal P(M(X))$ or $\mathcal {CP}(M(X))$, respectively.
With an abuse of terminology, we will sometimes call these polynomials {\it falling and rising factorials}, respectively. The linear span of all falling (or all  rising) factorials forms the whole space $\mathcal P(M(X))$.

The falling factorials satisfy the recurrence relation
\begin{gather}
(\omega)_0=1,\notag\\
\langle(\omega)_{n+1},\xi^{\otimes(n+1)}\rangle=\langle(\omega)_{n},\xi^{\otimes n}\rangle\langle\omega,\xi\rangle-n\langle(\omega)_n,\xi^2\odot\xi^{\otimes(n-1)}\rangle,\quad\xi\in\mathcal F(X).
\label{fseraw4qa}
\end{gather}
From here a similar recurrence relation for the rising factorials follows.

The following generating functions uniquely characterize the falling and rising factorials:
\begin{align}\sum_{n=0}^\infty\frac1{n!}\,\langle(\omega)_n,\xi^{\otimes n}\rangle&=\exp\big[\langle\omega,\log(1+\xi)\rangle\big],\label{dw5w}\\
\sum_{n=0}^\infty\frac1{n!}\,\langle(\omega)^{(n)},\xi^{\otimes n}\rangle&=\exp\big[\langle\omega,-\log(1-\xi)\rangle\big],\quad\xi\in \mathcal F(X).\label{esa4q42}\end{align}
Both formulas \eqref{dw5w} and \eqref{esa4q42} are understood as equalities of formal power series, see Subsection~2.2 and Appendix in \cite{FKLO}. To obtain the  falling and rising factorials, one has to set in  formulas \eqref{dw5w}, \eqref{esa4q42} $\xi=z\psi$, where $\psi\in \mathcal F(X)$ and $z\in\mathbb F$, formally expand  in powers of $z$, and equate the coefficients by each $z^n$. 

For vector spaces $V$ and $W$, we denote by $\mathcal L(V,W)$ the space of linear operators from $V$ into $W$. We also denote   $\mathcal L(V):=\mathcal L(V,V)$.
For each $x\in X$, we define $\partial_x,D_x\in \mathcal L(\mathcal P(M(X)))$ by
\begin{align}
\partial_xp(\omega)&=\lim_{z\to 0}\frac{p(\omega+z\delta_x)-p(\omega)}z,\label{qdftyfe}\\
D_xp(\omega)&=p(\omega+\delta_x)-p(\omega)\label{r3q3q2sqg}
\end{align}
for each $p\in \mathcal P(M(X))$.  (The right-hand side of \eqref{qdftyfe} is just the derivative of $p$ in direction $\delta_x$.)
In particular, for each $f^{(n)}\in \mathcal F^{(n)}(X)$,
\begin{equation}\label{ydsrqqsqwdx}
\partial_x\langle\omega^{\otimes n},f^{(n)}\rangle=n\langle\omega^{\otimes(n-1)},f^{(n)}(x,\cdot)\rangle,\end{equation}
and by \eqref{s5wuwu},
\begin{equation}\label{rsa4q}
D_x\langle(\omega)_n,f^{(n)}\rangle=n\langle(\omega)_{n-1},f^{(n)}(x,\cdot)\rangle.
\end{equation}
It follows from \eqref{dw5w} that
\begin{equation}\label{cxrwq5w4}
 D_x=e^{\partial_x}-1=\sum_{n=1}^\infty\frac1{n!}\,\partial_x^n. \end{equation}
Note that for each $p\in\mathcal P(M(X))$ of degree $n$, we have $\partial_x^kp=0$ for all $k\ge n+1$, which justifies the use of the infinite series of powers of $\partial_x$ in \eqref{cxrwq5w4}.

\section{Stirling operators}\label{sa4t2q}

In Section \ref{rts54q43}, for $n,k\in\mathbb N$, $k\le n$, we defined the Stirling operators of the first kind, $\so(n,k)$, and the Stirling operators of the second kind, $\SO(n,k)$. Note that, using the notation~\eqref{dera4tq2y4}, we may write formulas~\eqref{cxta5wq5y43w} and~\eqref{cde5w35} as follows:
\begin{align}
\langle (\omega)_n,f^{(n)}\rangle&=\sum_{k=1}^n\langle\omega^{\otimes k},\so(n,k)f^{(n)}\rangle,\label{yrdqs}\\
\langle \omega^{\otimes n},f^{(n)}\rangle&=\sum_{k=1}^n\langle (\omega)_k,\SO(n,k)f^{(n)}\rangle.\label{fdr6ew456u4w}\end{align}
 Note that, for each polynomial $p$ on $M(X)$ of degree $n$, its representation as in formula~\eqref{tew53w5} is unique, hence formula~\eqref{yrdqs} indeed uniquely identifies the operators $\so(n,k)$.
Since $\so(n,n)$ is the identity operator, we similarly conclude that formula~\eqref{fdr6ew456u4w} indeed uniquely identifies the operators $\SO(n,k)$.

We further define the {\it unsigned Stirling operators of the first kind}, $\co(n,k)\linebreak\in\mathcal L(\mathcal F^{(n)}(X),\mathcal F^{(k)}(X))$, through the formula  
\begin{equation}
\langle(\omega)^{(n)},f^{(n)}\rangle=\sum_{k=1}^n\langle\omega^{\otimes k},\co(n,k)f^{(n)}\rangle.\label{drtrehgy}
\end{equation}

\begin{remark} By \eqref{ctsrea4w}, we obtain
\begin{equation}\label{xfdyf}
\so(n,k)=(-1)^{n-k}\co(n,k),\end{equation}
and
$$\langle\omega^{\otimes n},f^{(n)}\rangle=\sum_{k=1}^n\langle(\omega)^{(k)},(-1)^{n-k}\,\SO(n,k)f^{(n)}\rangle, \quad f^{(n)}\in\mathcal F^{(n)}(X).$$
\end{remark}

 Each space $\mathcal{CF}^{(n)}(X)$ ($n\in\mathbb N$) can be endowed with a natural locally convex topology, as a direct limit of the Banach spaces of $\mathbb F$-valued continuous symmetric functions with  supports in  compact subsets of $X^n$, equipped with the supremum norm. Note that the operators $\so(n,k)$, $\co(n,k)$, and $\SO(n,k)$ belong to $\mathcal L(\mathcal {CF}^{(n)}(X),\mathcal {CF}^{(k)}(X))$. Furthermore, it will follow from  Proposition~\ref{a4w5qwgb} below that these operators are  continuous.

The linear topological space $\mathcal{CF}^{(n)}(X)$ is Hausdorff and its dual space is  $M^{(n)}(X)$, see e.g.\ \cite[\S 29]{Bauer} or \cite[Chapter 7]{Simonnet}.
  Let $A\in\mathcal L(\mathcal {CF}^{(n)}(X),\mathcal {CF}^{(k)}(X))$ be a linear  continuous operator. Then, by e.g.\ \cite[Theorem 8.11.3]{NB}, the operator $A$ has the adjoint operator $A^*\in\mathcal L(M^{(k)}(X),M^{(n)}(X))$, satisfying $\langle \mu^{(k)},Af^{(n)}\rangle=\langle A^*\mu^{(k)},f^{(n)}\rangle$ for all $\mu^{(k)}\in M^{(k)}(X)$ and $f^{(n)}\in\mathcal {CF}^{(n)}(X)$. Formulas \eqref{yrdqs}--\eqref{drtrehgy} then imply that, for $\omega\in M(X)$, 
$$(\omega)_n=\sum_{k=1}^n\so(n,k)^*\omega^{\otimes k},\quad(\omega)^{(n)}=\sum_{k=1}^n\co(n,k)^*\omega^{\otimes k},\quad \omega^{\otimes n}=\sum_{k=1}^n\SO(n,k)^*(\omega)_k.$$

\begin{remark}\label{vyd6wq4a24}
 Let $A\in\mathcal L(\mathcal {CF}^{(n)}(X),\mathcal {CF}^{(k)}(X))$ be   a continuous linear operator, and let $A^*\in\mathcal L(M^{(k)}(X),M^{(n)}(X))$ be its adjoint. For each $\mu^{(k)}\in M^{(k)}(X)$, the measure $A^*\mu^{(k)}\in M^{(n)}(X)$ is completely identified by the integrals 
  $$\langle A^*\mu^{(k)},\xi^{\otimes n}\rangle=\langle \mu^{(k)},A\xi^{\otimes n}\rangle,$$
   where $\xi$ runs through $\mathcal {CF}(X)$. This implies that the linear operator $A$ is completely identified by $A\xi^{\otimes n}$ where $\xi$ runs through $\mathcal {CF}(X)$. (Equivalently, the linear span of the set $\{\xi^{\otimes n}\mid\xi\in\mathcal{CF}(X)\}$ is dense in $\mathcal{CF}^{(n)}(X)$.)
\end{remark}

\begin{remark}\label{rtsdqwsuqwds6qe}
For technical reasons, it is useful to define operators $\so(n,k)$, $\co(n,k)$, and $\SO(n,k)$ from $\mathcal L(\mathcal F^{(n)}(X),\mathcal F^{(k)}(X))$ to be zero 
if either $k>n\ge0$ or $k=0<n$ and to be equal to 1 if $k=n=0$. 
\end{remark}

We will now present explicit formulas for the action of the operators $\co(n,k)$ and $\SO(n,k)$. Let $k,n\in\mathbb N$, $k\le n$  and let $i_1,\dots,i_k\in\mathbb N$ be such that $i_1+\dots+i_k=n$. 
We define $\mathbb D^{(n)}_{i_1,\dots,i_k}\in\mathcal L(\mathcal F^{(n)}(X),\mathcal F^{(k)}(X))$ by
$$
(\mathbb D^{(n)}_{i_1,\dots,i_k}f^{(n)})(x_1,\dots,x_k):=P_k\big[f^{(n)}\big(\underbrace{x_1,\dots,x_1}_{\text{$i_1$ times}},\dots,\underbrace{x_k,\dots,x_k}_{\text{$i_k$ times}}\big)\big],\quad f^{(n)}\in\mathcal F^{(n)}(X). $$
Note that
\begin{equation}\label{cre4a543w6}
\mathbb D^{(n)}_{i_1,\dots,i_k}\xi^{\otimes n}=\xi^{i_1}\odot\dots\odot\xi^{i_k},\quad \xi\in\mathcal F(X).
\end{equation}
Obviously, we also have $\mathbb D^{(n)}_{i_1,\dots,i_k}\in\mathcal L(\mathcal {CF}^{(n)}(X),\mathcal {CF}^{(k)}(X))$.  Note that these operators are continuous.
Choosing $k=1$, gives the operator $\mathbb D^{(n)}:=\mathbb D^{(n)}_n$, 
\begin{equation}\label{cq5y42w}
(\mathbb D^{(n)}f^{(n)})(x)=f^{(n)}(x,\dots,x).
\end{equation}

\begin{remark}\label{se5wu563} If $X$ has only a single point, all the spaces $\mathcal F^{(n)}(X)$ can be identified with $\mathbb F$, in which case all operators $\mathbb D^{(n)}_{i_1,\dots,i_k}$ can be identified with number 1.
\end{remark}

\begin{remark}
 We will sometimes need an extension of $\mathbb D^{(n)}$ to $\mathcal F(X^n)$, which will act according to the same formula \eqref{cq5y42w}.
\end{remark}

Next, for $k,n\in\mathbb N$, $k\le n$, we denote by $\operatorname{UP}(n,k)$ the collection of all 
unordered  partitions of $\{1,\dots,n\}$ into $k$ non-empty parts.  For any $\lambda=\{\lambda_1,\dots,\lambda_k\}\in\operatorname{UP}(n,k)$, we define $\mathbb D^{(n)}_\lambda\in\mathcal L(\mathcal F(X^n),\mathcal F^{(k)}(X))$ as follows: 
let $f^{(n)}\in \mathcal F(X^n)$; in $f^{(n)}(x_1,\dots,x_n)$ replace each $x_j$ with $y_i$ where $j\in\lambda_i$;  symmetrize the obtained function of the $y_1,\dots,y_k$ variables; this function is $(\mathbb D^{(n)}_\lambda f^{(n)})(y_1,\dots,y_k)$. Note that, although in our definition of $\mathbb D^{(n)}_\lambda$ we used an order of the elements $\lambda_1,\dots,\lambda_k$ from 
$\lambda$, the obtained function does not depend on this order. 
 
\begin{remark} Let $\lambda=\{\lambda_1,\dots,\lambda_k\}\in\operatorname{UP}(n,k)$. We obviously have
\begin{equation}\label{cs5w5a}
\mathbb D^{(n)}_\lambda f^{(n)}=\mathbb D^{(n)}_{|\lambda_1|,\dots,|\lambda_k|}f^{(n)},\quad f^{(n)}\in\mathcal F^{(n)}(X).\end{equation} 
Here $|\lambda_l|$ denotes the number of elements of the set $\lambda_l$.  
On the other hand, the operator $\mathbb D^{(n)}_\lambda$ was defined for not necessarily symmetric functions, and we will use this fact below.
\end{remark}

\begin{proposition}\label{a4w5qwgb} For any $n,k\in\mathbb N$, $k\le n$, 
\begin{align}
\co(n,k)&=\frac{n!}{k!}\sum_{\substack{(i_1,\dots,i_k)\in\mathbb N^k\\i_1+\dots+i_k=n}}\frac1{i_1\dotsm i_k}\,\mathbb D^{(n)}_{i_1,\dots,i_k}\label{cx5waq4}\\
&=\sum_{\lambda=\{\lambda_1,\dots,\lambda_k\}\in \operatorname{UP}(n,k)}(|\lambda_1|-1)!\dotsm (|\lambda_k|-1)!\,\mathbb D^{(n)}_\lambda,\label{dts5ws53qq}\\
\SO(n,k)&=\frac{n!}{k!}\sum_{\substack{(i_1,\dots,i_k)\in\mathbb N^k\\i_1+\dots+i_k=n}}\frac1{i_1!\dotsm i_k!}\,\mathbb D^{(n)}_{i_1,\dots,i_k}\label{cta4waq2wed}\\
&=\sum_{\lambda\in \operatorname{UP}(n,k)}\mathbb D^{(n)}_\lambda.\label{xsa43wqa4q}
\end{align}
In particular, the operators $\so(n,k),\co(n,k),\SO(n,k)\in \mathcal L(\mathcal {CF}^{(n)}(X),\mathcal {CF}^{(k)}(X))$ are continuous.
\end{proposition}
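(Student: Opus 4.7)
\bigskip
\noindent\textbf{Proof plan.} By Remark~\ref{vyd6wq4a24}, every continuous operator in $\mathcal L(\mathcal{CF}^{(n)}(X),\mathcal{CF}^{(k)}(X))$ is determined by its action on pure tensors $\xi^{\otimes n}$ with $\xi\in\mathcal{CF}(X)$, and in view of \eqref{cre4a543w6} the operators on the right-hand sides of \eqref{cx5waq4} and \eqref{cta4waq2wed} send $\xi^{\otimes n}$ into explicit symmetric products of powers of $\xi$. Hence it suffices to establish the formulas when both sides are tested against $\xi^{\otimes n}$ and then compared via the defining relations \eqref{fdr6ew456u4w}, \eqref{drtrehgy}. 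My plan is to extract both identities from the generating functions \eqref{dw5w}, \eqref{esa4q42}, after which the passage from ordered compositions to unordered partitions (and the continuity statement) becomes routine combinatorics.

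For the unsigned Stirling operators of the first kind, I would substitute $\xi\to z\psi$ into \eqref{esa4q42} (with $\psi\in\mathcal{CF}(X)$ and $z$ a formal variable), expand the right-hand side as
\[
\exp\big[\langle\omega,-\log(1-z\psi)\rangle\big]
=\sum_{k=0}^\infty\frac1{k!}\bigg\langle\omega^{\otimes k},\bigg(\sum_{i\ge 1}\frac{(z\psi)^i}{i}\bigg)^{\otimes k}\bigg\rangle,
\]
and then expand the inner tensor power as a sum over $k$-tuples $(i_1,\dots,i_k)\in\mathbb N^k$. Using symmetry of $\omega^{\otimes k}$ to replace $\otimes$ by $\odot$ and matching the coefficient of $z^n$ on each side gives
\[
\frac1{n!}\co(n,k)\psi^{\otimes n}
=\frac1{k!}\sum_{\substack{(i_1,\dots,i_k)\\i_1+\dots+i_k=n}}\frac1{i_1\dotsm i_k}\,\psi^{i_1}\odot\dots\odot\psi^{i_k},
\]
which by \eqref{cre4a543w6} is \eqref{cx5waq4} tested on $\psi^{\otimes n}$. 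For the Stirling operators of the second kind, the trick is to invert \eqref{dw5w} formally: substitute $\xi\to e^{z\psi}-1=\sum_{i\ge1}(z\psi)^i/i!$ (a formal power series in $z$ with no constant term), use the formal identity $\log(1+(e^{z\psi}-1))=z\psi$ to collapse the right-hand side of \eqref{dw5w} to $e^{z\langle\omega,\psi\rangle}=\sum_n\frac{z^n}{n!}\langle\omega^{\otimes n},\psi^{\otimes n}\rangle$, expand $(e^{z\psi}-1)^{\otimes k}$ on the left-hand side, and match coefficients of $z^n$ exactly as above to obtain \eqref{cta4waq2wed}.

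To pass from \eqref{cx5waq4} to \eqref{dts5ws53qq} and from \eqref{cta4waq2wed} to \eqref{xsa43wqa4q}, I would count: for a composition $(i_1,\dots,i_k)$ with $i_1+\dots+i_k=n$, there are $\binom{n}{i_1,\dots,i_k}=n!/(i_1!\dotsm i_k!)$ ordered partitions $(\lambda_1,\dots,\lambda_k)$ of $\{1,\dots,n\}$ with $|\lambda_j|=i_j$, and each unordered partition $\lambda\in\operatorname{UP}(n,k)$ corresponds to exactly $k!$ ordered ones. Combined with \eqref{cs5w5a}, this converts $\frac{n!}{k!}\sum\frac1{i_1!\dotsm i_k!}\mathbb D^{(n)}_{i_1,\dots,i_k}$ into $\sum_\lambda\mathbb D^{(n)}_\lambda$ and $\frac{n!}{k!}\sum\frac1{i_1\dotsm i_k}\mathbb D^{(n)}_{i_1,\dots,i_k}$ into $\sum_\lambda(|\lambda_1|-1)!\dotsm(|\lambda_k|-1)!\,\mathbb D^{(n)}_\lambda$. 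Finally, continuity of $\so(n,k)$, $\co(n,k)$, $\SO(n,k)$ reduces to continuity of each $\mathbb D^{(n)}_{i_1,\dots,i_k}:\mathcal{CF}^{(n)}(X)\to\mathcal{CF}^{(k)}(X)$: a function $f^{(n)}$ supported in $K^n$ is sent to a function supported in $K^k$, with $\|\mathbb D^{(n)}_{i_1,\dots,i_k}f^{(n)}\|_\infty\le\|f^{(n)}\|_\infty$, so this operator is bounded on each step of the inductive limit defining the topology, hence continuous, and finite sums of continuous operators stay continuous. The main obstacle I anticipate is bookkeeping the formal power series substitution $\xi\to e^{z\psi}-1$ and the symmetrizations in the tensor expansions; everything else is combinatorial counting.
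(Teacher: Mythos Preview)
Your approach is essentially the same as the paper's: both derive \eqref{cx5waq4} by expanding the generating function \eqref{esa4q42} in powers of $\xi$ and reading off the coefficient of the degree-$n$ term, and both obtain \eqref{cta4waq2wed} by substituting $\xi\to e^\xi-1$ in \eqref{dw5w} so that the right-hand side collapses to $e^{\langle\omega,\xi\rangle}$, then expanding the left-hand side analogously. Your passage from compositions to unordered partitions via \eqref{cs5w5a} and the multinomial count, and your continuity argument via boundedness of each $\mathbb D^{(n)}_{i_1,\dots,i_k}$ on the inductive-limit steps, are exactly what the paper does (with slightly more detail on your side).
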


\begin{proof}
It easily follows from  \eqref{esa4q42} that
\begin{align*}
\langle(\omega)^{(n)},\xi^{\otimes n}\rangle&=
\sum_{k=1}^n \frac{n!}{k!}\sum_{\substack{(i_1,\dots,i_k)\in\mathbb N^k\\i_1+\dots+i_k=n}}\frac1{i_1\dotsm i_k}\,\langle\omega,\xi^{i_1}\rangle\dotsm \langle\omega,\xi^{i_k}\rangle
\\
&=\sum_{k=1}^n \frac{n!}{k!}\sum_{\substack{(i_1,\dots,i_k)\in\mathbb N^k\\i_1+\dots+i_k=n}}\frac1{i_1\dotsm i_k}\,\langle\omega^{\otimes k},\xi^{i_1}\odot\dots\odot\xi^{i_k}\rangle.\end{align*}
This and \eqref{cre4a543w6} imply \eqref{cx5waq4}.  Next, formula \eqref{dw5w} implies
$$\sum_{n=0}^\infty\frac1{n!}\,\langle(\omega)_{n},(e^{\xi}-1)^{\otimes n}\rangle=\exp\big[\langle\omega,\xi\rangle\big],\quad\xi\in \mathcal F(X).$$
From here, analogously to the proof of \eqref{cx5waq4},  we derive  \eqref{cta4waq2wed}.
By  \eqref{cs5w5a}, \eqref{cx5waq4}, and \eqref{cta4waq2wed}, formulas \eqref{dts5ws53qq} and \eqref{xsa43wqa4q} easily follow. Finally, the statement about the continuity of the Stirling operators follows  from the continuity of $\mathbb D^{(n)}_{i_1,\dots,i_k}\in\mathcal L(\mathcal {CF}^{(n)}(X),\mathcal {CF}^{(k)}(X))$ and formulas  \eqref{xfdyf}, \eqref{cx5waq4}, and \eqref{cta4waq2wed}.
\end{proof}

\begin{remark}\label{tewu6534e} For $n,k\in\mathbb N$, $n\le k$, we denote by $\mathfrak S(n,k)$ the collection of all permutations $\pi\in\mathfrak S_n$ that have $k$ cycles. Each $\pi \in \mathfrak S(n,k)$ determines a partition $\lambda\in\operatorname{UP}(n,k)$ through the cycles of $\pi$. Furthermore, for each $\lambda=\{\lambda_1,\dots,\lambda_k\}\in \operatorname{UP}(n,k)$ there are $(|\lambda_1|-1)!\dotsm (|\lambda_k|-1)!$ permutations $\pi\in\mathfrak S(n,k)$ that give rise to $\lambda$. Thus, denoting $\mathbb D^{(n)}_\pi:=\mathbb D^{(n)}_\lambda$, we can rewrite formula \eqref{dts5ws53qq} as follows:
\begin{equation}\label{fstsdys}
\co(n,k)=\sum_{\pi\in\mathfrak S(n,k)}\mathbb D^{(n)}_\pi.\end{equation}
\end{remark}

\begin{remark}
For $A\in\mathcal B_0(X)$, let $\chi_{A^n}$ denote the indicator function of the set $A^n$. Then, by \eqref{xfdyf}, \eqref{xsa43wqa4q}, and \eqref{fstsdys},
\begin{equation}\label{tsa4waAA}
\co(n,k)\chi_{A^n}=c(n,k)\chi_{A^k}\,,\quad \so(n,k)\chi_{A^n}=s(n,k)\chi_{A^k}\,,\quad \SO(n,k)\chi_{A^n}=S(n,k)\chi_{A^k}\,.\end{equation}
Here $c(n,k)$, $s(n,k)$, and $S(n,k)$ denote the classical unsigned and signed Stirling numbers of the first kind and the classical Stirling numbers of the second kind, respectively. (Recall that $c(n,k)$ is equal to the number of permutations in $\mathfrak S(n,k)$, and $S(n,k)$ is equal to the number of partitions in $\operatorname{UP}(n,k)$.) If $X$ has only a single point, then by \eqref{tsa4waAA} and Remark~\ref{se5wu563}, the linear operators $\co(n,k)$, $\so(n,k)$, $\SO(n,k)$ can be identified with the  numbers $c(n,k)$, $s(n,k)$, $S(n,k)$, respectively.
\end{remark}

\begin{remark} Let $\mathbb P(n,k)$ denote the collection of all $(r_1,\dots,r_{n-k+1})\in\mathbb N_0^{n-k+1}$ such that $r_1+r_2+\dots+r_{n-k+1}=k$ and $r_1+2r_2+\dots+(n-k+1)r_{n-k+1}=n$. For each $(r_1,r_2,\dots,r_{n-k+1})\in\mathbb P(n,k)$, we denote
$$\alpha(r_1,\dots,r_{n-k+1}):=\frac{n!}{r_1!\,r_2!\dotsm r_{n-k+1}!\,(1!)^{r_1}(2!)^{r_2}\dotsm((n-k+1)!)^{r_{n-k+1}}},$$
which is the number of partitions from $\operatorname{UP}(n,k)$ that contains $r_1$ parts that have exactly  one element, $r_2$ parts that contain exactly two elements, and so on. By \eqref{xsa43wqa4q},
\begin{multline*}
\SO(n,k)=\sum_{(r_1,\dots,r_{n-k+1})\in\mathbb P(n,k)}\alpha(r_1,\dots,r_{n-k+1})\\
\times P_k\big((\mathbb D^{(1)})^{\otimes r_1}\otimes (\mathbb D^{(2)})^{\otimes r_2}\otimes\dots\otimes (\mathbb D^{(n-k+1)})^{\otimes r_{n-k+1}}\big).\end{multline*}
This formula resembles the definition of the {\it (partial) exponential Bell polynomials} \cite{Bell}. The latter multivariate polynomials  are defined by
\begin{equation}\label{rw5w2u53}
B_{n,k}(z_1,\dots,z_{n-k+1})=\sum_{(r_1,\dots,r_{n-k+1})\in\mathbb P(n,k)}\alpha(r_1,\dots,r_{n-k+1})\,z_1^{r_1}z_2^{r_2}\dotsm z_{n-k+1}^{r_{n-k+1}}\end{equation}
for $(z_1,\dots,z_{n-k+1})\in\mathbb F^{n-k+1}$. In \cite{Schreiber}, these polynomials were called the {\it multivariate Stirling polynomials of the second kind}. Furthermore, by \eqref{dts5ws53qq}, we obtain
\begin{multline*}
\so(n,k)=\sum_{(r_1,\dots,r_{n-k+1})\in\mathbb P(n,k)}\beta(r_1,\dots,r_{n-k+1})\\
\times P_k\big((\mathbb D^{(1)})^{\otimes r_1}\otimes (\mathbb D^{(2)})^{\otimes r_2}\otimes\dots\otimes (\mathbb D^{(n-k+1)})^{\otimes r_{n-k+1}}\big),\end{multline*}
where
$$\beta(r_1,\dots,r_{n-k+1}):=\frac{n!}{r_1!\,r_2!\dotsm r_{n-k+1}!\,1^{r_1}\,2^{r_2}\dotsm (n-k+1)^{r_{n-k+1}}}.$$
Note, however, that if we defined a new sequence of multivariate polynomials by replacing in formula~\eqref{rw5w2u53}, the $\alpha$ coefficients with the $\beta$ coefficients, then these would differ from the multivariate Stirling polynomials of the first kind as defined in \cite{Schreiber}. 
\end{remark}

We will now discuss recurrence relations satisfied by the Stirling operators. For linear operators $L_i\in\mathcal L(\mathcal F(X^{n_i}),\mathcal F(X^{k_i}))$,
 $i=1,2$, we can define their tensor product $L_1\otimes L_2\in\mathcal L(\mathcal F(X^{n_1+n_2}),\mathcal F(X^{k_1+k_2}))$. 

\begin{proposition}\label{cdrttes} Let $\mathbf 1^{(n)}$ denote the identity operator on $\mathcal F(X^n)$. We have:
\begin{gather}
\SO(n,n)=\so(n,n)=\mathbf 1^{(n)},\label{rses3waw2}\\
\SO(n+1,k)=P_k\big(\mathbf 1^{(1)}\otimes\SO(n,k-1)\big)+kP_k\big((\mathbb D^{(2)}\otimes \mathbf 1^{(k-1)})(\mathbf 1^{(1)}\otimes\SO(n,k))\big), \label{crtw5w}\\
\so(n+1,k)=P_k\big(\mathbf 1^{(1)}\otimes\so(n,k-1)\big)-n\so(n,k)P_n\big(\mathbb D^{(2)}\otimes\mathbf 1^{(n-1)}\big).\label{dw4w34}
\end{gather}
for $n\in\mathbb N$ and $k=1,\dots, n$.
\end{proposition}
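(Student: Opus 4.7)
The plan is first to handle \eqref{rses3waw2} as an immediate consequence of inspecting leading orders, then to verify \eqref{dw4w34} and \eqref{crtw5w} by testing both sides on $\xi^{\otimes(n+1)}$ for $\xi\in\mathcal{CF}(X)$. Indeed, the definition \eqref{xesa332a} gives $(\omega)_n=\omega^{\otimes n}+(\text{terms of lower total degree in }\omega)$, so the uniqueness of the representations \eqref{yrdqs} and \eqref{fdr6ew456u4w} forces $\so(n,n)=\SO(n,n)=\mathbf{1}^{(n)}$. For the recurrences, Remark~\ref{vyd6wq4a24} combined with the continuity of the Stirling operators established in Proposition~\ref{a4w5qwgb} reduces the claim to a verification of both sides after application to $\xi^{\otimes(n+1)}$.

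For \eqref{dw4w34}, I will substitute the defining formula \eqref{yrdqs} into the classical recurrence \eqref{fseraw4qa} to get
\[
\langle(\omega)_{n+1},\xi^{\otimes(n+1)}\rangle = \sum_{k=1}^{n}\langle\omega^{\otimes k},\so(n,k)\xi^{\otimes n}\rangle\langle\omega,\xi\rangle - n\sum_{k=1}^{n}\langle\omega^{\otimes k},\so(n,k)(\xi^2\odot\xi^{\otimes(n-1)})\rangle.
\]
The identities $\langle\omega^{\otimes k},h\rangle\langle\omega,\xi\rangle=\langle\omega^{\otimes(k+1)},\xi\odot h\rangle$ (which reindexes the first sum by $j=k+1$) and $\xi^2\odot\xi^{\otimes(n-1)}=P_n(\mathbb{D}^{(2)}\otimes\mathbf{1}^{(n-1)})\xi^{\otimes(n+1)}$ then recast the right-hand side as $\sum_{k=1}^{n+1}\langle\omega^{\otimes k},\cdot\rangle$; comparing with the expansion \eqref{yrdqs} of the left-hand side and matching coefficients of $\langle\omega^{\otimes k},\cdot\rangle$ yields exactly the action of the operator on the right of \eqref{dw4w34} on $\xi^{\otimes(n+1)}$, the boundary indices $k=1$ and $k=n+1$ being absorbed by the conventions $\so(n,0)=\so(n,n+1)=0$ of Remark~\ref{rtsdqwsuqwds6qe}.

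For \eqref{crtw5w}, the strategy is to start from $\langle\omega^{\otimes(n+1)},\xi^{\otimes(n+1)}\rangle=\langle\omega,\xi\rangle\langle\omega^{\otimes n},\xi^{\otimes n}\rangle$ and to expand the second factor via \eqref{fdr6ew456u4w}. The crucial preliminary step will be to establish, directly from the definition \eqref{xesa332a}, the generalised recurrence
\[
\langle\omega,\xi\rangle\langle(\omega)_k,g^{(k)}\rangle = \langle(\omega)_{k+1},\xi\odot g^{(k)}\rangle + k\langle(\omega)_k,(\mathbb{D}^{(2)}\otimes\mathbf{1}^{(k-1)})(\xi\otimes g^{(k)})\rangle,
\]
valid for every $\xi\in\mathcal{F}(X)$ and symmetric $g^{(k)}\in\mathcal{F}^{(k)}(X)$. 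Its proof will mirror that of \eqref{fseraw4qa}: test the defining product of $(\omega)_{k+1}$ against $\xi(x_{k+1})g^{(k)}(x_1,\dots,x_k)$, and use the joint symmetry of $(\omega)_k$ and $g^{(k)}$ to merge the $k$ diagonal correction terms appearing in \eqref{xesa332a} into $k$ copies of a single $\mathbb{D}^{(2)}$-contraction. I will then apply this identity with $g^{(k)}=\SO(n,k)\xi^{\otimes n}$, rewrite $\xi\otimes\SO(n,k)\xi^{\otimes n}=(\mathbf{1}^{(1)}\otimes\SO(n,k))\xi^{\otimes(n+1)}$, reindex the first sum by $j=k+1$, and match coefficients of $\langle(\omega)_k,\cdot\rangle$ in the expansion \eqref{fdr6ew456u4w} of $\langle\omega^{\otimes(n+1)},\xi^{\otimes(n+1)}\rangle$; this reproduces \eqref{crtw5w} evaluated on $\xi^{\otimes(n+1)}$. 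The outer projections $P_k$ appearing on the right of \eqref{crtw5w} are invisible under pairing with the symmetric measure $(\omega)_k$ and are only needed to place the operator output in $\mathcal{CF}^{(k)}(X)$.

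The principal technical obstacle is the derivation and correct bookkeeping of the generalised recurrence driving \eqref{crtw5w}: one has to carefully exploit the symmetry of $(\omega)_k$ in order to consolidate the $k$-fold sum of diagonal $\delta_{x_i}$ contractions in \eqref{xesa332a} into a single $\mathbb{D}^{(2)}\otimes\mathbf{1}^{(k-1)}$ contraction, and then recognise the resulting composition $(\mathbb{D}^{(2)}\otimes\mathbf{1}^{(k-1)})(\mathbf{1}^{(1)}\otimes\SO(n,k))$ as precisely the operator appearing on the right of \eqref{crtw5w}.
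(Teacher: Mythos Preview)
Your proposal is correct. For \eqref{rses3waw2} and \eqref{dw4w34} you do exactly what the paper does: the first is read off from leading orders, and the second is obtained by inserting \eqref{yrdqs} into the recurrence \eqref{fseraw4qa} and matching coefficients of $\langle\omega^{\otimes k},\cdot\rangle$.

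For \eqref{crtw5w}, however, your route differs from the paper's. The paper argues combinatorially: it rewrites $\SO(n,k)=\frac{1}{k!}\sum_{\lambda\in\operatorname{OP}(n,k)}\mathbb D^{(n)}_\lambda$ using ordered partitions, and then splits the ordered partitions of $\{1,\dots,n+1\}$ into those in which $\{1\}$ is a singleton block (yielding the $P_k(\mathbf 1^{(1)}\otimes\SO(n,k-1))$ term) and those in which $1$ lies in a larger block (yielding the $kP_k((\mathbb D^{(2)}\otimes\mathbf 1^{(k-1)})(\mathbf 1^{(1)}\otimes\SO(n,k)))$ term). Your argument is instead analytic and parallel to the proof of \eqref{dw4w34}: you establish the generalised recurrence $\langle\omega,\xi\rangle\langle(\omega)_k,g^{(k)}\rangle=\langle(\omega)_{k+1},\xi\odot g^{(k)}\rangle+k\langle(\omega)_k,(\mathbb D^{(2)}\otimes\mathbf 1^{(k-1)})(\xi\otimes g^{(k)})\rangle$ directly from \eqref{xesa332a}, apply it with $g^{(k)}=\SO(n,k)\xi^{\otimes n}$, and compare coefficients of $\langle(\omega)_k,\cdot\rangle$ in the falling-factorial expansion of $\langle\omega^{\otimes(n+1)},\xi^{\otimes(n+1)}\rangle$. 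Your approach has the pleasant feature of treating both recurrences by the same mechanism and of not invoking the explicit partition formula \eqref{xsa43wqa4q}; the paper's approach, by contrast, makes the combinatorial meaning of the two summands in \eqref{crtw5w} transparent.
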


\begin{remark}
Formulas \eqref{xfdyf}, \eqref{rses3waw2}, and \eqref{dw4w34} immediately imply a recurrence relation for $\co(n,k)$.
\end{remark}

\begin{remark} In the case of a single-point underlying space $X$, formulas  \eqref{crtw5w}, \eqref{dw4w34} become  the classical recurrence formulas $S(n+1,k)=S(n,k-1)+kS(n,k)$ and $s(n+1,k)=s(n,k-1)-ns(n,k)$.
\end{remark}

\begin{proof}[Proof of Proposition \ref{cdrttes}] We denote by $\operatorname{OP}(n,k)$ the colection of all ordered partitions  of $\{1,\dots,n\}$ into $k$ non-empty parts. For each $\lambda=(\lambda_1,\dots,\lambda_k)\in \operatorname{OP}(n,k)$, we define the operator $\mathbb D^{(n)}_\lambda\in\mathcal L(\mathcal F(X^n),\mathcal F(X^k))$ analogously to the case when $\lambda$ was an unordered partition but without the symmetrization of the obtained function in the end. By \eqref{xsa43wqa4q},
$\SO(n,k)=\frac1{k!}\sum_{\lambda\in \operatorname{OP}(n,k)}\mathbb D^{(n)}_\lambda$. Note also that if a function $g^{(k)}:X^k\to\R$ is symmetric in its $x_2,\dots,x_k$ variables, then
$$(P_kg^{(k)})(x_1,\dots,x_k)=\frac1k\sum_{i=1}^k g^{(k)}(x_2,\dots,x_{i-1},x_1, x_i,\dots,x_k).$$
Therefore,
\begin{align*}
&P_k\big(\mathbf 1^{(1)}\otimes\SO(n,k-1)\big)=\frac1{k!}\sum_{\lambda=(\lambda_1,\dots,\lambda_k)\in\operatorname{OP}(n+1,k):\  \{1\}=\lambda_i\text{ for some }i}\mathbb D^{(n)}_\lambda,\\
&kP_k\big((\mathbb D^{(2)}\otimes \mathbf 1^{(k-1)})(\mathbf 1^{(1)}\otimes\SO(n,k))\big)=\frac1{k!}\sum_{\lambda=(\lambda_1,\dots,\lambda_k)\in\operatorname{OP}(n+1,k):\  \lambda_i\ne\{1\}\text{ for all }i}\mathbb D^{(n)}_\lambda,
\end{align*}
which implies \eqref{crtw5w}. While formula \eqref{dw4w34} could be derived from \eqref{xfdyf} and \eqref{fstsdys}, it is in fact a direct consequence of the recurrence formula \eqref{fseraw4qa}.
\end{proof}

\begin{remark}\label{ts5as2}
 %({\it The reader who is interested only in the combinatorial and algebraic aspects of our work can skip this remark.}) 
   Let $A\in\mathcal L(\mathcal {CF}^{(n)}(X),\mathcal {CF}^{(k)}(X))$ be   a continuous linear operator, and let $A^*\in\mathcal L(M^{(k)}(X),M^{(n)}(X))$ be its adjoint. Each $f^{(n)}\in\mathcal F^{(n)}(X)$ determines a linear functional 
\begin{equation}\label{tew53wq}
M^{(k)}(X)\ni\mu^{(k)}\mapsto \langle A^*\mu^{(k)},f^{(n)}\rangle.\end{equation}
 Assume that, for each $f^{(n)}\in\mathcal F^{(n)}(X)$, there exists a function $g^{(k)}\in\mathcal F^{(k)}(X)$ that allows to represent the functional 
\eqref{tew53wq} as $\langle\mu^{(k)},g^{(k)}\rangle$. (The function $g^{(k)}$ is then obviously unique.)
Then, we can extend $A$ to  the linear operator from  $\mathcal L(\mathcal F^{(n)}(X),\mathcal F^{(k)}(X))$ defined by $Af^{(n)}:=g^{(k)}$. We may call it the {\it canonical extension} of $A$. In particular, in view of Remark~\ref{vyd6wq4a24}, the canonical extension is completely identified by $A\xi^{\otimes n}$ where $\xi$ runs through $\mathcal {CF}(X)$, or through the larger set $\mathcal {F}(X)$. Finally, we note that canonical extensions of the Stirling operators $\so(n,k),\SO(n,k)\in \mathcal L(\mathcal {CF}^{(n)}(X),\mathcal {CF}^{(k)}(X))$  exist and are equal to the Stirling operators  $\so(n,k)$ and $\SO(n,k)$ from $\mathcal L(\mathcal {F}^{(n)}(X),\mathcal {F}^{(k)}(X))$. \end{remark}

The following proposition gives the explicit form of the  generating functions of the Stirling operators of the first and second kind. In view of Remark~\ref{ts5as2}, these generating functions uniquely characterize the Stirling operators. 

\begin{proposition}\label{cesseas} We have, for each $k\in\mathbb N$,
\begin{align}
\sum_{n=k}^\infty\frac1{n!}\,\SO(n,k)\xi^{\otimes n}&=\frac1{k!}\,(e^\xi-1)^{\otimes k},\label{xtsaer5aw5}\\
\sum_{n=k}^\infty\frac1{n!}\,\so(n,k)\xi^{\otimes n}&=\frac1{k!}\,(\log(1+\xi))^{\otimes k},\quad\xi\in\mathcal F(X).\label{dre6ew6ww}
\end{align}
\end{proposition}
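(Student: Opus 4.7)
The plan is to derive both generating functions directly from the defining identities \eqref{yrdqs}--\eqref{fdr6ew456u4w} together with the known generating function \eqref{dw5w} for the falling factorials, understood as equalities of formal power series after the substitution $\xi\mapsto z\xi$ (in accordance with the convention explained after \eqref{esa4q42}).

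For \eqref{xtsaer5aw5}, I would first specialize the defining identity \eqref{fdr6ew456u4w} to $f^{(n)}=\xi^{\otimes n}$, obtaining $\langle\omega,\xi\rangle^n = \sum_{k=1}^n \langle (\omega)_k, \SO(n,k)\xi^{\otimes n}\rangle$. Dividing by $n!$, summing over $n\ge 0$ (using the conventions of Remark~\ref{rtsdqwsuqwds6qe} to incorporate the $n=k=0$ term), and swapping the order of the double summation on the right produces
$$ e^{\langle\omega,\xi\rangle} = \sum_{k=0}^\infty \left\langle (\omega)_k,\, \sum_{n=k}^\infty \frac{1}{n!}\,\SO(n,k)\xi^{\otimes n}\right\rangle. $$
On the other hand, substituting $\xi\mapsto e^\xi - 1$ in \eqref{dw5w} and using $\log(1+(e^\xi-1))=\xi$ gives the alternative expansion $e^{\langle\omega,\xi\rangle} = \sum_{k=0}^\infty \frac{1}{k!}\,\langle (\omega)_k, (e^\xi-1)^{\otimes k}\rangle$. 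Equating the two expressions and reading off the coefficient of $(\omega)_k$ yields \eqref{xtsaer5aw5}.

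For \eqref{dre6ew6ww} the argument is dual. Specializing \eqref{yrdqs} to $f^{(n)}=\xi^{\otimes n}$, dividing by $n!$, summing over $n\ge 0$, and swapping the two sums gives
$$ \sum_{n=0}^\infty \frac{1}{n!}\,\langle (\omega)_n, \xi^{\otimes n}\rangle = \sum_{k=0}^\infty \left\langle \omega^{\otimes k},\, \sum_{n=k}^\infty \frac{1}{n!}\,\so(n,k)\xi^{\otimes n}\right\rangle. $$
By \eqref{dw5w} the left-hand side equals $e^{\langle\omega,\log(1+\xi)\rangle} = \sum_{k=0}^\infty \frac{1}{k!}\,\langle \omega^{\otimes k}, (\log(1+\xi))^{\otimes k}\rangle$, and matching coefficients of $\omega^{\otimes k}$ yields \eqref{dre6ew6ww}.

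The main obstacle is justifying the coefficient-matching step. For $\omega^{\otimes k}$ this is straightforward from the uniqueness of the monomial representation \eqref{tew53w5} combined with the polarization identity, which reconstructs a symmetric function on $X^k$ from the values $\langle \omega^{\otimes k},\cdot\rangle$ as $\omega$ ranges over $M(X)$. The matching for $(\omega)_k$ then reduces to the same statement, since the transition between $\{\omega^{\otimes k}\}_{k\ge 0}$ and $\{(\omega)_k\}_{k\ge 0}$ is triangular with identity on the diagonal (it is implemented by the Stirling operators themselves) and hence invertible. The only remaining care is to perform the identification at each order of $z$, which is automatic once the double sums are viewed as formal power series in $z$.
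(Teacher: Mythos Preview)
Your argument is correct, and it takes a somewhat different route from the paper's. The paper first establishes the explicit combinatorial formula~\eqref{cta4waq2wed} for $\SO(n,k)$ (in Proposition~\ref{a4w5qwgb}) and then simply expands $\frac{1}{k!}(e^\xi-1)^{\otimes k}$ term by term, recognizing each homogeneous piece as $\frac{1}{n!}\,\SO(n,k)\xi^{\otimes n}$; the proof of~\eqref{dre6ew6ww} is analogous via~\eqref{cx5waq4}. Your proof instead bypasses the explicit formula entirely, working directly from the defining relations~\eqref{yrdqs}--\eqref{fdr6ew456u4w} and the generating function~\eqref{dw5w}. Both approaches ultimately rest on the same substitution $\xi\mapsto e^\xi-1$ in~\eqref{dw5w} (the paper uses it in the proof of Proposition~\ref{a4w5qwgb}, you use it here), so the underlying analytic content is identical. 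What your route buys is independence from Proposition~\ref{a4w5qwgb}: you could in fact reverse the logical order and derive the explicit formulas~\eqref{cx5waq4}, \eqref{cta4waq2wed} from the generating functions. What the paper's route buys is that the coefficient-matching step becomes trivial (it is a direct computation rather than an appeal to uniqueness of the falling-factorial expansion), and one sees the combinatorial structure of $\SO(n,k)$ along the way.
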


\begin{remark} In the case of a single-point underlying space $X$, formula \eqref{xtsaer5aw5} becomes 
the classical formula for the generating function of the Stirling numbers of the second kind: $\sum_{n=k}^\infty\frac{\xi^n}{n!}\,S(n,k)=\frac1{k!}(e^\xi-1)^k$ ($\xi\in\mathbb F$). The respective interpretation of  \eqref{dre6ew6ww} is similar.
\end{remark}

\begin{remark}
Formulas \eqref{xfdyf} and \eqref{dre6ew6ww} yield the generating function of the operators $\co(n,k)$, which is equal to $\frac1{k!}\,(-\log(1-\xi))^{\otimes k}$.
\end{remark}

\begin{proof}[Proof of Proposition \ref{cesseas}] We have
$$\frac1{k!}\,(e^\xi-1)^{\otimes k}=\frac1{k!}\,\bigg(\sum_{i=1}^\infty\frac{\xi^i}{i!}\bigg)^{\otimes k}=\frac1{k!}\sum_{n=k}^\infty \sum_{\substack{(i_1,\dots,i_k)\in\mathbb N^k\\i_1+\dots+i_k=n}}\frac1{i_1!\dotsm i_k!}\,\xi^{i_1}\odot\dots\odot\xi^{i_k}.$$
Hence, \eqref{xtsaer5aw5} follows from \eqref{cta4waq2wed}. The proof of \eqref{dre6ew6ww} is analogous.
\end{proof}

\begin{corollary} For $\omega\in M(X)$ and $\xi\in\mathcal F(X)$,
\begin{align}
\langle\omega^{\otimes k},\SO(n,k)\xi^{\otimes n}\rangle&=\frac1{k!}\,\frac{d^n}{dz^n}\Big|_{z=0}\langle\omega, e^{z\xi}-1\rangle^n,\label{vctrs5e}\\\langle\omega^{\otimes k},\so(n,k)\xi^{\otimes n}\rangle&=\frac1{k!}\,\frac{d^n}{dz^n}\Big|_{z=0}\langle\omega, \log(1+z\xi)\rangle^n. \label{dxerts}\end{align}
\end{corollary}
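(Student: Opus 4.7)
The plan is to derive both identities directly from the generating-function formulas of Proposition~\ref{cesseas} by substituting $\xi\mapsto z\xi$, pairing with $\omega^{\otimes k}$, and extracting the $z^n$-coefficient through $\frac{1}{n!}\frac{d^n}{dz^n}\big|_{z=0}$.

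Concretely, for \eqref{vctrs5e} I would start from \eqref{xtsaer5aw5} and replace $\xi$ by $z\xi$. Because $\SO(n,k)\xi^{\otimes n}$ is homogeneous of degree $n$ in $\xi$, the left-hand side becomes
$$\sum_{n=k}^\infty \frac{z^n}{n!}\,\SO(n,k)\xi^{\otimes n},$$
while the right-hand side becomes $\tfrac{1}{k!}(e^{z\xi}-1)^{\otimes k}$. Next, I would pair both sides with $\omega^{\otimes k}\in M^{(k)}(X)$ and use the elementary identity $\langle\omega^{\otimes k},g^{\otimes k}\rangle=\langle\omega,g\rangle^{k}$, valid for any $g\in\mathcal F(X)$, applied with $g=e^{z\xi}-1$; this collapses the right-hand side to $\tfrac{1}{k!}\langle\omega,e^{z\xi}-1\rangle^{k}$. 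Applying $\frac{d^n}{dz^n}\big|_{z=0}$ to both sides then isolates a single $z^n$-coefficient on the left, producing the stated formula.

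The second identity \eqref{dxerts} follows by the identical argument from \eqref{dre6ew6ww}, with $e^\xi-1$ replaced by $\log(1+\xi)$; the only change is cosmetic. The two legitimising points are that Proposition~\ref{cesseas} is to be read in the formal-power-series sense introduced in the paper immediately after \eqref{esa4q42}, and that the $\mathbb F$-linear pairing with $\omega^{\otimes k}$ commutes with extraction of the coefficient of $z^n$, which is immediate since the pairing acts coefficient-wise. I do not anticipate any substantive obstacle: the corollary is essentially a repackaging of Proposition~\ref{cesseas} in terms of derivatives of an ordinary scalar generating function.
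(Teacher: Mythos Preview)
Your proposal is correct and follows essentially the same route as the paper: substitute $z\xi$ into the generating-function identities \eqref{xtsaer5aw5} and \eqref{dre6ew6ww} of Proposition~\ref{cesseas}, pair with $\omega^{\otimes k}$ using $\langle\omega^{\otimes k},g^{\otimes k}\rangle=\langle\omega,g\rangle^k$, and read off the coefficient of $z^n$ as an $n$th derivative at $z=0$. The paper's proof is in fact slightly terser but structurally identical, including the remark that the equality is one of formal power series in $z$.
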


\begin{proof} By \eqref{xtsaer5aw5}, for $\omega\in M(X)$, $\xi\in\mathcal F(X)$, and $z\in\mathbb F$,
$$\sum_{n=k}^\infty\frac{z^n}{n!}\,\langle\omega^{\otimes k},\SO(n,k)\xi^{\otimes n}\rangle=\langle\omega^{\otimes k},\frac1{k!}\,(e^{z\xi}-1)^{\otimes k}\rangle=\frac1{k!}\langle\omega, e^{z\xi}-1\rangle^k,$$
which is an equality of formal power series in $z$. This immediately implies \eqref{vctrs5e}. The proof of \eqref{dxerts} is analogous. 
\end{proof}

The following proposition shows that the classical orthogonality identities for the Stirling numbers (e.g.\ \cite[Section~12.2]{QG}) admit a generalization to our setting.

\begin{proposition}\label{fye6e}
For any $i,n\in\mathbb N$,
\begin{equation}\label{yr6e3}
\sum_{k=1}^n\so(k,i)\SO(n,k)=\sum_{k=1}^n\SO(k,i)\so(n,k)=\delta_{ni}\mathbf 1^{(i)},
\end{equation}
where $\delta_{ni}$ is the Kronecker symbol.
\end{proposition}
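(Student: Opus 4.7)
The plan is to derive both identities in one go by composing the two defining relations \eqref{yrdqs} and \eqref{fdr6ew456u4w} and then invoking the uniqueness of polynomial representations in $\mathcal P(M(X))$.

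First I would pick an arbitrary $f^{(n)}\in\mathcal F^{(n)}(X)$ and, for any $\omega\in M(X)$, substitute \eqref{yrdqs} into the right hand side of \eqref{fdr6ew456u4w}. Applied to the monomial $\langle\omega^{\otimes n},f^{(n)}\rangle$, this gives
\begin{align*}
\langle\omega^{\otimes n},f^{(n)}\rangle
&=\sum_{k=1}^n\langle (\omega)_k,\SO(n,k)f^{(n)}\rangle
=\sum_{k=1}^n\sum_{i=1}^k\langle\omega^{\otimes i},\so(k,i)\SO(n,k)f^{(n)}\rangle\\
&=\sum_{i=1}^n\Big\langle\omega^{\otimes i},\Big(\sum_{k=i}^n\so(k,i)\SO(n,k)\Big)f^{(n)}\Big\rangle,
\end{align*}
where I used that $\so(k,i)=0$ for $i>k$ (Remark \ref{rtsdqwsuqwds6qe}) to trim the inner sum. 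The analogous computation, starting from $\langle(\omega)_n,f^{(n)}\rangle$ and substituting \eqref{fdr6ew456u4w} into \eqref{yrdqs}, produces
\begin{equation*}
\langle(\omega)_n,f^{(n)}\rangle=\sum_{i=1}^n\Big\langle(\omega)_i,\Big(\sum_{k=i}^n\SO(k,i)\so(n,k)\Big)f^{(n)}\Big\rangle.
\end{equation*}

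Next I would invoke uniqueness. For the first identity, recall (see the paragraph containing \eqref{tew53w5}) that each polynomial on $M(X)$ of degree $n$ has a unique representation $p(\omega)=\sum_{i=0}^n\langle\omega^{\otimes i},g^{(i)}\rangle$ with $g^{(i)}\in\mathcal F^{(i)}(X)$. Comparing the two sides of the first displayed chain of equalities, the coefficient at $\langle\omega^{\otimes n},\cdot\rangle$ must be $f^{(n)}$, yielding $\sum_{k=i}^n\so(k,i)\SO(n,k)f^{(n)}=f^{(n)}$ when $i=n$, while for $i<n$ the corresponding coefficient must vanish. This delivers exactly $\sum_{k=1}^n\so(k,i)\SO(n,k)=\delta_{ni}\mathbf 1^{(i)}$, once again using that $\so(k,i)\SO(n,k)=0$ for $k<i$. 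For the second identity, the same reasoning applies provided the family of falling factorial polynomials is a basis; but the linear transformation between $\{\langle\omega^{\otimes i},\cdot\rangle\}_{i\le n}$ and $\{\langle(\omega)_i,\cdot\rangle\}_{i\le n}$ is triangular with the identity on the diagonal (since $\so(i,i)=\SO(i,i)=\mathbf 1^{(i)}$ by \eqref{rses3waw2}), so this family is also a basis in the sense required, and the same coefficient-matching yields $\sum_{k=1}^n\SO(k,i)\so(n,k)=\delta_{ni}\mathbf 1^{(i)}$.

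The only real subtlety is justifying that the coefficient-matching is legitimate, i.e.\ that $f^{(n)}\in\mathcal F^{(n)}(X)$ is determined by the values $\langle\omega^{\otimes n},f^{(n)}\rangle$ as $\omega$ ranges over $M(X)$, and analogously for the falling-factorial expansion. This is the content of the polarization observation already recorded in the preliminaries (each symmetric measure is determined by its values on $\xi^{\otimes n}$, hence so are the coefficient functions) combined with the triangularity noted above; so no new estimate is needed, and I expect this uniqueness bookkeeping to be the only nontrivial point, the rest being purely algebraic substitution.
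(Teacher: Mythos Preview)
Your proof is correct and follows essentially the same approach as the paper: compose the defining relations \eqref{yrdqs} and \eqref{fdr6ew456u4w}, interchange the order of summation, and appeal to uniqueness of the monomial (respectively falling-factorial) expansion of a polynomial on $M(X)$. The paper's proof is slightly terser (it dismisses the second identity as ``similar''), while you make the basis property of the falling factorials explicit via triangularity, but the argument is the same.
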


\begin{proof} The proof is essentially the same as in the classical case. For the reader's convenience, we still present it. Formula \eqref{yr6e3} obviously holds when $i>n$. Let $i\le n$. 
For each $f^{(n)}\in\mathcal F^{(n)}(X)$, we get
\begin{align*}
\langle\omega^{\otimes n},f^{(n)}\rangle&=\sum_{k=1}^n \langle(\omega)_k,\SO(n,k)f^{(n)}\rangle\\
&=\sum_{k=1}^n\sum_{i=1}^k\langle\omega^{\otimes i},\so(k,i)\SO(n,k)f^{(n)}\rangle\\
&=\sum_{i=1}^n\big\langle\omega^{\otimes i},\sum_{k=i}^n \so(k,i)\SO(n,k)f^{(n)}\big\rangle\\&=\sum_{i=1}^n\big\langle\omega^{\otimes i},\sum_{k=1}^n \so(k,i)\SO(n,k)f^{(n)}\big\rangle,
\end{align*}
which proves the first equality in \eqref{yr6e3}. The proof of the second equality  is similar.
\end{proof}

We finish this section with a short discussion of Lah operators. For $n,k\in\mathbb N$, $k\le n$, we define the {\it Lah operator} $\LO(n,k)\in\mathcal L(\mathcal F^{(n)}(X),\mathcal F^{(k)}(X))$ by
\begin{equation}\label{xtesa5y}
\langle(\omega)^{(n)},f^{(n)}\rangle=\sum_{k=1}^n\langle(\omega)_k\,,\LO(n,k)f^{(n)}\rangle, \quad f^{(n)}\in\mathcal F^{(n)}(X).\end{equation}
In view of \eqref{ctsrea4w}, we equivalently have
$$\langle(\omega)_n,f^{(n)}\rangle=\sum_{k=1}^n\langle(\omega)^{(k)},(-1)^{n-k}\,\LO(n,k)f^{(n)}\rangle, \quad f^{(n)}\in\mathcal F^{(n)}(X).$$

\begin{proposition} (i) For $n,k\in\mathbb N$, $k\le n$, we have
\begin{align}
\LO(n,k)&=\sum_{i=k}^n\SO(i,k)\co(n,i)\label{ra4q4}\\
&=\frac{n!}{k!}\sum_{\substack{(i_1,\dots,i_k)\in\mathbb N^k\\i_1+\dots+i_k=n}}\mathbb D^{(n)}_{i_1,\dots,i_k}\label{dcutwr}\\
&=\sum_{\lambda=\{\lambda_1,\dots,\lambda_k\}\in \operatorname{UP}(n,k)}|\lambda_1|!\,\dotsm |\lambda_k|!\,\,\mathbb D^{(n)}_\lambda.\label{CWD}
\end{align}

(ii) For each $k\in\mathbb N$,
\begin{equation}\label{vtsw5w3}
\sum_{n=k}^\infty\frac1{n!}\,\LO(n,k)\xi^{\otimes n}=\frac{1}{k!}\,\bigg(\frac{\xi}{1-\xi}\bigg)^{\otimes k},\quad\xi\in\mathcal F(X).\end{equation}
\end{proposition}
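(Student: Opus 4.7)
The plan is to split the proof into three steps: (a) derive the algebraic identity \eqref{ra4q4} directly from the defining relations of $\co$, $\SO$, and $\LO$; (b) establish the generating function \eqref{vtsw5w3} of part (ii) by reusing the known generating functions \eqref{dw5w} and \eqref{esa4q42}; and (c) read off the explicit forms \eqref{dcutwr} and \eqref{CWD} from \eqref{vtsw5w3} exactly as in the proof of Proposition~\ref{a4w5qwgb}.

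For step (a), I start from \eqref{drtrehgy} and substitute the expansion \eqref{fdr6ew456u4w} into each $\langle\omega^{\otimes i},\co(n,i)f^{(n)}\rangle$; swapping the order of summation gives
\begin{equation*}
\langle(\omega)^{(n)},f^{(n)}\rangle=\sum_{k=1}^n\Big\langle(\omega)_k,\,\sum_{i=k}^n\SO(i,k)\co(n,i)f^{(n)}\Big\rangle,
\end{equation*}
and comparing with \eqref{xtesa5y} yields \eqref{ra4q4}, provided the $(\omega)_k$-expansion of a polynomial on $M(X)$ is unique. This uniqueness follows by iteratively applying the lowering operators $D_x$ from \eqref{rsa4q}: if $\sum_{k=1}^n\langle(\omega)_k,h^{(k)}\rangle\equiv0$, then $D_{x_1}\dotsm D_{x_n}$ applied to this sum equals $n!\,h^{(n)}(x_1,\dots,x_n)$ (all lower-order terms being annihilated), forcing $h^{(n)}=0$, after which the remaining $h^{(k)}$ are obtained by downward induction.

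For step (b), the key observation is that $-\log(1-\xi)=\log(1+\eta)$ with $\eta=\xi/(1-\xi)$; hence \eqref{dw5w} applied at $\eta$ together with \eqref{esa4q42} gives, as formal power series,
\begin{equation*}
\sum_{n=0}^\infty\frac1{n!}\big\langle(\omega)_n,(\xi/(1-\xi))^{\otimes n}\big\rangle=\sum_{n=0}^\infty\frac1{n!}\langle(\omega)^{(n)},\xi^{\otimes n}\rangle.
\end{equation*}
Expanding the right-hand side via \eqref{xtesa5y} and matching the $\langle(\omega)_k,\cdot\rangle$-coefficients by the same uniqueness argument yields \eqref{vtsw5w3}. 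For step (c), I expand
\begin{equation*}
\Big(\frac{\xi}{1-\xi}\Big)^{\otimes k}=\Big(\sum_{i=1}^\infty\xi^i\Big)^{\otimes k}=\sum_{n=k}^\infty\sum_{\substack{(i_1,\dots,i_k)\in\mathbb N^k\\i_1+\dots+i_k=n}}\xi^{i_1}\odot\dots\odot\xi^{i_k},
\end{equation*}
where symmetric and ordinary tensor products coincide because the full sum is automatically symmetric in its $k$ arguments, and then use \eqref{cre4a543w6} to rewrite $\xi^{i_1}\odot\dots\odot\xi^{i_k}=\mathbb D^{(n)}_{i_1,\dots,i_k}\xi^{\otimes n}$; comparing $\xi^{\otimes n}$-coefficients produces \eqref{dcutwr}, and the passage from \eqref{dcutwr} to \eqref{CWD} is the regrouping by \eqref{cs5w5a} already carried out for $\SO(n,k)$ in the proof of Proposition~\ref{a4w5qwgb}. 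The only real obstacle is the uniqueness lemma used twice above, but the lowering-operator argument makes it transparent and, importantly, avoids any circular dependence on the Stirling operators themselves.
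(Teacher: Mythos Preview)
Your proof is correct, but it follows a different route from the paper's. The paper proceeds in the order \eqref{ra4q4} $\to$ \eqref{dcutwr} $\to$ \eqref{CWD} $\to$ \eqref{vtsw5w3}: it substitutes the explicit expressions \eqref{cx5waq4} and \eqref{cta4waq2wed} for $\co(n,i)$ and $\SO(i,k)$ into the composition \eqref{ra4q4}, obtains a triple sum, and collapses it to \eqref{dcutwr} by computing the auxiliary quantity
\[
r(l)=\sum_{m=1}^l\frac1{m!}\sum_{\substack{(i_1,\dots,i_m)\in\mathbb N^m\\ i_1+\dots+i_m=l}}\frac1{i_1\dotsm i_m}=\frac1{l!}\sum_{m=1}^l c(l,m)=1,
\]
using the combinatorial fact $\sum_m c(l,m)=l!$; only then is \eqref{vtsw5w3} read off from \eqref{dcutwr} as in Proposition~\ref{cesseas}. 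You instead go \eqref{ra4q4} $\to$ \eqref{vtsw5w3} $\to$ \eqref{dcutwr} $\to$ \eqref{CWD}, deriving the generating function first from the identity $\log\bigl(1+\tfrac{\xi}{1-\xi}\bigr)=-\log(1-\xi)$ together with \eqref{dw5w}, \eqref{esa4q42}, and then expanding $(\xi/(1-\xi))^{\otimes k}$. Your route sidesteps the $r(l)=1$ computation entirely and makes the Lah generating function a direct consequence of the falling/rising factorial generating functions; the paper's route, by contrast, keeps everything at the level of finite operator sums and never needs to match coefficients across a formal power series identity in both $\omega$ and $\xi$. Both are clean; your argument is arguably more conceptual, while the paper's is more self-contained combinatorially.
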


\begin{proof}
Equality \eqref{ra4q4} follows immediately from \eqref{fdr6ew456u4w}, \eqref{drtrehgy},   and \eqref{xtesa5y}. By \eqref{cx5waq4}, \eqref{cta4waq2wed}, and \eqref{ra4q4}, we obtain
\begin{align}
\LO(n,k)&=\sum_{j=k}^n\frac{j!}{k!}\sum_{\substack{(i_1,\dots,i_k)\in\mathbb N^k\\i_1+\dots+i_k=j}}\frac1{i_1!\dotsm i_k!}\,\mathbb D^{(j)}_{i_1,\dots,i_k}\,\frac{n!}{j!}\,\sum_{\substack{(l_1,\dots,l_j)\in\mathbb N^j\\l_1+\dots+l_j=n}}\frac1{l_1\dotsm l_j}\,\mathbb D^{(n)}_{l_1,\dots,l_j}\notag\\
&=\frac{n!}{k!}\sum_{j=k}^n \sum_{\substack{(i_1,\dots,i_k)\in\mathbb N^k\\i_1+\dots+i_k=j}}\sum_{\substack{(l_1,\dots,l_j)\in\mathbb N^j\\l_1+\dots+l_j=n}}\frac1{i_1!\dotsm i_k!}\,\frac1{l_1\dotsm l_j}\,\mathbb D^{(j)}_{i_1,\dots,i_k}\,
\mathbb D^{(n)}_{l_1,\dots,l_j}\notag\\
&=\frac{n!}{k!}\,\sum_{\substack{(l_1,\dots,l_k)\in\mathbb N^k\\l_1+\dots+l_k=n}} r(l_1)\dotsm r(l_k)\mathbb D^{(n)}_{l_1,\dots,l_k}.\label{seqw54q327}
\end{align}
Here, for $l\in\mathbb N$,
\begin{equation}
r(l):=\sum_{m=1}^l\frac1{m!}\,\sum_{\substack{(i_1,\dots,i_m)\in\mathbb N^m\\i_1+\dots+i_m=l}}\frac1{i_1\dotsm i_m}=\frac1{l!}\sum_{m=1}^lc(l,m)=\frac{l!}{l!}=1,\label{erq4332q}
\end{equation}
where we used Remark \ref{tewu6534e}. Formulas \eqref{seqw54q327} and \eqref{erq4332q} imply \eqref{dcutwr}. Next, \eqref{cs5w5a} and \eqref{dcutwr} imply \eqref{CWD}. Finally, formula \eqref{vtsw5w3} easily follows from \eqref{dcutwr}  by analogy with the proof of Proposition~\ref{cesseas}. 
\end{proof}

\begin{corollary}
For any $i,n\in\mathbb N$,
$$\sum_{k=1}^n(-1)^{n-k}\,\mathbf L(k,i)\,\mathbf L(n,k)=\delta_{ni}\mathbf 1^{(i)}.
$$
\end{corollary}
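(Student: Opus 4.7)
The plan is to mimic the proof of Proposition~\ref{fye6e}, substituting the role of the two mutual expansions between $\omega^{\otimes n}$ and $(\omega)_n$ (done by $\SO$ and $\so$) by the two mutual expansions between $(\omega)_n$ and $(\omega)^{(n)}$ (done by $\LO$ with signs). The starting observation is that the defining identity \eqref{xtesa5y} of the Lah operators, combined with the reciprocal identity
$$\langle(\omega)_n,f^{(n)}\rangle=\sum_{k=1}^n(-1)^{n-k}\langle(\omega)^{(k)},\LO(n,k)f^{(n)}\rangle$$
that is displayed immediately after \eqref{xtesa5y}, turns $\LO$ into its own inverse up to the sign factor $(-1)^{n-k}$. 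This is exactly what the corollary asserts at the operator level.

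First I would fix $f^{(n)}\in\mathcal F^{(n)}(X)$ and $\omega\in M(X)$, and start from the second (signed) expansion to write
$$\langle(\omega)_n,f^{(n)}\rangle=\sum_{k=1}^n(-1)^{n-k}\langle(\omega)^{(k)},\LO(n,k)f^{(n)}\rangle.$$
Then I would apply the direct Lah expansion \eqref{xtesa5y} to each $\langle(\omega)^{(k)},\LO(n,k)f^{(n)}\rangle$, obtaining
$$\langle(\omega)^{(k)},\LO(n,k)f^{(n)}\rangle=\sum_{i=1}^k\langle(\omega)_i,\LO(k,i)\LO(n,k)f^{(n)}\rangle.$$
Substituting and interchanging the two finite sums (and using the convention from Remark~\ref{rtsdqwsuqwds6qe} applied to $\LO$, namely $\LO(k,i)=0$ for $k<i$, so that the inner sum may be written from $k=1$ to $n$) yields
$$\langle(\omega)_n,f^{(n)}\rangle=\sum_{i=1}^n\Bigl\langle(\omega)_i,\,\sum_{k=1}^n(-1)^{n-k}\LO(k,i)\LO(n,k)f^{(n)}\Bigr\rangle.$$

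To conclude, I would invoke the uniqueness of the falling-factorial expansion: as stated in the preliminaries, the linear span of the falling factorials $\omega\mapsto\langle(\omega)_i,g^{(i)}\rangle$ is all of $\mathcal P(M(X))$, and in fact, since falling factorials form a triangular change of basis from the monomials $\omega^{\otimes i}$ whose coefficients $f^{(i)}$ are uniquely determined (as used in Section~\ref{sa4t2q} to justify that \eqref{yrdqs} uniquely identifies $\so(n,k)$), the coefficients $g^{(i)}\in\mathcal F^{(i)}(X)$ in the expansion $\sum_i\langle(\omega)_i,g^{(i)}\rangle$ are likewise unique. Comparing the displayed identity with the trivial expansion $\langle(\omega)_n,f^{(n)}\rangle=\sum_{i=1}^n\langle(\omega)_i,\delta_{ni}\mathbf 1^{(i)}f^{(n)}\rangle$ therefore forces
$$\sum_{k=1}^n(-1)^{n-k}\LO(k,i)\LO(n,k)f^{(n)}=\delta_{ni}\mathbf 1^{(i)}f^{(n)}$$
for every $f^{(n)}\in\mathcal F^{(n)}(X)$, which is the claim. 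I do not foresee a genuine obstacle: the only point requiring a line of justification is the uniqueness of the falling-factorial expansion in $\mathcal P(M(X))$ at the level of $\mathcal F^{(n)}(X)$-valued coefficients (not only $\mathcal{CF}^{(n)}(X)$-valued), but this is granted by Remark~\ref{ts5as2}, since $\LO(n,k)$ belongs to $\mathcal L(\mathcal F^{(n)}(X),\mathcal F^{(k)}(X))$ by definition.
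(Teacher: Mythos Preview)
Your proof is correct, but the paper takes a different, more algebraic route. The paper's one-line proof invokes \eqref{xfdyf}, \eqref{ra4q4}, and Proposition~\ref{fye6e}: writing $\LO(k,i)=\sum_{l}\SO(l,i)\co(k,l)=\sum_l(-1)^{k-l}\SO(l,i)\so(k,l)$ and $\LO(n,k)=\sum_m\SO(m,k)\co(n,m)$, one substitutes, collapses the inner $k$-sum via $\sum_k\so(k,l)\SO(m,k)=\delta_{ml}\mathbf 1^{(l)}$, and then collapses the remaining sum via $\sum_l\SO(l,i)\so(n,l)=\delta_{ni}\mathbf 1^{(i)}$, using \eqref{xfdyf} once more to absorb the sign. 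In other words, the paper factors $\LO$ through the Stirling operators and reduces the Lah orthogonality to the Stirling orthogonality already established.

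Your approach instead reproves the orthogonality from scratch by replaying the argument of Proposition~\ref{fye6e} with the pair $(\omega)_n\leftrightarrow(\omega)^{(n)}$ in place of $(\omega)_n\leftrightarrow\omega^{\otimes n}$. This is self-contained and avoids the factorization \eqref{ra4q4}, at the modest cost of re-invoking uniqueness of the falling-factorial expansion. One small quibble: your appeal to Remark~\ref{ts5as2} for that uniqueness is not quite on target---the relevant fact is the one stated at the start of Section~\ref{sa4t2q}, namely that the monomial coefficients of a polynomial are unique and $\so(n,n)=\mathbf 1^{(n)}$, which makes the falling-factorial coefficients unique as well.
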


\begin{proof}
Immediate by \eqref{xfdyf}, \eqref{ra4q4}, and Proposition~\ref{fye6e}.
\end{proof}

\section{Euler's formula for the Stirling operators of the~second kind}\label{few53q}

\subsection{Euler's formula}

We denote by $\Gamma_0(X)$ the subset of the configuration 
space $\Gamma(X)$ consisting of all finite (possibly empty) configurations. Thus, each element $\eta\in \Gamma_0(X)$  is understood as both a finite subset $\eta=\{x_1,\dots,x_n\}$ of $X$ and a finite measure $\delta_{x_1}+\dots+\delta_{x_n}$. (Note that the empty set is associated with the zero measure.)
We now need an extension of $\Gamma_0(X)$, the set of multiple finite configurations in $X$, denoted by $\ddot\Gamma_0(X)$. Each element $\eta\in \ddot\Gamma_0(X)$  is understood as both a finite multiset $\eta=[x_1,\dots,x_n]$ and the corresponding finite measure $\delta_{x_1}+\dots+\delta_{x_n}$.
As a subset of $M(X)$, $\ddot\Gamma_0(X)$ is the set of all positive finite integer-valued measures on $X$. 

The following theorem gives an infinite dimensional counterpart of Euler's formula for $S(n,k)$, cf.\ \cite[Section 9.1]{QG}.

\begin{theorem}\label{rsdtresw45w}
Let $p\in\mathcal P(M(X))$ be a polynomial of degree $n$. Then
\begin{equation}\label{x4tw5wq}
p(\omega)=\sum_{k=0}^n\langle(\omega)_k,g^{(k)}\rangle,
\end{equation} where $g^{(0)}=p(0)$ and, for each $k=1,\dots,n$,
\begin{align}
g^{(k)}(x_1,\dots,x_k)&=\frac1{k!}\,(D_{x_1}\dotsm D_{x_k}\,p)(0)\label{cddssw4}\\
&=\frac{(-1)^k}{k!}\sum_{\eta\subseteq[x_1,\dots,x_k]}(-1)^{|\eta|}p(\eta).\label{rtdesse}\end{align}
Here $|\eta|$ is the cardinality of the multiset $\eta$, equivalently $|\eta|=\eta(X)$.

In particular, for each $n\in\mathbb N$, $k=1,\dots,n$, and 
$f^{(n)}\in\mathcal F^{(n)}(X)$,
\begin{equation}\label{fxqdqs}
(\SO(n,k)f^{(n)})(x_1,\dots,x_k)=\frac{(-1)^k}{k!}\sum_{\eta\subseteq[x_1,\dots,x_k]}(-1)^{|\eta|}\langle\eta^{\otimes n},f^{(n)}\rangle.\end{equation}
\end{theorem}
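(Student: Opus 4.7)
The strategy is to secure the existence of the expansion \eqref{x4tw5wq} from the Stirling decomposition and then compute $g^{(k)}$ in two different ways. For existence, recall that any $p\in\mathcal P(M(X))$ of degree $n$ is a linear combination of a constant $p(0)=f^{(0)}$ and monomials $\langle\omega^{\otimes m},f^{(m)}\rangle$ with $m\le n$; applying the defining relation \eqref{fdr6ew456u4w} of the Stirling operators $\SO(m,k)$ to each monomial yields an expansion of the claimed form. Uniqueness of the coefficient functions $g^{(k)}$ is built in, since \eqref{fdr6ew456u4w} already uniquely identifies $\SO(m,k)$ and the expansion is triangular with identity on the diagonal.

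To identify $g^{(k)}$ with $\tfrac{1}{k!}(D_{x_1}\cdots D_{x_k}p)(0)$, I would apply the operator $D_{x_1}\cdots D_{x_k}$ to both sides of \eqref{x4tw5wq} and then set $\omega=0$. By the lowering identity \eqref{rsa4q} and the symmetry of each $g^{(j)}$, iteration gives
\[
D_{x_1}\cdots D_{x_k}\langle(\omega)_j,g^{(j)}\rangle=\frac{j!}{(j-k)!}\,\langle(\omega)_{j-k},g^{(j)}(x_1,\ldots,x_k,\cdot)\rangle\quad (j\ge k),
\]
while the expression vanishes for $j<k$. Setting $\omega=0$, Remark~\ref{c4q4q} kills every term with $j>k$ (since $(0)_{j-k}=0$), and for $j=k$ the convention $(0)_0=1$ leaves precisely $k!\,g^{(k)}(x_1,\ldots,x_k)$, proving \eqref{cddssw4}.

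For the explicit inclusion–exclusion formula \eqref{rtdesse}, I would expand the difference operator combinatorially. From \eqref{r3q3q2sqg}, $D_x=T_x-\mathrm{Id}$ with the shift $T_xp(\omega):=p(\omega+\delta_x)$, hence
\[
D_{x_1}\cdots D_{x_k}=\prod_{i=1}^{k}(T_{x_i}-\mathrm{Id})=\sum_{I\subseteq\{1,\ldots,k\}}(-1)^{k-|I|}\,\prod_{i\in I}T_{x_i}.
\]
Applied at $\omega=0$, each subset $I$ produces $p$ evaluated at the multiset $[x_i:i\in I]$; reindexing the sum over sub‑multisets $\eta\subseteq[x_1,\ldots,x_k]$ and collecting signs yields \eqref{rtdesse}. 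The special case \eqref{fxqdqs} is then immediate: specialize $p(\omega)=\langle\omega^{\otimes n},f^{(n)}\rangle$ and compare with \eqref{fdr6ew456u4w}; uniqueness forces $g^{(k)}=\SO(n,k)f^{(n)}$.

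The argument is routine once one trusts the lowering identity \eqref{rsa4q} and the factorization $D_x=T_x-\mathrm{Id}$. The only place where a little care is needed is the substitution $\omega=0$, where the separate conventions $(0)_0=1$ and $(0)_j=0$ for $j\ge1$ must be invoked to single out the correct surviving term; no deeper obstruction appears to be present.
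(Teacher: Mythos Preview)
Your proof is correct and follows essentially the same route as the paper. The only cosmetic difference is that the paper outsources the identification \eqref{cddssw4} to the polynomial expansion theorem in \cite[Proposition~4.6]{FKLO}, whereas you establish the expansion \eqref{x4tw5wq} directly via the Stirling decomposition \eqref{fdr6ew456u4w} and then read off the coefficients by applying $D_{x_1}\cdots D_{x_k}$ and evaluating at $\omega=0$; the inclusion--exclusion step for \eqref{rtdesse} and the specialization to \eqref{fxqdqs} are identical in both.
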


\begin{proof}  Formula \eqref{cddssw4} follows from \eqref{rsa4q} and the polynomial expansion theorem \cite[Proposition 4.6]{FKLO}. Note that, in \cite{FKLO}, this theorem was proved under slightly different assumptions. Nevertheless, an easy check shows that it remains true in our setting.

 By \eqref{r3q3q2sqg} and the induction, we easily conclude that
\begin{equation}\label{dt5w56uw}
(D_{x_1}\dotsm D_{x_k}\,p)(\omega)=(-1)^k\sum_{\eta\subseteq[x_1,\dots,x_k]}(-1)^{|\eta|}p(\omega+\eta).\end{equation}
By \eqref{cddssw4} and \eqref{dt5w56uw}, we get \eqref{rtdesse}.
Setting $p(\omega)=\langle\omega^{\otimes n},f^{(n)}\rangle$ into formulas \eqref{x4tw5wq}, \eqref{rtdesse}, and using \eqref{fdr6ew456u4w}, we obtain 
  \eqref{fxqdqs}.
\end{proof}

\begin{remark}\label{tdr652r6} In view of Remark \ref{rtsdqwsuqwds6qe}, formula \eqref{fxqdqs} holds, in fact, for all $n,k\in\mathbb N_0$.
\end{remark}

\begin{remark}
Formulas \eqref{x4tw5wq}, \eqref{rtdesse}  imply that each polynomial $p\in\mathcal P(M(X))$ is uniquely determined by its values on $\ddot\Gamma_0(X)$. 
\end{remark}

\begin{remark} In the case of a single-point space $X$, formula \eqref{fxqdqs} becomes  the classical Euler's formula
$S(n,k)=\frac{(-1)^k}{k!}\sum_{l=1}^k(-1)^l\binom k l l^n$.
\end{remark}

\subsection{$\mathcal K$-transform}\label{fdsters5w5}

Let us now briefly discuss how Theorem \ref{rsdtresw45w} is related to the
theory of point processes. 

Denote by $\mathcal F(\ddot\Gamma_0(X))$ the algebraic direct sum of the $\mathcal F^{(n)}(X)$ spaces, $n\in\mathbb N_0$. 
Thus, each $f\in \mathcal F(\ddot\Gamma_0(X))$ is of the form $f=(f^{(n)})_{n=0}^\infty$ with $f^{(n)}\in \mathcal F^{(n)}(X)$ and, for some $N\in\mathbb N$, $f^{(n)}=0$ for all $n\ge N$. We may identify $\mathcal F(\ddot\Gamma_0(X))$ with a class of functions on $\ddot\Gamma_0(X)$. Indeed, for each $f=(f^{(n)})_{n=0}^\infty$, define $f(\varnothing):=f^{(0)}$ and $f([x_1,\dots,x_n]):=f^{(n)}(x_1,\dots,x_n)$.
Below, with an abuse of notation, we will use both interpretations of elements of $\mathcal F(\ddot\Gamma_0(X))$.

Similarly to \cite[Subsection~3.1]{KunaInfusino} (see also \cite{Len75b}), we define a bijective map 
$\mathcal K:\mathcal F(\ddot\Gamma_0(X))\to\mathcal P(M(X))$ by 
$$ (\mathcal Kf)(\omega)=\sum_{n=0}^\infty\left\langle \binom \omega n,f^{(n)}\right\rangle,$$
the sum being in fact finite. In particular,  by formula \eqref{reaq43q42}, for 
$\omega=\gamma\in\Gamma(X)$, we get
\begin{equation}\label{drtw5wsd}
(\mathcal Kf)(\gamma)=\sum_{\eta\subseteq\gamma,\ \eta\in\Gamma_0(X)}f(\eta).\end{equation}
Theorem \ref{rsdtresw45w} implies that the inverse map $\mathcal K^{-1}:\mathcal P(M(X))\to\mathcal F(\ddot\Gamma_0(X))$ 
 is given by
\begin{equation}\label{5w45q43q}
(\mathcal K^{-1}p)(\eta)=\sum_{\sigma\subseteq\eta}(-1)^{|\eta|-|\sigma|}p(\sigma).\end{equation}

In the theory of point processes, one considers a slightly different map, denoted by $K$. Let $\mathcal F(\Gamma_0(X))$ denote the class of function on $\Gamma_0(X)$ obtained as restrictions of functions from $\mathcal F(\ddot\Gamma_0(X))$. (Note that, for each $f\in \mathcal F(\Gamma_0(X))$, there are infinitely many functions from $\mathcal F(\ddot\Gamma_0(X))$ whose restriction to $\Gamma_0(X)$ coincides with $f$.) Similarly, let $\mathcal P(\Gamma(X))$ denote the class of functions on $\Gamma(X)$ obtained as restrictions of polynomials from $\mathcal P(M(X))$.  One defines a bijective map $K:\mathcal F(\Gamma_0(X))\to\mathcal P(\Gamma(X))$ by formula \eqref{drtw5wsd} in which $\mathcal K$ is replaced by $K$ and $f\in \mathcal F(\Gamma_0(X))$. The inverse operator, $K^{-1}$, is then given by formula \eqref{5w45q43q} in which $\mathcal K$ replaced by $K$ and $p\in \mathcal P(\Gamma(X))$.  Note that, in this case, formula \eqref{5w45q43q} is just a straightforward application of the inclusion-exclusion principle. 

The main reason for introducing the $K$-transform in the theory of point processes is that, for a point process on $X$ (equivalently a probability measure on $\Gamma(X)$), the measure $\theta$ on $\Gamma_0(X)$ defined by $\mathbb E(Kf)=\int_{\Gamma_0(X)}f\,d\theta$ is called the {\it correlation measure} of the point process, and (under certain weak assumptions) $\theta$ uniquely determines the point process. See e.g.\ \cite{KK,Len1,Len75b} for details. 

One defines a binary operation $\star$ on $\mathcal F(\Gamma_0(X))$ so that, for any $f,g\in\mathcal F(\Gamma_0(X))$, one has $\big(K(f\star g)\big)(\gamma)=(Kf)(\gamma)(Kg)(\gamma)$. An easy calculation show that 
\begin{equation}\label{sea453q}
(f\star g)(\eta)=\sum_{ \substack{\sigma_1,\sigma_2,\sigma_3\in\Gamma_0(X) \\ \sigma_1+\sigma_2+\sigma_3=\eta} }f(\sigma_1+\sigma_2)g(\sigma_2+\sigma_3),\quad\eta\in\Gamma_0(X),\end{equation}
see \cite{KK}. We now extend the binary operation $\star$ to $\mathcal F(\ddot\Gamma_0(X))$ by requiring that
\begin{equation}\label{stw6us}
\big(\mathcal K(f\star g)\big)(\omega)=(\mathcal Kf)(\omega)(\mathcal Kg)(\omega)\quad\text{for all } \omega\in M(X).
\end{equation}

\begin{remark} In the theory of point processes, the $K$-transform is often thought of as a counterpart of the Fourier transform, see e.g.\ \cite{KK}.  Hence, in view of formula \eqref{stw6us}, it is natural to interpret the binary operation $\star$ as a convolution of functions from $\mathcal F(\ddot\Gamma_0(X))$.
\end{remark}

\begin{proposition}\label{sweawqa}
For any $f,g\in \mathcal F(\ddot\Gamma_0(X))$, the $\star$ product of $f$ and $g$ is given by formula \eqref{sea453q} in which $\Gamma_0(X)$ is replaced by $\ddot\Gamma_0(X)$. 
\end{proposition}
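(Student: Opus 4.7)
The plan is to apply $\mathcal K^{-1}$ directly to $(\mathcal K f)(\mathcal K g)$ and reorganize via M\"obius inversion. By the defining relation \eqref{stw6us}, $f \star g = \mathcal K^{-1}((\mathcal K f)(\mathcal K g))$; combined with formula~\eqref{rtdesse} of Theorem~\ref{rsdtresw45w} this gives
\begin{equation*}
(f \star g)(\eta) = \sum_{\sigma \subseteq \eta} (-1)^{|\eta|-|\sigma|}(\mathcal K f)(\sigma)(\mathcal K g)(\sigma), \qquad \eta \in \ddot\Gamma_0(X),
\end{equation*}
where sub-multisets $\sigma \subseteq \eta$ are parametrised by subsets of the underlying index set of $\eta$.

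Next I would expand $(\mathcal K f)(\sigma)$ on $\ddot\Gamma_0(X)$. For $\sigma = \sum_{i \in R}\delta_{x_i}$, unpacking $(\sigma)_n$ from \eqref{xesa332a} and using the symmetry of $f^{(n)}$ shows
\begin{equation*}
(\mathcal K f)(\sigma) = \sum_{n=0}^{\infty} \frac{1}{n!}\langle(\sigma)_n, f^{(n)}\rangle = \sum_{T \subseteq R} f\Bigl(\sum_{i \in T}\delta_{x_i}\Bigr),
\end{equation*}
the natural extension of \eqref{drtw5wsd} to multisets once sub-multisets are counted by index subsets. Substituting this for both $\mathcal K f$ and $\mathcal K g$, writing $\eta = \sum_{i=1}^N\delta_{x_i}$, and swapping sums yields
\begin{equation*}
(f \star g)(\eta) = \sum_{T_1, T_2 \subseteq \{1,\ldots, N\}} f(\tau_{T_1})\, g(\tau_{T_2}) \sum_{R :\, T_1 \cup T_2 \subseteq R \subseteq \{1,\ldots,N\}}(-1)^{N-|R|},
\end{equation*}
with $\tau_T := \sum_{i \in T}\delta_{x_i}$. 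The inner sum is a standard inclusion--exclusion: it equals $1$ when $T_1 \cup T_2 = \{1, \ldots, N\}$ and $0$ otherwise.

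To finish, I would parametrise the surviving pairs $(T_1, T_2)$ by ordered partitions $(I_1, I_2, I_3)$ of $\{1, \ldots, N\}$ via $I_1 = T_1 \setminus T_2$, $I_2 = T_1 \cap T_2$, $I_3 = T_2 \setminus T_1$, and set $\sigma_j := \sum_{i \in I_j}\delta_{x_i}$; then $\sigma_1 + \sigma_2 + \sigma_3 = \eta$, $\tau_{T_1} = \sigma_1 + \sigma_2$ and $\tau_{T_2} = \sigma_2 + \sigma_3$, which is precisely the formula \eqref{sea453q} now ranging over $\ddot\Gamma_0(X)^3$. The principal obstacle is bookkeeping: one must consistently adopt the indexed viewpoint, since the sub-multiset sums arising from $\mathcal K$ and $\mathcal K^{-1}$ are counted by index subsets with appropriate multiplicity at coincident points, matching the multinomial structure of multiset decompositions and reducing to the $\Gamma_0(X)$ case when $\eta$ has no repetitions.
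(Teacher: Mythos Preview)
Your argument is correct and genuinely different from the paper's. The paper proceeds inductively: from the recurrence relation \eqref{fseraw4qa} for falling factorials it extracts the identity $f^{(n)}\star\xi=(n+1)f^{(n)}\odot\xi+N(\xi)f^{(n)}$, then inducts on $m$ for $g=\xi^{\otimes m}$, reducing $f^{(n)}\star\xi^{\otimes(m+1)}$ to two instances of the inductive hypothesis. Your route is instead a direct M\"obius-inversion computation: you invoke the explicit inverse \eqref{5w45q43q} (equivalently \eqref{rtdesse}), expand $\mathcal Kf$ and $\mathcal Kg$ on a multiset via the indexed extension of \eqref{drtw5wsd}, and collapse the triple sum by inclusion--exclusion on the index set. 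Your approach is shorter and makes the combinatorics of the three-part decomposition visible in a single step, while the paper's approach isolates the useful auxiliary formula $f^{(n)}\star\xi=(n+1)f^{(n)}\odot\xi+N(\xi)f^{(n)}$ and ties the result to the recursive structure of falling factorials. Your closing caveat about the indexed viewpoint is well placed: the paper's conventions for sums such as $\sum_{\sigma\subseteq\eta}$ on $\ddot\Gamma_0(X)$ are indeed index-based (as the derivation of \eqref{dt5w56uw} shows), so the decompositions $\sigma_1+\sigma_2+\sigma_3=\eta$ are counted with multinomial multiplicity when points coincide, and your bijection with ordered partitions $(I_1,I_2,I_3)$ of the index set is exactly the right formalisation.
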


\begin{proof} The statement can be immediately concluded from (the proof of)\cite[Proposition~3.4]{KunaInfusino}. For the reader's convenience, we will now outline an (alternative) proof of it. 
Formula \eqref{fseraw4qa} implies that, for any $f^{(n)}\in\mathcal F^{(n)}(X)$ and $\xi\in \mathcal F(X)$,
\begin{equation}\label{fsre5w5}
f^{(n)}\star\xi=(n+1)f^{(n)}\odot\xi+N(\xi)f^{(n)}.\end{equation}
Here $N(\xi)f^{(n)}\in\mathcal F^{(n)}(X)$ is defined by
\begin{equation}\label{cresa5}
(N(\xi)f^{(n)})(x_1,\dots,x_n):=f^{(n)}(x_1,\dots,x_n)(\xi(x_1)+\dots+\xi(x_n)).\end{equation}

It suffices to prove formula \eqref{sea453q} for $f=f^{(n)}\in\mathcal F^{(n)}(X)$ and $g=\xi^{\otimes m}\in\mathcal F^{(m)}(X)$.  For $m=1$, the result follows directly from formula~\eqref{fsre5w5}. Assume that it holds for $m$. By \eqref{fsre5w5},
$$f^{(n)}\star\xi^{\otimes (m+1)}=\frac1{m+1}\,\big((f^{(n)}\star\xi^{\otimes m})\star\xi-f^{(n)}\star (N(\xi)\xi^{\otimes m})\big),$$
then one uses the induction assumption applied to $f^{(n)}\star\xi^{\otimes m}$ and $f^{(n)}\star (N(\xi)\xi^{\otimes m})$, and then formula~\eqref{sea453q} again. \end{proof}

\begin{remark}
Let us consider the one-dimensional counterpart of Proposition \ref{sweawqa}, i.e., the case when $X$ has a single point. Consider $\mathcal F(\mathbb N_0)$, the space of  $\mathbb F$-valued functions on $\mathbb N_0$ (sequences) that vanish at all but finitely many points, and denote by $\mathcal P(\mathbb F)$ the space of polynomials on $\mathbb F$.  Consider the bijective map $\mathcal K:\mathcal F(\mathbb N_0)\to\mathcal P(\mathbb F)$ given by $( Ka)(z):=\sum_{n=0}^\infty a(n)(z)_n$\,, and define a binary operation $\star$ on $\mathcal F(\mathbb N_0)$ by requiring $\big(K(a\star b)\big)(z)=(Ka)(z)(Kb)(z)$. Then, by  Proposition \ref{sweawqa}, 
$$(a\star b)(n)=\sum_{ \substack{ i,j,k\in\mathbb{N}_0 \\ i+j+k=n} }\binom n {i\ j\ k} a(i+j)b(j+k).$$
\end{remark}

\section{Identities for the Stirling operators}\label{tes5w}

We will now discuss a few identities satisfied by the Stirling operators. All of them will yield classical identities for the Stirling numbers when the space $X$ has a single point.

\begin{proposition}[Infinite dimensional Olson's identity]\label{dtrsews3a}
 Let $m,n,i\in\mathbb N$ and denote $l:=m+n$. Then, for each $f^{(l)}\in\mathcal F^{(l)}(X)$, we have
\begin{align}
&\bigg(\sum_{k=1}^n \SO(k+m,i)P_{m+k}\big(\mathbf 1^{(m)}\otimes \so(n,k)\big)f^{(l)}\bigg)(x_1,\dots,x_i)\notag\\
&\quad =\frac{(-1)^i}{i!}\sum_{\eta\subseteq[x_1,\dots,x_i],\, |\eta|\ge n}(-1)^{|\eta|}\langle(\eta)_n\odot \eta^{\otimes m},f^{(l)}\rangle.\label{xtsw5w}
\end{align}
If either $i<n$ or $i>l$, the right hand side of \eqref{xtsw5w} is equal to zero, and if $i=n$, it is equal to
\begin{equation}\label{cxsesa53w}
\big\langle(\delta_{x_1}+\dots+\delta_{x_n})^{\otimes m},f^{(l)}(x_1,\dots,x_n,\cdot)\big\rangle=\sum_{j_1=1}^n\dotsm\sum_{j_m=1}^n f^{(l)}(x_1,\dots,x_n,x_{j_1},\dots,x_{j_m}).\end{equation}
\end{proposition}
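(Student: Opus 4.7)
The plan is to apply Euler's formula \eqref{fxqdqs} directly to the outer Stirling operator $\SO(k+m,i)$ and then exploit the defining relation \eqref{yrdqs} of the Stirling operators of the first kind, summed over $k=1,\dots,n$, to collapse the inner sum into a single falling factorial $(\eta)_n$. This is really the only nontrivial input, and everything else is bookkeeping.

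Concretely, writing $g_k:=P_{m+k}\bigl(\mathbf 1^{(m)}\otimes\so(n,k)\bigr)f^{(l)}\in\mathcal F^{(m+k)}(X)$, formula \eqref{fxqdqs} (extended to all $n,k\in\mathbb N_0$ via Remark~\ref{tdr652r6}) gives
$$\bigl(\SO(k+m,i)\,g_k\bigr)(x_1,\dots,x_i)=\frac{(-1)^i}{i!}\sum_{\eta\subseteq[x_1,\dots,x_i]}(-1)^{|\eta|}\bigl\langle\eta^{\otimes(m+k)},g_k\bigr\rangle.$$
The key simplification is that $\eta^{\otimes(m+k)}$ is a symmetric measure, so the symmetrizer $P_{m+k}$ inside $g_k$ can be dropped in this pairing, and Fubini then splits the integration on $X^{m+k}$ into the first $m$ coordinates and the last $k$:
$$\bigl\langle\eta^{\otimes(m+k)},g_k\bigr\rangle=\int_{X^m}\eta^{\otimes m}(dy_1\cdots dy_m)\bigl\langle\eta^{\otimes k},\so(n,k)f^{(l)}(y_1,\dots,y_m,\cdot)\bigr\rangle.$$
Summing over $k$ and invoking \eqref{yrdqs} applied to the symmetric function $f^{(l)}(y_1,\dots,y_m,\cdot)\in\mathcal F^{(n)}(X)$, the inner sum collapses to $\langle(\eta)_n,f^{(l)}(y_1,\dots,y_m,\cdot)\rangle$. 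Reintegrating and invoking the symmetry of $f^{(l)}$ identifies the outcome with $\langle(\eta)_n\odot\eta^{\otimes m},f^{(l)}\rangle$. Restricting the summation to $|\eta|\ge n$ is then automatic by Remark~\ref{c4q4q}, since $(\eta)_n$ vanishes whenever $|\eta|<n$, and the main identity \eqref{xtsw5w} follows.

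The three degenerate cases are short. If $i<n$, every $\eta\subseteq[x_1,\dots,x_i]$ satisfies $|\eta|\le i<n$, so the condition $|\eta|\ge n$ empties the sum and the RHS vanishes. If $i>l=m+n$, then each factor $\SO(k+m,i)$ on the LHS vanishes by Remark~\ref{rtsdqwsuqwds6qe} (since $k+m\le l<i$), and the identity just established forces the RHS to vanish as well. For $i=n$, the only multiset $\eta\subseteq[x_1,\dots,x_n]$ with $|\eta|\ge n$ is $\eta=[x_1,\dots,x_n]$ itself; unwinding \eqref{xesa332a} gives $(\eta)_n=\sum_{\pi\in\mathfrak S_n}\delta_{(x_{\pi(1)},\dots,x_{\pi(n)})}$ while $\eta^{\otimes m}=\sum_{j_1,\dots,j_m=1}^n\delta_{(x_{j_1},\dots,x_{j_m})}$, and the symmetry of $f^{(l)}$ in its first $n$ arguments absorbs the $n!$ permutations against the $(-1)^n/n!$ prefactor to match \eqref{cxsesa53w}.

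I do not foresee a major obstacle; the delicate point is the interplay between the symmetrizer $P_{m+k}$ and the symmetric tensor measure $\eta^{\otimes(m+k)}$, which is precisely what allows $P_{m+k}$ to be stripped away so that Fubini decouples the $m$ and $k$ factors and \eqref{yrdqs} becomes applicable. Once that bridge is in place, the $\SO$ and $\so$ operators telescope against each other through their common test measures $\eta^{\otimes k}$.
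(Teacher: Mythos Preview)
Your argument is correct and essentially parallels the paper's proof: both apply Euler's formula \eqref{fxqdqs} to the outer $\SO(k+m,i)$, use the symmetry of $\eta^{\otimes(m+k)}$ to split off $\eta^{\otimes m}$, and then collapse the sum over $k$ to a single falling factorial $(\eta)_n$. The only notable difference is in that last step: the paper first re-expands $\langle\eta^{\otimes k},\so(n,k)\varphi^{\otimes n}\rangle$ via \eqref{fdr6ew456u4w} and then invokes the orthogonality identity of Proposition~\ref{fye6e} to obtain $\langle(\eta)_n,\varphi^{\otimes n}\rangle$, whereas you apply the defining relation \eqref{yrdqs} directly with $\omega=\eta$. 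Your route is the more direct one---Proposition~\ref{fye6e} is itself just the statement that \eqref{yrdqs} and \eqref{fdr6ew456u4w} are inverse to each other, so the paper's detour is logically equivalent but longer. The treatment of the degenerate cases $i<n$, $i>l$, $i=n$ matches the paper's exactly.
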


\begin{proof} We first note that formula \eqref{reaq43q42} and Remark \ref{c4q4q} remain true when $\gamma\in\ddot\Gamma_0(X)$.

Just to simplify notations, we assume that $f^{(l)}=\varphi^{\otimes l}$ with $\varphi\in\mathcal F(X)$.
Using Theorem~\ref{rsdtresw45w} (see also Remark~\ref{tdr652r6}), and Proposition \ref{fye6e}, we get
\begin{align}
&\bigg(\sum_{k=1}^n \SO(k+m,i)P_{m+k}\big(\mathbf 1^{(m)}\otimes \so(n,k)\big)\varphi^{\otimes l}\bigg)(x_1,\dots,x_i)\notag\\
&\quad=\bigg(\sum_{k=1}^n \SO(k+m,i)\big(\varphi^{\otimes m}\odot(\so(n,k)\varphi^{\otimes n})\big)\bigg)(x_1,\dots,x_i)\notag\\
&\quad=\sum_{k=1}^n\frac{(-1)^i}{i!}\sum_{\eta\subseteq[x_1,\dots,x_i]}(-1)^{|\eta|}\langle\eta^{\otimes m},\varphi^{\otimes m}\rangle\langle\eta^{\otimes k},\so(n,k)\varphi^{\otimes n}\rangle\notag\\
&\quad=\sum_{k=1}^n\frac{(-1)^i}{i!}\sum_{\eta\subseteq[x_1,\dots,x_i]}(-1)^{|\eta|}\langle\eta^{\otimes m},\varphi^{\otimes m}\rangle\sum_{j=1}^n\langle (\eta)_j,\SO(k,j)\so(n,k)\varphi^{\otimes n}\rangle\notag\\
&\quad=\frac{(-1)^i}{i!}\sum_{\eta\subseteq[x_1,\dots,x_i]}(-1)^{|\eta|}\langle\eta^{\otimes m},\varphi^{\otimes m}\rangle\sum_{j=1}^n\Big\langle (\eta)_j,\sum_{k=1}^n\SO(k,j)\so(n,k)\varphi^{\otimes n}\Big\rangle\notag\\
&\quad=\frac{(-1)^i}{i!}\sum_{\eta\subseteq[x_1,\dots,x_i]}(-1)^{|\eta|}\langle\eta^{\otimes m},\varphi^{\otimes m}\rangle\langle(\eta)_n,\varphi^{\otimes n}\rangle\notag\\
&\quad=\frac{(-1)^i}{i!}\sum_{\eta\subseteq[x_1,\dots,x_i],\, |\eta|\ge n}(-1)^{|\eta|}\langle\eta^{\otimes m},\varphi^{\otimes m}\rangle\langle(\eta)_n,\varphi^{\otimes n}\rangle,\notag
\end{align}
which proves formula \eqref{xtsw5w}. If $i<n$, the right hand side of \eqref{xtsw5w} obviously vanishes. If $i>l$, the left hand side of \eqref{xtsw5w} vanishes. If $i=n$, the only $\eta\subseteq[x_1,\dots,x_i]$ that satisfies $|\eta|\ge i$ is $\eta=[x_1,\dots,x_i]$. Furthermore, by formula \eqref{reaq43q42}, we get 
$$(\delta_{x_1}+\dots+\delta_{x_i})_i=i!\,(\delta_{x_1}\odot\dots\odot\delta_{x_i}),$$
which implies that the right hand side of \eqref{xtsw5w} becomes \eqref{cxsesa53w}.
\end{proof}

\begin{remark}
In the case of a single-point space $X$, Proposition~\ref{dtrsews3a} implies 
$$\sum_{k=1}^nS(k+m,i)s(n,k)=\begin{cases}
0,&\text{if either $i<n$ or $i>n+m$},\\
i^m,&\text{if $i=n$},\\
\sum_{l=n}^i(-1)^{i+l}\frac{l^m}{(i-l)!\,(l-n)!},&\text{if }i=n+1,\dots,n+m.
\end{cases}$$
The case $i\le n$ is attributed in \cite[Section 12.2]{QG} to Frank Olson (1963). The case $i=n+1,\dots,n+m$ does not seem to have been considered before.
\end{remark}

\begin{remark} For a fixed $f^{(l)}$ as in Proposition~\ref{dtrsews3a}, denote
$$g^{(i)}:=\sum_{k=1}^n \SO(k+m,i)P_{m+k}\big(\mathbf 1^{(m)}\otimes \so(n,k)\big)f^{(l)}.$$
Then, by Theorem~\ref{rsdtresw45w} and Proposition~\ref{dtrsews3a}, we get
$$\langle (\omega)_n\odot\omega^{\otimes m},f^{(l)}\rangle=\sum_{i=n}^{n+m}\langle(\omega)_i,g^{(i)}\rangle.$$
\end{remark}

\begin{proposition} For any $n\in\mathbb N$, $i=0,1,\dots,n$, and $j=0,1,\dots,n-i$,
\begin{equation}\label{ceraq435q}
\binom{i+j}i\so(n,i+j)=\sum_{k=i}^{n-j}\binom nk P_{i+j}\big(\so(k,i)\otimes\so(n-k,j)\big).\end{equation}
In formula \eqref{ceraq435q}, the Stirling operators of the first kind can be replaced with the Stirling operators of the second kind. 
\end{proposition}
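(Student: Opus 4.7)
The plan is to derive \eqref{ceraq435q} by multiplying the generating functions of the Stirling operators of the first kind from Proposition~\ref{cesseas}, equating coefficients of $\xi^{\otimes n}$, and then promoting the resulting identity on rank-one tensors to a full operator identity via Remark~\ref{vyd6wq4a24} together with the canonical extension procedure of Remark~\ref{ts5as2}.

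Concretely, I would start from \eqref{dre6ew6ww} and take the ordered tensor product of the generating series for $\so(\cdot,i)$ and $\so(\cdot,j)$. Using $(\log(1+\xi))^{\otimes i}\otimes(\log(1+\xi))^{\otimes j}=(\log(1+\xi))^{\otimes(i+j)}$, this gives, as an equality of formal power series in $\xi\in\mathcal F(X)$,
\begin{align*}
&\Bigl(\sum_{k=i}^{\infty}\frac{1}{k!}\,\so(k,i)\xi^{\otimes k}\Bigr)\otimes\Bigl(\sum_{l=j}^{\infty}\frac{1}{l!}\,\so(l,j)\xi^{\otimes l}\Bigr)\\
&\qquad=\frac{1}{i!\,j!}\,(\log(1+\xi))^{\otimes(i+j)}=\binom{i+j}{i}\sum_{n=i+j}^{\infty}\frac{1}{n!}\,\so(n,i+j)\xi^{\otimes n}.
\end{align*}
Re-indexing the left-hand side by $n=k+l$ and equating the degree-$n$ terms, then multiplying through by $n!$, yields
$$\binom{i+j}{i}\,\so(n,i+j)\xi^{\otimes n}=\sum_{k=i}^{n-j}\binom{n}{k}\,\so(k,i)\xi^{\otimes k}\otimes\so(n-k,j)\xi^{\otimes(n-k)}.$$

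The only delicate step is the symmetrization. The left-hand side lies in $\mathcal F^{(i+j)}(X)$, so $P_{i+j}$ acts on it as the identity, whereas the tensor $\so(k,i)\xi^{\otimes k}\otimes\so(n-k,j)\xi^{\otimes(n-k)}$ is only separately symmetric in its first $i$ and last $j$ arguments. Applying $P_{i+j}$ to both sides therefore turns the previous display into
$$\binom{i+j}{i}\,\so(n,i+j)\xi^{\otimes n}=\sum_{k=i}^{n-j}\binom{n}{k}\,P_{i+j}\bigl(\so(k,i)\otimes\so(n-k,j)\bigr)\xi^{\otimes n},\qquad\xi\in\mathcal{CF}(X).$$

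To conclude, I would invoke Remark~\ref{vyd6wq4a24}: both sides, viewed as continuous operators in $\mathcal L(\mathcal{CF}^{(n)}(X),\mathcal{CF}^{(i+j)}(X))$, are determined by their values on $\xi^{\otimes n}$ with $\xi\in\mathcal{CF}(X)$, hence they coincide as operators on $\mathcal{CF}^{(n)}(X)$; the canonical extension of Remark~\ref{ts5as2} then gives the same identity on $\mathcal F^{(n)}(X)$, which is \eqref{ceraq435q}. The proof for the $\SO$-version is identical, using \eqref{xtsaer5aw5} and $e^{\xi}-1$ in place of $\log(1+\xi)$, and the edge cases $i=0$ or $j=0$ are automatically absorbed by the conventions of Remark~\ref{rtsdqwsuqwds6qe}. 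The main (and essentially only) obstacle is the symmetrization step: one must observe that the identity on rank-one tensors needs no symmetrization on the left but genuinely requires $P_{i+j}$ on the right, and justify that this insertion is harmless by the symmetry of the left-hand side.
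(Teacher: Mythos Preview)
Your proof is correct and takes a genuinely different route from the paper's. The paper expands $\langle(\omega+\sigma)_n,f^{(n)}\rangle$ in two ways: once via the binomial property \eqref{5w53w2} followed by the Stirling expansion \eqref{yrdqs} of each factor, and once by applying \eqref{yrdqs} directly to $\omega+\sigma$ and then the ordinary binomial expansion of $(\omega+\sigma)^{\otimes k}$. It then specializes $\omega=u\varkappa$, $\sigma=v\varkappa$ and reads off the coefficient of $u^iv^j$. Your argument bypasses the measures entirely, working instead with the generating functions \eqref{dre6ew6ww} and the multiplicativity of $(\log(1+\xi))^{\otimes i}\otimes(\log(1+\xi))^{\otimes j}=(\log(1+\xi))^{\otimes(i+j)}$. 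This is shorter and more transparently algebraic; the paper's approach, on the other hand, makes the role of the binomial property \eqref{5w53w2} explicit, which is the theme of Section~\ref{tes5w}. One small remark: in your symmetrization step you could note that the pre-symmetrized identity already holds pointwise on $X^{i+j}$ (both sides equal the $z^n$-coefficient of $\frac{1}{i!j!}(\log(1+z\xi))^{\otimes(i+j)}$, which is visibly symmetric), so applying $P_{i+j}$ is not merely ``harmless'' but in fact redundant on the right-hand side as well; this slightly sharpens the justification.
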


\begin{proof} To simplify the notation, we assume that $f^{(n)}\in\mathcal F^{(n)}(X)$ is of the form $f^{(n)}=\varphi^{\otimes n}$ with $\varphi\in\mathcal F(X)$. By \eqref{5w53w2} and \eqref{yrdqs}, we have,  for any $\omega,\sigma\in M(X)$,\begin{align}
\langle(\omega+\sigma)_n,f^{(n)}\rangle&=\sum_{k=0}^n\binom nk\langle(\omega)_k,\varphi^{\otimes k}\rangle\langle(\sigma)_{n-k},\varphi^{\otimes(n-k)}\rangle\notag\\
&=\sum_{k=0}^n\binom nk\sum_{i=0}^k\langle\omega^{\otimes i},\so(k,i)\varphi^{\otimes k}\rangle\sum_{j=0}^{n-k}\langle\sigma^{\otimes j},\so(n-k,j)\varphi^{\otimes(n-k)}\rangle\notag\\
&=\sum_{i=0}^n\sum_{j=0}^{n-i}\sum_{k=i}^{n-j}\binom nk \langle\omega^{\otimes i},\so(k,i)\varphi^{\otimes k}\rangle\langle\sigma^{\otimes j},\so(n-k,j)\varphi^{\otimes(n-k)}\rangle\notag\\
&=\sum_{i=0}^n\sum_{j=0}^{n-i}\Big\langle\omega^{\otimes i}\otimes\sigma^{\otimes j},\sum_{k=i}^{n-j}\binom nk \big(\so(k,i)\otimes\so(n-k,j)\big)f^{(n)}\Big\rangle.\label{cdtw5w}
\end{align}
On the other hand,
\begin{align}
\langle(\omega+\sigma)_n,f^{(n)}\rangle&=
\sum_{k=0}^n\langle(\omega+\sigma)^{\otimes k},\so(n,k)f^{(n)}\rangle\notag\\
&=\sum_{k=0}^n\sum_{i=0}^k\binom ki\langle\omega^{\otimes i}\odot\sigma^{\otimes(k-i)},\so(n,k)f^{(n)}\rangle\notag\\
&=\sum_{i=0}^n\sum_{j=0}^{n-i}\Big\langle\omega^{\otimes i}\odot\sigma^{\otimes j},\binom{i+j}i\so(n,i+j)f^{(n)}\Big\rangle.\label{sea3qa}
\end{align}
Let $u,v\in\mathbb F$, $\varkappa\in M(X)$ and set in formulas \eqref{cdtw5w}, \eqref{sea3qa} $\omega=u\varkappa$ and $\sigma=v\varkappa$. This gives
\begin{align*}
&\sum_{i=0}^n\sum_{j=0}^{n-i} u^iv^j \Big\langle\varkappa^{\otimes (i+j)},\sum_{k=i}^{n-j}\binom nk P_{i+j}\big[\so(k,i)\otimes\so(n-k,j)\big]f^{(n)}\Big\rangle\\
&\quad=\sum_{i=0}^n\sum_{j=0}^{n-i} u^iv^j \Big\langle\varkappa^{\otimes (i+j)},\binom{i+j}i\so(n,i+j)f^{(n)}\Big\rangle.
\end{align*}
This implies formula \eqref{ceraq435q}. The proof for the Stirling operators of the second kind is similar. One just uses formulas \eqref{5w53w22} and \eqref{fdr6ew456u4w} instead of \eqref{5w53w2} and \eqref{yrdqs}, respectively.  
\end{proof}

Recall that, in Subsection \ref{fdsters5w5}, we defined $\mathcal F(\ddot\Gamma_0(X))$ as the algebraic direct sum of the $\mathcal F^{(n)}(X)$ spaces. For each $x\in X$, we define a linear operator $\mathcal D_x\in\mathcal L(\mathcal F(\ddot\Gamma_0(X)))$  that maps each $\mathcal F^{(n)}(X)$ into $\mathcal F^{(n-1)}(X)$ so that, for each $f^{(n)}\in\mathcal F^{(n)}(X)$, we have 
$(\mathcal D_xf^{(n)})(\cdot):=nf^{(n)}(x,\cdot)$. Note that, by \eqref{ydsrqqsqwdx}, $\partial_x\langle\omega^{\otimes n},f^{(n)}\rangle=\langle\omega^{\otimes(n-1)},\mathcal D_xf^{(n)}\rangle$.

\begin{proposition}\label{drtes5w3}
For each $n,i\in\mathbb N$, $i<n$, and $f^{(n)}\in\mathcal F^{(n)}(X)$, 
\begin{align}\label{ts6uw65uxz}
\sum_{k=1}^{n-i}\frac1{k!}\, (-\mathcal D_x)^k\,\so(n,i+k)f^{(n)}&=\sum_{k=1}^{n-i}\so(n-k,i)(-\mathcal D_x)^kf^{(n)},\\
\sum_{k=1}^{n-i}(-\mathcal D_x)^k\,\SO(n,i+k)f^{(n)}&=\sum_{k=1}^{n-i}\frac1{k!}\,\SO(n-k,i)(-\mathcal D_x)^kf^{(n)}.\label{utfrtrz}
\end{align}
\end{proposition}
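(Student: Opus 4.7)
The strategy is to compute a polynomial in $\omega$, evaluated at the shifted signed measure $\omega-\delta_x\in M(X)$, in two different ways, and then equate coefficients. For the first identity I would use $\langle(\omega-\delta_x)_n, f^{(n)}\rangle$; for the second, $\langle(\omega-\delta_x)^{\otimes n}, f^{(n)}\rangle$.

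The main computational ingredient is the identity $(-\delta_x)_m = (-1)^m m!\,\delta_x^{\otimes m}$. This follows from \eqref{ctsrea4w} together with $(\delta_x)^{(m)} = m!\,\delta_x^{\otimes m}$, which is immediate from \eqref{fdeseas} by induction (once $x_1=\dots=x_{k-1}=x$, the $k$-th factor reduces to $k\delta_x$). Combined with the identification $\mathcal D_x^m g^{(n)} = \frac{n!}{(n-m)!}\,g^{(n)}(x,\dots,x,\cdot)$, this lets every appearance of $\delta_x^{\otimes m}$ inside an inner product be traded for an application of $\mathcal D_x^m$ on the test function; the factor $\frac{n!}{(n-m)!}$ is exactly what cancels the surviving binomial coefficients.

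For the first identity, applying \eqref{5w53w2} with $\sigma=-\delta_x$ and simplifying via the above reductions gives
\[
\langle(\omega-\delta_x)_n,f^{(n)}\rangle \;=\; \sum_{j=0}^n\langle(\omega)_{n-j},(-\mathcal D_x)^j f^{(n)}\rangle.
\]
Expanding each $(\omega)_{n-j}$ via \eqref{yrdqs} yields the coefficient of $\omega^{\otimes i}$ as $\sum_{j=0}^{n-i}\so(n-j,i)(-\mathcal D_x)^j f^{(n)}$. On the other hand, applying \eqref{yrdqs} first and then using the ordinary binomial expansion $(\omega-\delta_x)^{\otimes i}=\sum_{j=0}^i\binom{i}{j}(-1)^j\omega^{\otimes(i-j)}\odot\delta_x^{\otimes j}$ with the same $\mathcal D_x$ substitution gives the coefficient of $\omega^{\otimes i}$ as $\sum_{k=0}^{n-i}\frac{1}{k!}(-\mathcal D_x)^k\so(n,i+k)f^{(n)}$. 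Equating the two expressions and cancelling the common $k=0$ (equivalently $j=0$) term $\so(n,i)f^{(n)}$ yields \eqref{ts6uw65uxz}.

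The second identity is obtained by the analogous dual computation on $\langle(\omega-\delta_x)^{\otimes n},f^{(n)}\rangle$. The ordinary binomial expansion of $(\omega-\delta_x)^{\otimes n}$ combined with \eqref{fdr6ew456u4w} gives the coefficient of $(\omega)_i$ as $\sum_{j=0}^{n-i}\frac{1}{j!}\SO(n-j,i)(-\mathcal D_x)^j f^{(n)}$; the opposite order, expanding first via \eqref{fdr6ew456u4w} and then applying \eqref{5w53w2} to $(\omega-\delta_x)_i$, produces coefficient $\sum_{k=0}^{n-i}(-\mathcal D_x)^k\SO(n,i+k)f^{(n)}$. Cancelling the common $\SO(n,i)f^{(n)}$ term yields \eqref{utfrtrz}. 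The only real obstacle in the entire argument is the index-shift bookkeeping across the four double sums; the two key identities highlighted in the previous paragraph are exactly tuned so that every binomial coefficient collapses, and no new analytic input is needed beyond the binomial property of the falling factorial and the defining relations \eqref{yrdqs}--\eqref{fdr6ew456u4w} of the Stirling operators.
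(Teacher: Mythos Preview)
Your proposal is correct and follows essentially the same approach as the paper: compute $\langle(\omega-\delta_x)_n,f^{(n)}\rangle$ (respectively $\langle(\omega-\delta_x)^{\otimes n},f^{(n)}\rangle$) in two ways using the binomial property \eqref{5w53w2} together with $(-\delta_x)_m=(-1)^m m!\,\delta_x^{\otimes m}$, and compare coefficients of $\omega^{\otimes i}$ (respectively $(\omega)_i$). The only cosmetic difference is that the paper carries out the computation for $f^{(n)}=\varphi^{\otimes n}$ to lighten notation, whereas you keep $f^{(n)}$ general and absorb the bookkeeping into the identity $\mathcal D_x^m g^{(n)}=(n)_m\,g^{(n)}(x,\dots,x,\cdot)$.
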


\begin{proof} To simplify the notation, we assume that $f^{(n)}\in\mathcal F^{(n)}(X)$ is of the form $f^{(n)}=\varphi^{\otimes n}$ with $\varphi\in\mathcal F(X)$. By \eqref{xesa332a}, we get $(-\delta_x)_n=(-1)^nn!\,\delta_x^{\otimes n}$. Hence, by \eqref{5w53w2}, for each $\omega\in M(X)$,
$$(\omega-\delta_x)_n=\sum_{k=0}^n (-1)^{n-k}\,(n)_{n-k}\, (\omega)_k\odot\delta_x^{\otimes(n-k)},$$
which implies
\begin{align}
\langle (\omega-\delta_x)_n,f^{(n)}\rangle&=
\sum_{k=0}^n(-1)^{n-k}(n)_{n-k}\,\langle(\omega)_k,\varphi^{\otimes k}\rangle\,\varphi^{n-k}(x)\notag\\
&=\sum_{k=0}^n(-1)^{n-k}(n)_{n-k}\,\sum_{i=0}^k\langle\omega^{\otimes i},\so(k,i)\varphi^{\otimes k}\rangle\,\varphi^{n-k}(x)\notag\\
&=\sum_{k=0}^n\sum_{i=0}^k\langle\omega^{\otimes i},\so(k,i)(-\mathcal D_x)^{n-k}f^{(n)}\rangle\notag\\
&=\sum_{i=0}^n\Big\langle\omega^{\otimes i},\sum_{k=i}^n \so(k,i)(-\mathcal D_x)^{n-k}f^{(n)}\Big\rangle\notag\\
&=\sum_{i=0}^n\Big\langle\omega^{\otimes i},\so(n,i)f^{(n)}+\sum_{k=1}^{n-i}\so(n-k,i)(-\mathcal D_x)^{k}f^{(n)}\Big\rangle.
\label{w54qw5}\end{align}
On the other hand,
\begin{align}
\langle (\omega-\delta_x)_n,f^{(n)}\rangle&=\sum_{k=0}^n\langle(\omega-\delta_x)^{\otimes k},\so(n,k)f^{(n)}\rangle\notag\\
&=\sum_{k=0}^n\sum_{i=0}^k\binom ki\langle \omega^{\otimes i}\odot (-\delta_x)^{\otimes(k-i)},\so(n,k)f^{(n)}\rangle\notag\\
&=\sum_{k=0}^n\sum_{i=0}^k\frac1{(k-i)!}\,\langle\omega^{\otimes i},(-\mathcal D_x)^{k-i}\so(n,k)f^{(n)}\rangle\notag\\
&=\sum_{i=0}^n\Big\langle\omega^{\otimes i},\so(n,i)f^{(n)}+\sum_{k=1}^{n-i} \frac1{k!}\, (-\mathcal D_x)^{k}\so(n,i+k)f^{(n)}\Big\rangle.\label{rsw553q5}
\end{align}
Now formula \eqref{ts6uw65uxz} follows from \eqref{w54qw5} and \eqref{rsw553q5}. The proof of \eqref{utfrtrz} is similar, one just starts with the polynomial $\langle(\omega-\delta_x)^{\otimes n},f^{(n)}\rangle$.
\end{proof}

\begin{remark}
In the case of a single-point space $X$, Proposition~\ref{drtes5w3} gives the following identities:
\begin{align}
\sum_{k=1}^{n-i}(-1)^k\binom{i+k}{k}s(n,i+k)&=\sum_{k=1}^{n-i}(-1)^k (n)_k\, s(n-k,i),\label{ds5w53wq}\\
\sum_{k=1}^{n-i}(-1)^k(i+k)_k\,S(n,i+k)&=\sum_{k=1}^{n-i}(-1)^k\binom nk S(n-k,i).\notag
\end{align}
Formula \eqref{ds5w53wq} is the equality (12.15) in \cite{QG}. 
\end{remark}

\begin{remark}
In equalities \eqref{ceraq435q} and \eqref{ts6uw65uxz},  the Stirling operators of the first kind can be replaced with the Lah operators.
\end{remark}

\section{Poisson functional and Stirling operators}\label{ufxdqyr}

Let $\omega\in M(X)$ be a positive Radon measure and assume that $\omega$ is non-atomic, i.e., $\omega(\{x\})=0$ for all $x\in X$. Then one can define a Poisson point process on $X$ with intensity measure $\omega$, which is a probability measure on the configuration space $\Gamma(X)$ equipped with the cylinder $\sigma$-algebra, see e.g.\ \cite{Kingman}. Denote by $\mathbb E_\omega$ the expectation with respect to this point process. The restriction of each polynomial $p\in\mathcal P(M(X))$ to $\Gamma(X)$ is a random variable on $\Gamma(X)$. Let $p\in \mathcal P(M(X))$ be of the form $p(\omega)=\sum_{k=0}^n\langle\omega^{\otimes k},f^{(k)}\rangle$, and choose a set $\Lambda\in\mathcal B_0(X)$ such that, for each $k=1,\dots,n$, the function $f^{(k)}$ vanishes outside $\Lambda^k$. Then, it follows from the definition of the Poisson point process that
\begin{equation}\label{ufyde}
\mathbb E_\omega(p)=e^{-\omega(\Lambda)}\sum_{n=0}^\infty\frac1{n!}\int_{\Lambda^n}p([x_1,\dots,x_n])\,\omega^{\otimes n}(dx_1\dotsm dx_n).\end{equation}
(In the sum on the right-hand side of formula \eqref{ufyde}, the term corresponding to $n=0$ is meant to be $p(0)$.) Note that the value of the right hand side of formula \eqref{ufyde}  does not depend on the choice of the set $\Lambda$. Furthermore, since the measure $\omega$ is non-atomic, for $\omega^{\otimes n}$-a.a.\ $(x_1,\dots,x_n)\in \Lambda^n$, $[x_1,\dots,x_n]\in \Gamma_0(X)$. 

From now on, we assume that $\omega\in M(X)$ is arbitrary. We define a  (linear) {\it Poisson functional\/  $\mathbb E_\omega:\mathcal P(M(X))\to\R$ with intensity measure $\omega$} by formula \eqref{ufyde}, provided the set $\Lambda$ satisfies the above assumption that each $f^{(k)}$ vanishes outside $\Lambda^k$. In Appendix,  we prove that the value of the right-hand side of formula~\eqref{ufyde} still does not depend on the choice of such a set $\Lambda$.  
Furthermore, we discuss in Appendix several properties of the Poisson functional which generalize the corresponding facts about the Poisson point process.

\begin{theorem} \label{e6ew63} Let $f^{(n)}\in\mathcal F^{(n)}(X)$ and let $p(\omega)=\langle\omega^{\otimes n},f^{(n)}\rangle$,  $\omega\in M(X)$. Then
$$\mathbb E_\omega(p)=\sum_{k=1}^n\langle\omega^{\otimes k},\SO(n,k)f^{(n)}\rangle.$$
\end{theorem}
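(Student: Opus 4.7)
My strategy is to substitute $p(\omega)=\langle\omega^{\otimes n},f^{(n)}\rangle$ into the defining formula \eqref{ufyde} of the Poisson functional, then use the Stirling identity \eqref{fdr6ew456u4w} applied \emph{inside the integrand} to the finite multiset measure $\delta_{x_1}+\dots+\delta_{x_m}$, and finally carry out the resulting integration and resum the series in $m$. Choose $\Lambda\in\mathcal B_0(X)$ such that $f^{(n)}$ (and hence $\mathbf S(n,k)f^{(n)}$ for every $k$) vanishes outside $\Lambda^k$, so that \eqref{ufyde} applies.

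The first key step is to identify the multiset falling factorial explicitly. For $\omega'=\delta_{x_1}+\dots+\delta_{x_m}$, either by induction on $m$ using the lowering formula \eqref{s5wuwu}, by the binomial property \eqref{5w53w2} (noting $(\delta_x)_j=0$ for $j\ge2$), or directly from \eqref{xesa332a}, one obtains exactly as in \eqref{rts5w53w}
\[
(\omega')_k=\sum_{(j_1,\dots,j_k)}\delta_{x_{j_1}}\otimes\dots\otimes\delta_{x_{j_k}},
\]
where the sum ranges over ordered $k$-tuples of pairwise distinct indices from $\{1,\dots,m\}$ (so this measure is zero when $k>m$). Applying \eqref{fdr6ew456u4w} with this $\omega'$ yields
\[
p([x_1,\dots,x_m])=\langle(\omega')^{\otimes n},f^{(n)}\rangle=\sum_{k=1}^n\sum_{(j_1,\dots,j_k)}\bigl(\mathbf S(n,k)f^{(n)}\bigr)(x_{j_1},\dots,x_{j_k}).
\]

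Now substitute into \eqref{ufyde}. For each fixed $k$ and each fixed ordered tuple $(j_1,\dots,j_k)$ of $k$ distinct indices in $\{1,\dots,m\}$, integrating the symmetric function $\bigl(\mathbf S(n,k)f^{(n)}\bigr)(x_{j_1},\dots,x_{j_k})$ against $\omega^{\otimes m}$ over $\Lambda^m$ factors as $\omega(\Lambda)^{m-k}\langle\omega^{\otimes k},\mathbf S(n,k)f^{(n)}\rangle$ (the remaining $m-k$ free variables contribute $\omega(\Lambda)^{m-k}$). Since the number of such ordered tuples is $(m)_k=m!/(m-k)!$, exchanging the finite sum over $k\in\{1,\dots,n\}$ with the sum over $m$ gives
\[
\mathbb E_\omega(p)=e^{-\omega(\Lambda)}\sum_{k=1}^n\langle\omega^{\otimes k},\mathbf S(n,k)f^{(n)}\rangle\sum_{m=k}^\infty\frac{\omega(\Lambda)^{m-k}}{(m-k)!}=\sum_{k=1}^n\langle\omega^{\otimes k},\mathbf S(n,k)f^{(n)}\rangle,
\]
as desired.

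The computation itself is routine; the only subtlety is conceptual, namely that $\omega$ is an arbitrary (possibly complex-valued, possibly with large mass on $\Lambda$) Radon measure rather than a non-atomic positive one, so I cannot invoke the usual probabilistic Poisson reasoning. However, the inner sum over $k$ is finite and the series in $m$ converges absolutely as an exponential, so no convergence issues arise; independence of the choice of $\Lambda$ is deferred to the Appendix. The main thing to watch is the bookkeeping when $m<k$: here the inner sum over ordered $k$-tuples of distinct indices is empty, so those terms drop out automatically, which is exactly why the $m$-series starts at $m=k$ and produces the clean $e^{\omega(\Lambda)}$ factor that cancels the prefactor.
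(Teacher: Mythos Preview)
Your proof is correct. It takes a genuinely different route from the paper's argument.

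The paper reformulates the claim as: for every $g^{(k)}\in\mathcal F^{(k)}(X)$, the falling factorial $q(\omega)=\langle(\omega)_k,g^{(k)}\rangle$ satisfies $\mathbb E_\omega(q)=\langle\omega^{\otimes k},g^{(k)}\rangle$, and proves this by induction on $k$ using the Mecke identity (Proposition~\ref{g6ew4}). You instead apply the Stirling expansion \eqref{fdr6ew456u4w} \emph{inside} the defining series \eqref{ufyde}, write out $(\delta_{x_1}+\dots+\delta_{x_m})_k$ explicitly as a sum over ordered tuples of distinct indices, integrate term by term, and resum. In effect both proofs reduce to computing $\mathbb E_\omega$ on falling factorials; you do it by a single direct calculation, the paper by an inductive argument leaning on the Mecke identity. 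Your approach is more elementary and self-contained (it does not invoke Proposition~\ref{g6ew4}); the paper's approach is more structural and yields the umbral-operator interpretation in the subsequent remark with no extra work. The absolute-convergence justification you give for the swap of $\sum_m$ and $\sum_k$ is adequate, since each term is bounded by $\|\SO(n,k)f^{(n)}\|_\infty\,|\omega|(\Lambda)^m/(m-k)!$ and the resulting double sum is finite.
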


\begin{proof}
We need equivalently to prove that, for each falling factorial $p\in\mathcal P(M(X))$ of the form $p(\omega)=\langle(\omega)_k,g^{(k)}\rangle$ with $g^{(k)}\in\mathcal F^{(k)}(X)$, we have $\mathbb E_\omega(p)=\langle\omega^{\otimes k},g^{(k)}\rangle$. We prove the statement by induction on $k$. For $k=1$, the result follows immediately from Proposition~\ref{g6ew4}.

Assume that the statement holds for $k$ and let us prove it for $k+1$. Choose $\Lambda\in\mathcal B_0(X)$ such that $g^{(k+1)}$ vanishes outside of $\Lambda^{k+1}$. Let $\eta\in\ddot\Gamma_0(\Lambda)$, $\eta=[x_1,\dots,x_m]$, $m\in\mathbb N$. By \eqref{xesa332a},
\begin{align}
\langle(\eta)_{k+1},g^{(k+1)}\rangle&=\sum_{i_1\in\{1,\dots,m\}}\,\sum_{i_2\in\{1,\dots,m\}\setminus\{i_1\}}\dotsm \sum_{i_{k+1}\in\{1,\dots,m\}\setminus\{i_1,\dots,i_{k}\}}g^{(k+1)}(x_{i_1},\dots,x_{i_{k+1}})\notag\\
&=\int_\Lambda\eta(dx)\int_{\Lambda^k}(\eta-\delta_x)_k(dx_1\dotsm dx_k)g^{(k+1)}(x,x_1,\dots,x_k).\label{crse3aq3}
\end{align}
By \eqref{crse3aq3}, Proposition \ref{g6ew4}, and the induction assumption, we have, for $p(\omega)=\langle(\omega)_{k+1},g^{(k+1)}\rangle$,
\begin{align*}
\mathbb E_\omega(p)&=\int_{\ddot\Gamma_0(\Lambda)}\mathbb P_\omega^\Lambda(d\eta)\int_\Lambda \eta(dx)\int_{\Lambda^k}(\eta-\delta_x)_{k}(dx_1\dotsm dx_k)g^{(k+1)}(x,x_1,\dots,x_k)\\
&=\int_{\ddot\Gamma_0(\Lambda)}\mathbb P_\omega^\Lambda(d\eta)\int_\Lambda \omega(dx)\int_{\Lambda^k}(\eta)_k(dx_1\dotsm dx_k)g^{(k+1)}(x,x_1,\dots,x_k)\\
&=\int_\Lambda \omega(dx)\int_{\ddot\Gamma_0(\Lambda)}\mathbb P_\omega^\Lambda(d\eta)\int_{\Lambda^k}(\eta)_k(dx_1\dotsm dx_k)g^{(k+1)}(x,x_1,\dots,x_k)\\
&=\langle\omega^{\otimes(k+1)},g^{(k+1)}\rangle.\qedhere
\end{align*}
\end{proof}

\begin{remark} Consider a linear operator $U\in\mathcal L(\mathcal P(M(X)))$ defined by $(Up)(\omega):=\mathbb E_\omega(p)$ for $p\in\mathcal P(M(X))$. It follows from the proof of Theorem~\ref{e6ew63} that $U$ maps each falling factorial $\langle(\omega)_k,g^{(k)}\rangle$ into the monomial $\langle\omega^{\otimes k},g^{(k)}\rangle$. Note that  both the falling factorials and the monomials have the binomial property. Hence, by analogy with umbral calculus in dimension one (e.g.\ \cite{Roman}), we can think of $U$ as an {\it umbral operator}.
\end{remark}

\begin{corollary}\label{f6we5w}
Let $p\in \mathcal P(M(X))$ be of the form $p(\omega)=\sum_{k=0}^n\langle\omega^{\otimes k},f^{(k)}\rangle$, and choose a set $\Lambda\in\mathcal B_0(X)$ such that, for each $k=1,\dots,n$, the function $f^{(k)}$ vanishes outside $\Lambda^k$. Then 
$$\mathbb E_\omega(p)=\sum_{i=0}^n\frac1{i!}\int_{\Lambda^i}p([x_1,\dots,x_{i}])\,\omega^{\otimes i}(dx_1\dotsm dx_{i}) \sum_{k=0}^{n-i} \frac{\big(-\omega(\Lambda)\big)^{k}}{k!}.$$
\end{corollary}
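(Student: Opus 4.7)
The plan is to prove that the claimed finite sum equals $\mathbb E_\omega(p)$ by funneling both expressions through the quantities
$$h_0 := f^{(0)},\qquad h_r := \sum_{k=r}^n \langle \omega^{\otimes r}, \SO(k,r) f^{(k)}\rangle \quad (1\le r\le n).$$
By linearity of $\mathbb E_\omega$ and Theorem~\ref{e6ew63} applied monomial by monomial, the left-hand side satisfies $\mathbb E_\omega(p) = \sum_{r=0}^n h_r$. Setting $I_m:=\frac{1}{m!}\int_{\Lambda^m}p([x_1,\dots,x_m])\,\omega^{\otimes m}(dx)$, the goal is to establish the expansion
$$I_m \;=\; \sum_{r=0}^m \frac{\omega(\Lambda)^{m-r}}{(m-r)!}\,h_r,$$
which will reduce the corollary to a simple algebraic identity.

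To derive this expansion I fix a single monomial $\langle\omega^{\otimes k},f^{(k)}\rangle$ of $p$ and expand $\langle\eta^{\otimes k},f^{(k)}\rangle=\sum_{(a_1,\dots,a_k)\in\{1,\dots,m\}^k} f^{(k)}(x_{a_1},\dots,x_{a_k})$ for $\eta=\delta_{x_1}+\dots+\delta_{x_m}$. Each tuple $(a_1,\dots,a_k)$ is classified by the partition $\pi\in\operatorname{UP}(k,r)$ whose blocks are the level sets of $j\mapsto a_j$, and an elementary count gives exactly $(m)_r$ tuples per such $\pi$. Integrating against $\omega^{\otimes m}$, the $m-r$ unused variables contribute a factor $\omega(\Lambda)^{m-r}$, while the remaining $r$ variables yield $\langle\omega^{\otimes r},\mathbb D^{(k)}_\pi f^{(k)}\rangle$; here the symmetry of $\omega^{\otimes r}$ lets me replace the un-symmetrised function built from $f^{(k)}$ by its symmetrisation $\mathbb D^{(k)}_\pi f^{(k)}$. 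Summing over $\pi\in\operatorname{UP}(k,r)$ via the identity $\SO(k,r)=\sum_{\pi\in\operatorname{UP}(k,r)}\mathbb D^{(k)}_\pi$ from~\eqref{xsa43wqa4q}, and then over all monomials of $p$, yields the required formula for $I_m$.

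Substituting this expansion into the right-hand side of the corollary and swapping the order of summation (indexing by $r$, $l:=i-r$, and $k$) transforms it into
$$\sum_{r=0}^n h_r\sum_{m=0}^{n-r}\frac{1}{m!}\sum_{l+k=m}\binom{m}{l}\omega(\Lambda)^l(-\omega(\Lambda))^k \;=\; \sum_{r=0}^n h_r\sum_{m=0}^{n-r}\frac{(\omega(\Lambda)-\omega(\Lambda))^m}{m!} \;=\; \sum_{r=0}^n h_r,$$
since the binomial theorem collapses each inner sum to $0^m$, so only the $m=0$ term survives. Comparing with the first paragraph finishes the proof.

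The main technical step is the combinatorial derivation of the expansion for $I_m$, particularly the clean count of tuples per partition and the identification of the resulting sums as $\langle\omega^{\otimes r},\SO(k,r)f^{(k)}\rangle$ through~\eqref{xsa43wqa4q}; once this is in hand, the cancellation via the binomial theorem is automatic.
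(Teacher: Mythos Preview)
Your argument is correct. Both approaches pivot on Theorem~\ref{e6ew63} (so that $\mathbb E_\omega(p)=\sum_{r=0}^n h_r$ in your notation), but the technical cores differ. The paper does not compute the integrals $I_m$ at all: it starts from the other side, writing $\mathbb E_\omega(p)=\sum_{k}\langle\omega^{\otimes k},g^{(k)}\rangle$ with the coefficients $g^{(k)}$ supplied by Euler's formula~\eqref{rtdesse} (Theorem~\ref{rsdtresw45w}), then expands the alternating sum over submultisets $\eta\subseteq[x_1,\dots,x_k]$, integrates out the unused variables to produce the powers of $\omega(\Lambda)$, and reindexes. Your route replaces the appeal to Theorem~\ref{rsdtresw45w} by a direct combinatorial expansion of $\langle\eta^{\otimes k},f^{(k)}\rangle$ according to the level-set partition of a $k$-tuple, which you then recognise as $\SO(k,r)$ via~\eqref{xsa43wqa4q}; the resulting identity $I_m=\sum_{r\le m}\frac{\omega(\Lambda)^{m-r}}{(m-r)!}h_r$ reduces the statement to a binomial cancellation. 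The paper's proof is shorter and showcases Euler's formula as the organising principle, while yours is self-contained (modulo Proposition~\ref{a4w5qwgb}) and makes explicit the combinatorial mechanism behind the factors $\omega(\Lambda)^{m-r}$; either way the final collapse $(\omega(\Lambda)-\omega(\Lambda))^m=0^m$ is the same.
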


\begin{proof} By Theorem \ref{rsdtresw45w} and (the proof of) Theorem~\ref{e6ew63}, we have
\begin{align*}
\mathbb E_\omega(p)&=\sum_{k=0}^n\frac{(-1)^k}{k!}\int_{\Lambda^k}\sum_{i=0}^k(-1)^i\sum_{\{l_1,\dots,l_i\}\subset\{1,\dots,k\}}p([x_{l_1},\dots,x_{l_i}])\,\omega^{\otimes k}(dx_1\dotsm dx_k)\\
&=\sum_{k=0}^n\sum_{i=0}^k\frac{(-1)^{k-i}}{k!}\binom ki
\int_{\Lambda^k}p([x_1,\dots,x_i])\,\omega^{\otimes k}(dx_1\dotsm dx_k)\\
&=\sum_{k=0}^n\sum_{i=0}^k\frac{(-1)^{k-i}}{i!(k-i)!}\,\omega(\Lambda)^{k-i}
\int_{\Lambda^i}p([x_1,\dots,x_i])\,\omega^{\otimes i}(dx_1\dotsm dx_i),\end{align*}
which implies the statement.
\end{proof}

\begin{remark}
In the case of a single-point space $X$, Corollary \ref{f6we5w} states that, if $p$ is a polynomial on $\mathbb F$ of degree $n$, then for each $z\in\mathbb F$,
$$\sum_{k=0}^\infty\frac{z^k}{k!} \, p(k)=e^z \sum_{i=0}^n \frac{p(i)}{i!}\sum_{k=0}^{n-i}\frac{(-1)^k z^{k+i}}{k!},
$$
which is Theorem~9.2 in \cite{QG}.
\end{remark}

\section{Infinite dimensional Gr\"unert's and Katriel's formulas}\label{gwfdytwyc}

\subsection{Infinite dimensional Gr\"unert's formula}

For each $\xi\in\mathcal F(X)$, we consider the following linear operator on $\mathcal P(M(X))$:
$$\langle\omega,\xi\partial\rangle=\int_X\omega(dx)\xi(x)\partial_x,$$
 where $\partial_x$ is defined by 
\eqref{qdftyfe} and $\omega\in M(X)$ is the variable of the polynomial this operator is applied to. More precisely, by \eqref{ydsrqqsqwdx} and \eqref{cresa5}, for each $f^{(n)}\in\mathcal F^{(n)}(X)$, we get
\begin{equation}\label{fde5w44}
\langle\omega,\xi\partial\rangle\langle\omega^{\otimes n},f^{(n)}\rangle=n\int_X\omega(dx)\xi(x)\langle\omega^{\otimes(n-1)},f^{(n)}(x,\cdot)\rangle=\langle\omega^{\otimes n},N(\xi)f^{(n)}\rangle.
 \end{equation}
 We similarly define, for $f^{(n)}\in\mathcal F^{(n)}(X)$, the operator
 $$\langle\omega^{\otimes n},f^{(n)}\partial^{\otimes n}\rangle=\int_{X^n}\omega^{\otimes n}(dx_1\dotsm dx_n)f^{(n)}(x_1,\dots,x_n)\partial_{x_1}\dotsm\partial_{x_n}.$$

\begin{theorem}[Infinite dimensional Gr\"unert's formula]\label{dwq3q32q}
 For any $\xi_1,\dots,\xi_n\in\mathcal F(X)$,
\begin{equation}
\langle\omega,\xi_1\partial\rangle\dotsm \langle\omega,\xi_n\partial\rangle=\sum_{k=1}^n\langle\omega^{\otimes k},\big(\SO(n,k)(\xi_1\odot\dots\odot\xi_n)\big)\partial^{\otimes k}\rangle.\label{cdrts6ew6}
\end{equation}
\end{theorem}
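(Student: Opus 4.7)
The plan is to proceed by induction on $n$, after a symmetry reduction. First, I would observe that both sides of \eqref{cdrts6ew6} are multilinear in $\xi_1,\dots,\xi_n$: the right-hand side is manifestly symmetric in these arguments, and the left-hand side is symmetric because the operators $\langle\omega,\xi_i\partial\rangle$ pairwise commute. The latter follows from the derivation property of $\partial_x$ on $\mathcal P(M(X))$ together with $\partial_x\langle\omega,\eta\rangle=\eta(x)$ via a direct two-line computation. By polarization, it then suffices to verify the diagonal identity
$$\langle\omega,\xi\partial\rangle^n=\sum_{k=1}^n\langle\omega^{\otimes k},(\SO(n,k)\xi^{\otimes n})\partial^{\otimes k}\rangle$$
for arbitrary $\xi\in\mathcal F(X)$. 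The base case $n=1$ is trivial since $\SO(1,1)=\mathbf 1^{(1)}$.

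The central analytic ingredient for the induction step is the commutation formula
\begin{multline*}
\langle\omega,\xi\partial\rangle\,\langle\omega^{\otimes k},g^{(k)}\partial^{\otimes k}\rangle=\langle\omega^{\otimes k},(N(\xi)g^{(k)})\partial^{\otimes k}\rangle\\
{}+\langle\omega^{\otimes(k+1)},(\xi\odot g^{(k)})\partial^{\otimes(k+1)}\rangle,
\end{multline*}
valid for any symmetric $g^{(k)}\in\mathcal F^{(k)}(X)$, which I would establish as follows. Since $\langle\omega,\xi\partial\rangle$ is a derivation, applying it to $\langle\omega^{\otimes k},g^{(k)}\partial^{\otimes k}\rangle p$ requires differentiating both the measure $\omega^{\otimes k}$ and the polynomial $(\partial_{x_1}\dotsm\partial_{x_k}p)(\omega)$ inside the integral. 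The product rule $\partial_y\omega^{\otimes k}=\sum_{i=1}^k\omega^{\otimes(i-1)}\otimes\delta_y\otimes\omega^{\otimes(k-i)}$ generates $k$ contributions; after the outer integration against $\omega(dy)\xi(y)$ and a relabelling of variables, each becomes the same $k$-fold integral with integrand $\xi(z_1)g^{(k)}(z_1,\dots,z_k)\,\partial_{z_1}\dotsm\partial_{z_k}p$. Symmetrising this unsymmetric integrand against the symmetric measure $\omega^{\otimes k}$ and symmetric operator $\partial^{\otimes k}$ replaces $\xi(z_1)g^{(k)}(z_1,\dots,z_k)$ by $\frac1k N(\xi)g^{(k)}(z_1,\dots,z_k)$, producing the first term. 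Differentiating the polynomial $(\partial^{\otimes k}p)(\omega)$ yields an extra $\partial_y$ which combines with $\omega(dy)\xi(y)$ and the existing $\omega^{\otimes k}$ into $\omega^{\otimes(k+1)}$; symmetry of measure and operator then upgrades $\xi\otimes g^{(k)}$ to $\xi\odot g^{(k)}$, giving the second term.

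Combining the commutation formula with the induction hypothesis and reindexing the $k$-sum,
$$\langle\omega,\xi\partial\rangle^{n+1}=\sum_{k=1}^{n+1}\bigl\langle\omega^{\otimes k},\bigl(N(\xi)\SO(n,k)\xi^{\otimes n}+\xi\odot\SO(n,k-1)\xi^{\otimes n}\bigr)\partial^{\otimes k}\bigr\rangle,$$
so the induction closes provided
$$\SO(n+1,k)\xi^{\otimes(n+1)}=N(\xi)\SO(n,k)\xi^{\otimes n}+\xi\odot\SO(n,k-1)\xi^{\otimes n}.$$
This identity I would derive by applying the Stirling recurrence \eqref{crtw5w} to $\xi^{\otimes(n+1)}$: the first summand of \eqref{crtw5w} simplifies to $P_k(\xi\otimes\SO(n,k-1)\xi^{\otimes n})=\xi\odot\SO(n,k-1)\xi^{\otimes n}$, while the second summand, after $\mathbb D^{(2)}\otimes\mathbf 1^{(k-1)}$ collapses the first two tensor slots of $\xi\otimes\SO(n,k)\xi^{\otimes n}$ to $\xi(z_1)(\SO(n,k)\xi^{\otimes n})(z_1,\dots,z_k)$ and the external factor $k$ combines with $P_k$, produces $\sum_{l=1}^k\xi(z_l)(\SO(n,k)\xi^{\otimes n})=N(\xi)\SO(n,k)\xi^{\otimes n}$.

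The main technical obstacle will be the commutation formula, specifically the symmetrisation step that collapses the $k$ equal contributions arising from the product rule for $\omega^{\otimes k}$ into the single factor $N(\xi)g^{(k)}$; everything else is a routine manipulation built on \eqref{crtw5w} and polarization.
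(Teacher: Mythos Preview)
Your proposal is correct and follows essentially the same route as the paper: the commutation identity you call the ``central analytic ingredient'' is exactly the paper's Lemma~\ref{yd6effd6}, and the induction step closing via the recurrence \eqref{crtw5w} matches the paper's argument. The only cosmetic differences are that the paper works directly with general $\xi_1,\dots,\xi_n$ (using commutativity of the $\langle\omega,\xi_i\partial\rangle$) rather than reducing to the diagonal by polarization, and it verifies the commutation lemma by testing on monomials $\varphi^{\otimes m}$ rather than via your product-rule-plus-symmetrisation argument.
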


\begin{proof} We start with the following

\begin{lemma}\label{yd6effd6} For any $\xi\in\mathcal F(X)$ and $f^{(n)}\in\mathcal F^{(n)}(X)$,
\begin{equation}\notag%\label{rts5eswe}
\langle\omega,\xi\partial\rangle
\langle\omega^{\otimes n},f^{(n)}\partial^{\otimes n}\rangle=\langle\omega^{\otimes n},(N(\xi)f^{(n)})\partial^{\otimes n}\rangle+\langle\omega^{\otimes(n+1)},(\xi\odot f^{(n)})\partial^{\otimes(n+1)}\rangle.
\end{equation}
\end{lemma}

\begin{proof} Let $g^{(m)}\in\mathcal F^{(m)}(X)$, and to simplify the notation we assume that $g^{(m)}=\varphi^{\otimes m}$ with $\varphi\in\mathcal F(X)$. Then
\begin{align}
&\langle\omega,\xi\partial\rangle
\langle\omega^{\otimes n},f^{(n)}\partial^{\otimes n}\rangle\langle\omega^{\otimes m},g^{(m)}\rangle\notag\\
&\quad=\langle\omega,\xi\partial\rangle(m)_n\langle\omega^{\otimes n},f^{(n)}\varphi^{\otimes n}\rangle\langle\omega^{\otimes(m-n)},\varphi^{\otimes(m-n)}\rangle\notag\\
&\quad=(m)_n \langle\omega^{\otimes n},N(\xi)(f^{(n)}\varphi^{\otimes n})\rangle\langle\omega^{\otimes(m-n)},\varphi^{\otimes(m-n)}\rangle\notag\\
&\qquad+(m)_{n+1}\langle\omega^{\otimes n},f^{(n)}\varphi^{\otimes n}\rangle\langle\omega,\xi\varphi\rangle\langle\omega^{\otimes(m-n-1)},\varphi^{\otimes(m-n-1)}\rangle\notag\\
&\quad=\langle\omega^{\otimes n},(N(\xi)f^{(n)})\partial^{\otimes n}\rangle\langle\omega^{\otimes m},\varphi^{\otimes m}\rangle\notag\\
&\qquad+(m)_{n+1}\langle\omega^{\otimes(n+1)},(f^{(n)}\odot\xi)\varphi^{\otimes(n+1)}\rangle\langle\omega^{\otimes(m-n-1)},\varphi^{\otimes(m-n-1)}\rangle\notag\\
&\quad=\big[\langle\omega^{\otimes n},(N(\xi)f^{(n)})\partial^{\otimes n}\rangle+\langle\omega^{\otimes(n+1)},(\xi\odot f^{(n)})\partial^{\otimes(n+1)}\rangle\big]\langle\omega^{\otimes m},\varphi^{\otimes m}\rangle.\notag\qedhere
\end{align}
\end{proof}

We now prove \eqref{cdrts6ew6} by induction on $n$. For $n=1$, \eqref{cdrts6ew6}  is just trivial. Assume that \eqref{cdrts6ew6} holds for $n$. Noting that $(\langle\omega,\xi\partial\rangle)_{\xi\in\mathcal F(X)}$ is a family of commuting operators, we get by the induction assumption, \eqref{rses3waw2}, \eqref{crtw5w}, and Lemma~\ref{yd6effd6}, 
\begin{align}
&\langle\omega,\xi_1\partial\rangle\dotsm \langle\omega,\xi_{n+1}\partial\rangle=\sum_{k=1}^n\langle\omega,\xi_{n+1}\partial\rangle\langle\omega^{\otimes k},(\SO(n,k)(\xi_1\odot\dots\odot\xi_n))\partial^{\otimes k}\rangle\notag\\
&\quad=\langle\omega^{\otimes (n+1)},(\SO(n+1,n+1)(\xi_1\odot\dots\odot\xi_{n+1}))\partial^{\otimes {n+1}}\rangle\notag\\
&\qquad+\sum_{k=2}^n\big[\langle\omega^{\otimes k},(\xi_{n+1}\odot(\SO(n,k-1)(\xi_1\odot\dots\odot\xi_n)))\partial^{\otimes k}\rangle\notag\\
&\qquad+\langle\omega^{\otimes k},(N(\xi_{n+1})\SO(n,k)(\xi_1\odot\dots\odot\xi_n))\partial^{\otimes k}\rangle\big]+\langle\omega,\xi_{n+1}(\SO(n,1)(\xi_1\odot\dots\odot\xi_n))\rangle\notag\\
&\quad=\sum_{k=1}^{n+1}\langle\omega^{\otimes k},(\SO(n+1,k)(\xi_1\odot\dots\odot\xi_{n+1}))\partial^{\otimes k}\rangle.\notag\qedhere
\end{align}
\end{proof}

\begin{corollary}\label{vctrse45w}
For any $\xi,\varphi\in\mathcal F(X)$ and $n\in\mathbb N$, we have
\begin{align}
&\langle\omega,\xi\partial\rangle^n\frac1{1-\langle\omega,\varphi\rangle}=\sum_{k=1}^n\langle\omega^{\otimes k},(\SO(n,k)\xi^{\otimes n})\varphi^{\otimes k}\rangle\frac{k!}{(1-\langle\omega,\varphi\rangle)^{k+1}},\label{f6ew5w}\\
&\langle\omega,\xi\partial\rangle^n e^{\langle\omega,\varphi\rangle}=\sum_{k=1}^n\langle\omega^{\otimes k},(\SO(n,k)\xi^{\otimes n})\varphi^{\otimes k}\rangle e^{\langle\omega,\varphi\rangle}.\label{drtwe54w}
\end{align}
\end{corollary}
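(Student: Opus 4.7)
The plan is to specialize Theorem~\ref{dwq3q32q} to $\xi_1=\dots=\xi_n=\xi$, obtaining the operator identity
$$\langle\omega,\xi\partial\rangle^n=\sum_{k=1}^n\langle\omega^{\otimes k},(\SO(n,k)\xi^{\otimes n})\partial^{\otimes k}\rangle,$$
and then apply both sides to the target functions $e^{\langle\omega,\varphi\rangle}$ and $\frac1{1-\langle\omega,\varphi\rangle}$. Although Theorem~\ref{dwq3q32q} is phrased for operators on $\mathcal P(M(X))$, the definition \eqref{qdftyfe} of $\partial_x$ is just a directional (Gateaux) derivative and extends verbatim to either of these two target functions. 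If one prefers a formal justification, one can expand
$$e^{\langle\omega,\varphi\rangle}=\sum_{m=0}^\infty\frac1{m!}\langle\omega^{\otimes m},\varphi^{\otimes m}\rangle,\qquad \frac1{1-\langle\omega,\varphi\rangle}=\sum_{m=0}^\infty\langle\omega^{\otimes m},\varphi^{\otimes m}\rangle,$$
apply the operator identity term by term to each polynomial summand (which is legitimate by Theorem~\ref{dwq3q32q}), and resum; convergence is immediate in the first case and requires $|\langle\omega,\varphi\rangle|<1$ in the second.

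The next step is to compute the iterated derivatives. The elementary identity $\langle\omega+z\delta_x,\varphi\rangle=\langle\omega,\varphi\rangle+z\varphi(x)$ together with \eqref{qdftyfe} gives
$\partial_x e^{\langle\omega,\varphi\rangle}=\varphi(x)e^{\langle\omega,\varphi\rangle}$ and
$\partial_x\frac1{1-\langle\omega,\varphi\rangle}=\frac{\varphi(x)}{(1-\langle\omega,\varphi\rangle)^2}$.
A straightforward induction on $k$ then yields
$$\partial_{x_1}\dotsm\partial_{x_k}e^{\langle\omega,\varphi\rangle}=\varphi(x_1)\dotsm\varphi(x_k)\,e^{\langle\omega,\varphi\rangle},$$
$$\partial_{x_1}\dotsm\partial_{x_k}\frac1{1-\langle\omega,\varphi\rangle}=\frac{k!\,\varphi(x_1)\dotsm\varphi(x_k)}{(1-\langle\omega,\varphi\rangle)^{k+1}}.$$

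Substituting these formulas into the operator identity finishes the proof. In each integral
$\int_{X^k}\omega^{\otimes k}(dx_1\dotsm dx_k)(\SO(n,k)\xi^{\otimes n})(x_1,\dots,x_k)\,\partial_{x_1}\dotsm\partial_{x_k}(\,\cdot\,)$,
the scalar factor $e^{\langle\omega,\varphi\rangle}$ or $k!/(1-\langle\omega,\varphi\rangle)^{k+1}$ pulls out, while $\varphi(x_1)\dotsm\varphi(x_k)$ combines with $\SO(n,k)\xi^{\otimes n}$ (which is already symmetric) to form the pointwise product, yielding $\langle\omega^{\otimes k},(\SO(n,k)\xi^{\otimes n})\varphi^{\otimes k}\rangle$ in each summand. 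Summing over $k$ gives \eqref{drtwe54w} and \eqref{f6ew5w} respectively. The only real obstacle is the conceptual point of extending operators defined on $\mathcal P(M(X))$ to these transcendental functions; once that is conceded (in either of the two ways above), the calculation itself is a direct verification.
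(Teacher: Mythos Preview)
Your proof is correct and follows essentially the same route as the paper, which simply states that both formulas follow directly from Theorem~\ref{dwq3q32q} by differentiation of the target functions. You have merely made explicit the computation of the iterated derivatives $\partial_{x_1}\dotsm\partial_{x_k}$ and added a careful remark about extending the operator identity beyond $\mathcal P(M(X))$, a point the paper leaves implicit.
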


\begin{proof}
Both formulas \eqref{f6ew5w} and \eqref{drtwe54w} follow directly from Theorem \ref{dwq3q32q} by differentiation of the functions $\frac1{1-\langle\omega,\varphi\rangle}$ and $e^{\langle\omega,\varphi\rangle}$, respectively.
\end{proof}

\begin{remark}
Formula \eqref{f6ew5w} is an infinite dimensional extension of formula (9.47) in \cite{QG}.
\end{remark}

\subsection{Wick ordering and infinite dimensional Katriel's formula}

 Let $V$ be a vector space. For linear operators $A,B\in\mathcal L(V)$,   we denote $[A,B]:=AB-BA$, called the {\it commutator of $A$ and $B$}. 

Let us fix a reference measure $\sigma\in M(X)$.  Let us consider linear operators $a^+(\xi),a^-(\xi)\in\mathcal L(V)$ that linearly depend  on $\xi\in\mathcal F(X)$ and satisfy the {\it canonical commutation relations\footnote{We drop the standard assumption that the measure $\sigma$ appearing in the canonical commutation relations is positive. (Hence, we neither assume that $V$ is a Hilbert space, nor that  $a^-(\xi)$ is the adjoint of $a^+(\xi)$.)} (CCR)}:
\begin{gather}
[a^+(\varphi),a^+(\xi)]=[a^-(\varphi),a^-(\xi)]=0,\notag\\
[a^-(\varphi),a^+(\xi)]=\int_X\varphi(x) \xi(x)\sigma(dx),\quad \varphi,\xi\in\mathcal F(X).\label{xra453}
\end{gather}
The operators $a^+(\xi)$ and $a^-(\xi)$ are called {\it creation} and {\it annihilation operators}, respectively. 

 Our aim now is to introduce in $V$ the corresponding operators of particle density and Wick product of these operators. We will initially do this heuristically. 

We define in $V$ {\it creation operators $a^+(x)$} and {\it annihilation operators 
$a^-(x)$ at points $x\in X$} that satisfy
$$a^+(\xi)=\int_X \xi(x)a^+(x)\sigma(dx),\quad a^-(\xi)=\int_X\xi(x)a^-(x)\sigma(dx)\quad\text{for all }\xi\in \mathcal F(X).$$
In terms of these operators, the CCR \eqref{xra453} become
\begin{equation}\label{r4wqa4y}
[a^+(x),a^+(y)]=[a^-(x),a^-(y)]=0,\quad [a^-(x),a^+(y)]=\delta(x,y),
\end{equation}
where the distribution $\delta(x,y)$ satisfies 
$$\int_{X^2}\varphi(x)\xi(y) \delta(x,y)\sigma(dx)\sigma(dy)=\int_X\varphi(x)\xi(x)\sigma(dx),\quad\xi,\varphi\in\mathcal F(X).$$

We define the {\it particle density} $\rho(x):=a^+(x)a^-(x)$ for $x\in X$, and the corresponding {\it operators of particle density} $\rho(\xi):=\int_X\xi(x)\rho(x)\sigma(dx)$ for $\xi\in\mathcal F(X)$.
The CCR \eqref{r4wqa4y} then imply the commutation relations
\begin{equation}\label{esa4q4z}
[\rho(\varphi),\rho(\xi)]=0,\quad [\rho(\varphi),a^+(\xi)]=a^+(\varphi\xi),\quad [a^-(\xi),\rho(\varphi)]=a^-(\varphi\xi).\end{equation}
for all $\varphi,\xi\in\mathcal F(X)$.

Let $\sharp_1,\dots,\sharp_n\in\{+,-\}$. We define the {\it Wick product} ${:}\,a^{\sharp_1}(x_1)\dotsm a^{\sharp_n}(x_n){:}$ as the product of the operators $a^{\sharp_1}(x_1),\dots, a^{\sharp_n}(x_n)$ that is Wick ordered, i.e.,  all the creation operators are to the left of all the annihilation operators.  In particular, 
\begin{equation}\label{cd55uw}
{:}\,\rho(x_1)\dotsm \rho(x_n){:}=a^+(x_1)\dotsm a^+(x_n)a^-(x_n)\dotsm a^-(x_1).\end{equation}

\begin{lemma}\label{erws5w3}
 The CCR \eqref{r4wqa4y}  imply the following   recurrence formulas:
\begin{gather}
{:}\rho(x){:}=\rho(x),\notag\\
{:}\rho(x_1)\dotsm \rho(x_n){:}=\rho(x_1)\,{:}\rho(x_2)\dotsm \rho(x_n){:}
-\sum_{i=2}^n\delta(x_1,x_i)\,{:}\rho(x_2)\dotsm \rho(x_n){:}\, ,\quad n\ge2.\notag
\end{gather}
\end{lemma}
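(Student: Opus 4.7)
The plan is to start from the explicit definition \eqref{cd55uw} of the Wick product and compute $\rho(x_1)\,{:}\rho(x_2)\dotsm\rho(x_n){:}$ directly. Writing $\rho(x_1)=a^+(x_1)a^-(x_1)$, this product equals
\[
a^+(x_1)\,a^-(x_1)\,a^+(x_2)\dotsm a^+(x_n)\,a^-(x_n)\dotsm a^-(x_2),
\]
so everything reduces to moving the single annihilation operator $a^-(x_1)$ rightward past the block $a^+(x_2)\dotsm a^+(x_n)$ using the CCR \eqref{r4wqa4y}.

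Iterating the relation $a^-(x_1)a^+(x_i)=a^+(x_i)a^-(x_1)+\delta(x_1,x_i)$ for $i=2,\dots,n$ yields the identity
\[
a^-(x_1)\,a^+(x_2)\dotsm a^+(x_n)=a^+(x_2)\dotsm a^+(x_n)\,a^-(x_1)+\sum_{i=2}^n\delta(x_1,x_i)\,a^+(x_2)\dotsm\widehat{a^+(x_i)}\dotsm a^+(x_n),
\]
where the hat denotes omission. Substituting this back, and using that the annihilation operators commute among themselves (so $a^-(x_1)a^-(x_n)\dotsm a^-(x_2)=a^-(x_n)\dotsm a^-(x_1)$), the first term is exactly ${:}\rho(x_1)\dotsm\rho(x_n){:}$.

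For each contraction term, the presence of $\delta(x_1,x_i)$ allows us to identify $a^+(x_1)$ with $a^+(x_i)$; combined with the commutativity of the $a^+$'s (resp.\ the $a^-$'s) among themselves, the remaining factors reassemble to $a^+(x_2)\dotsm a^+(x_n)\,a^-(x_n)\dotsm a^-(x_2)={:}\rho(x_2)\dotsm\rho(x_n){:}$. Thus
\[
\rho(x_1)\,{:}\rho(x_2)\dotsm\rho(x_n){:}={:}\rho(x_1)\dotsm\rho(x_n){:}+\sum_{i=2}^n\delta(x_1,x_i)\,{:}\rho(x_2)\dotsm\rho(x_n){:},
\]
which rearranges to the claimed recurrence; the base case $n=1$ is just the definition.

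The only subtlety to address is that $a^\pm(x)$ and $\delta(x,y)$ are operator-valued distributions, so the manipulations above should really be read as identities tested against symmetric functions $\varphi_1(x_1)\dotsm\varphi_n(x_n)\in\mathcal F(X^n)$ and interpreted via the smeared CCR \eqref{xra453}. Once formulated in this smeared form, the commutation-through-and-contract procedure is a routine induction, and this is the main (rather mild) technical point of the argument.
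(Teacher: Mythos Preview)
Your proof is correct and follows essentially the same approach as the paper's: both compute $\rho(x_1)\,{:}\rho(x_2)\dotsm\rho(x_n){:}$ by writing out the creation/annihilation string and commuting $a^-(x_1)$ rightward past the block $a^+(x_2)\dotsm a^+(x_n)$ via the CCR, then use the $\delta(x_1,x_i)$ factors to identify $a^+(x_1)$ with $a^+(x_i)$ in each contraction term. The paper carries this out one step at a time rather than stating the general commutation identity you wrote, but the argument is the same.
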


\begin{proof} This formal result is known, see e.g.\ \cite[Section~2]{MeSh}. For the reader's convenience, we present the explicit calculations:
\begin{align*}
&\rho(x_1)\,{:}\rho(x_2)\dotsm \rho(x_n){:}=a^+(x_1)a^-(x_1)a^+(x_2)\dotsm a^+(x_n)a^-(x_n)\dotsm a^-(x_2)\\
&=\delta(x_1,x_2)a^+(x_1)a^+(x_3)\dotsm a^+(x_n)a^-(x_n)\dotsm a^-(x_2)\\
&\quad+a^+(x_1)a^+(x_2)a^-(x_1)a^+(x_3)\dotsm a^+(x_n)a^-(x_n)\dotsm a^-(x_2)\\
&=\delta(x_1,x_2){:}\rho(x_2)\dotsm \rho(x_n){:}+\delta(x_1,x_3)a^+(x_1)a^+(x_2)a^+(x_4)\dotsm a^+(x_n)a^-(x_n)\dotsm a^-(x_2)\\
&\quad+a^+(x_1)a^+(x_2)a^+(x_3)a^-(x_1)a^+(x_4)\dotsm a^+(x_n)a^-(x_n)\dotsm a^-(x_2)\\
&=\sum_{i=2}^3\delta(x_1,x_i){:}\rho(x_2)\dotsm \rho(x_n){:}\\
&\quad+a^+(x_1)a^+(x_2)a^+(x_3)a^-(x_1)a^+(x_4)\dotsm a^+(x_n)a^-(x_n)\dotsm a^-(x_2)\\
&=\sum_{i=2}^n\delta(x_1,x_i){:}\rho(x_2)\dotsm \rho(x_n){:}+a^+(x_1)\dotsm a^+(x_n)a^-(x_1)a^-(x_n)\dotsm a^-(x_2)\\
&=\sum_{i=2}^n\delta(x_1,x_i){:}\rho(x_2)\dotsm \rho(x_n){:}+
{:}\rho(x_1)\dotsm \rho(x_n){:}\,.\qedhere
\end{align*}
\end{proof}

  We are now in position to rigorously treat the  Wick product of operators of particle density. Assume that we are given a family of creation operators $a^+(\xi)$ and annihilation operators $a^-(\xi)$ acting in a vector space $V$ and satisfying the CCR \eqref{xra453}. Further  assume that the corresponding particle density $\rho(\xi)$ ($\xi\in\mathcal F(X)$) is well defined as a family of linear operators in $V$ that linearly depend on $\xi$. Hence, the commutation relations \eqref{esa4q4z} hold.

 Inspired by Lemma~\ref{erws5w3},  we  define, for $\xi\in\mathcal F(X)$, $\int_X \xi(x){:}\rho(x){:}\,\sigma(dx):=\rho(\xi)$ and for $\xi_1,\dots,\xi_n\in\mathcal F(X)$ ($n\ge2$),
\begin{align}
&\int_{X^n}\xi_1(x_1)\dotsm\xi_n(x_n)\,{:}\rho(x_1)\dotsm \rho(x_n){:}\,\sigma^{\otimes n}(dx_1\dotsm dx_n)\notag\\
&:=\rho(\xi_1)\int_{X^{n-1}}\xi_2(x_2)\dotsm\xi_n(x_n)\,{:}\rho(x_2)\dotsm \rho(x_n){:}\,\sigma^{\otimes (n-1)}(dx_2\dotsm dx_n)
\notag\\
&%\quad
-\sum_{i=2}^n\int_{X^n}\xi_2(x_2)\dotsm (\xi_1\xi_i)(x_i)\dotsm
\xi_n(x_n)\,{:}\rho(x_2)\dotsm \rho(x_n){:}\,\sigma^{\otimes (n-1)}(dx_2\dotsm dx_{n}).
\label{tesw5yw}
\end{align}
Extending this by linearity, we obtain a linear operator
$$\int_{X^n}f^{(n)}(x_1,\dots,x_n)\,{:}\rho(x_1)\dotsm \rho(x_n){:}\,\sigma^{\otimes n}(dx_1\dotsm dx_n)\in\mathcal L(V)$$
 for each $f^{(n)}:X^n\to\mathbb F$ that is a linear combination of functions $\xi_1\otimes\dots\otimes \xi_n$ with  $\xi_1,\dots,\xi_n\in\mathcal F(X)$.

\begin{theorem}[Infinite dimensional Katriel's formula] \label{taw4aq234q}
Let  $\rho(\xi)$ ($\xi\in\mathcal F(X)$) be the operators of particle density for a family of creation  operators $a^+(\xi)$ and annihilation operators $a^-(\xi)$ satisfying the CCR  \eqref{xra453}. Then, for each $n\in\mathbb N$ and $\xi_1,\dots,\xi_n\in\mathcal F(X)$,
\begin{align}
&\rho(\xi_1)\dotsm\rho(\xi_n)\notag\\
&=\sum_{k=1}^n\int_{X^k}(\SO(n,k)(\xi_1\odot\dotsm\odot\xi_n))(x_1,\dots,x_k)\,{:}\rho(x_1)\dotsm\rho(x_k){:}\,\sigma^{\otimes k}(dx_1\dotsm dx_k).\label{ds5w5u4} 
\end{align}
\end{theorem}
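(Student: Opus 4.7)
The plan is to follow the strategy of the proof of Theorem~\ref{dwq3q32q} (infinite dimensional Gr\"unert's formula), since the algebraic roles played by $\rho(\xi)$ acting on Wick-ordered integrals $\int f^{(n)}\,{:}\rho(x_1)\dotsm\rho(x_n){:}\,\sigma^{\otimes n}$ and by $\langle\omega,\xi\partial\rangle$ acting on monomials $\langle\omega^{\otimes n},f^{(n)}\partial^{\otimes n}\rangle$ are formally identical: both families commute (the former by the first identity in \eqref{esa4q4z}, the latter implicitly in the proof of Theorem~\ref{dwq3q32q}), and the relation that makes the induction work has exactly the same shape in both settings.

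First I would establish a commutation lemma parallel to Lemma~\ref{yd6effd6}: for every $\xi\in\mathcal F(X)$ and every $f^{(n)}$ in the linear span of tensor products of elements of $\mathcal F(X)$,
\begin{align*}
&\rho(\xi)\int_{X^n}f^{(n)}\,{:}\rho(x_1)\dotsm\rho(x_n){:}\,\sigma^{\otimes n}(dx_1\dotsm dx_n)\\
&\quad=\int_{X^n}(N(\xi)f^{(n)})\,{:}\rho(x_1)\dotsm\rho(x_n){:}\,\sigma^{\otimes n}(dx_1\dotsm dx_n)\\
&\qquad+\int_{X^{n+1}}(\xi\odot f^{(n)})\,{:}\rho(x_0)\rho(x_1)\dotsm\rho(x_n){:}\,\sigma^{\otimes(n+1)}(dx_0\dotsm dx_n).
\end{align*}
This is a direct rearrangement of the defining recurrence~\eqref{tesw5yw}: applying \eqref{tesw5yw} with leading function $\xi$ to $\xi\otimes f^{(n)}$ (with $f^{(n)}=\xi_1\otimes\dotsm\otimes\xi_n$) and solving for the term involving $\rho(\xi)$, one recognizes the correction sum as $\int(N(\xi)f^{(n)})\,{:}\rho\dotsm\rho{:}\sigma^{\otimes n}$ by the definition~\eqref{cresa5} of $N(\xi)$. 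Extending by linearity and symmetrizing the $n$-tensor slot on the right gives the claim.

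I would then induct on $n$. The base case $n=1$ is trivial since $\SO(1,1)=\mathbf 1^{(1)}$, so the right-hand side of \eqref{ds5w5u4} is $\int_X\xi_1(x){:}\rho(x){:}\sigma(dx)=\rho(\xi_1)$. For the inductive step, using commutativity of the $\rho(\xi_i)$ to multiply the inductive hypothesis by $\rho(\xi_{n+1})$ on the left, and then applying the commutation lemma to each term of the resulting sum, produces for each $k\in\{1,\dots,n\}$ two Wick integrals: one of order $k$ with kernel $N(\xi_{n+1})\,\SO(n,k)(\xi_1\odot\dotsm\odot\xi_n)$, and one of order $k+1$ with kernel $\xi_{n+1}\odot\SO(n,k)(\xi_1\odot\dotsm\odot\xi_n)$. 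Regrouping the result by the order of the Wick integral and invoking the Stirling recurrence \eqref{crtw5w} together with $\SO(n+1,n+1)=\mathbf 1^{(n+1)}$ from \eqref{rses3waw2} collapses the double sum to $\sum_{k=1}^{n+1}\int(\SO(n+1,k)(\xi_1\odot\dotsm\odot\xi_{n+1}))\,{:}\rho(x_1)\dotsm\rho(x_k){:}\,\sigma^{\otimes k}$, matching the target. This is a verbatim transcription of the final display in the proof of Theorem~\ref{dwq3q32q}.

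The main obstacle is not the induction itself but the well-definedness and symmetry of the map $f^{(n)}\mapsto \int f^{(n)}\,{:}\rho(x_1)\dotsm\rho(x_n){:}\,\sigma^{\otimes n}$ obtained by linear extension of \eqref{tesw5yw}. The defining recurrence singles out the first slot, so one must verify that it produces a well-defined, symmetric operator before both the commutation lemma and the inductive step can be applied to the symmetric kernels $\SO(n,k)(\xi_1\odot\dotsm\odot\xi_n)$. This compatibility is not formal: it requires an auxiliary induction on $n$ that uses Lemma~\ref{erws5w3} (guaranteeing symmetry of the heuristic symbol ${:}\rho(x_1)\dotsm\rho(x_n){:}$ in its arguments) together with $[\rho(\varphi),\rho(\xi)]=0$ to show that permuting tensor factors of the input function leaves the integral unchanged. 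Once this consistency is established, the remainder of the proof is a routine adaptation of the argument for Theorem~\ref{dwq3q32q}.
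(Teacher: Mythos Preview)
Your proposal is correct and follows essentially the same route as the paper: first establish symmetry of the Wick integral in its tensor slots, then derive the commutation lemma (your displayed identity is exactly what the paper obtains from \eqref{cresa5}, \eqref{tesw5yw}, \eqref{cdtrs5w}), and finally run the same induction as in Theorem~\ref{dwq3q32q} using the recurrence~\eqref{crtw5w}. The only difference is emphasis: the paper dispatches the symmetry/well-definedness issue in one line (``as easily follows from~\eqref{tesw5yw}'') whereas you rightly flag it as the step requiring the most care.
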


\begin{remark}
In the case where $X=\mathbb R^d$ and $\sigma$ is the Lebesgue measure, a formula equivalent to \eqref{ds5w5u4} was discussed, at a formal level,  by 
Menikoff and Sharp in \cite[Section~2]{MeSh}.
\end{remark}

\begin{proof}[Proof of Theorem \ref{taw4aq234q}] As easily follows from \eqref{tesw5yw}, for any $\xi_1,\dots,\xi_n\in\mathcal F(X)$ and a permutation $\pi\in \mathfrak S_n$,
\begin{align*}
&\int_{X^n}\xi_1(x_1)\dotsm\xi_n(x_n)\,{:}\rho(x_1)\dotsm \rho(x_n){:}\,\sigma^{\otimes n}(dx_1\dotsm dx_n)\\
&\quad=\int_{X^n}\xi_{\pi(1)}(x_1)\dotsm\xi_{\pi(n)}(x_n)\,{:}\rho(x_1)\dotsm \rho(x_n){:}\,\sigma^{\otimes n}(dx_1\dotsm dx_n),
\end{align*}
which implies
\begin{align}
&\int_{X^n}\xi_1(x_1)\dotsm\xi_n(x_n)\,{:}\rho(x_1)\dotsm \rho(x_n){:}\,\sigma^{\otimes n}(dx_1\dotsm dx_n)\notag\\
&\quad=\int_{X^n}(\xi_{1}\odot\dotsm\odot\xi_n)(x_1,\dots,x_n)\,{:}\rho(x_1)\dotsm \rho(x_n){:}\,\sigma^{\otimes n}(dx_1\dotsm dx_n).\label{cdtrs5w}
\end{align}
By \eqref{cresa5}, \eqref{tesw5yw}, and \eqref{cdtrs5w}, for any $\varphi,\xi_1,\dots,\xi_n\in\mathcal F(X)$,
\begin{align}
&\rho(\varphi)\int_{X^n}(\xi_{1}\odot\dotsm\odot\xi_n)(x_1,\dots,x_n)\,{:}\rho(x_1)\dotsm \rho(x_n){:}\,\sigma^{\otimes n}(dx_1\dotsm dx_n)\notag\\
&\quad=\int_{X^{n+1}}(\varphi\odot\xi_{1}\odot\dotsm\odot\xi_n)(x_1,\dots,x_{n+1})\,{:}\rho(x_1)\dotsm \rho(x_{n+1}){:}\,\sigma^{\otimes (n+1)}(dx_1\dotsm dx_{n+1})\notag\\
&\qquad+\int_{X^n}(N(\varphi)(\xi_{1}\odot\dotsm\odot\xi_n))(x_1,\dots,x_n)\,{:}\rho(x_1)\dotsm \rho(x_n){:}\,\sigma^{\otimes n}(dx_1\dotsm dx_n).\notag%\label{rd6e64ewz}
\end{align}
Now to prove formula \eqref{ds5w5u4} we employ the same arguments as those used to derive formula \eqref{cdrts6ew6} from Lemma \ref{yd6effd6}.
\end{proof}

\begin{remark}\label{tesw5}
We note that Theorem~\ref{dwq3q32q} is actually a special case of Theorem~\ref{taw4aq234q}. Indeed, let $V=\mathcal P(M(X))$, and for each $\xi\in\mathcal F(X)$, we define $a^+(\xi)=\langle\omega,\xi\rangle$, the operator of multiplication by $\langle\omega,\xi\rangle$, and $a^-(\xi)=\langle\sigma,\xi\partial\rangle$. These operators satisfy the CCR \eqref{xra453}. In this case, $a^-(x)=\partial_x$ is a well-defined operator on $V$, while $a^+(x)$ is an operator-valued distribution---the operator of multiplication by the (generally speaking distribution) $\frac{d\omega}{d\sigma}(x)$. The corresponding particle density is
$\rho(x)=\frac{d\omega}{d\sigma}(x)\partial_x$, which yields, for each $\xi\in\mathcal F(X)$,
$$\rho(\xi)=\int_X\xi(x)\frac{d\omega}{d\sigma}(x)\partial_x\,\sigma(dx)=\int_X\omega(dx)\xi(x)\partial_x=\langle\omega,\xi\partial\rangle.$$
 As easily seen, the operators $a^+(\xi)$, $a^-(\xi)$, $\rho(\xi)$ indeed satisfy \eqref{esa4q4z}. Furthermore, using \eqref{tesw5yw}, we conclude that,  for any $\xi_1,\dots,\xi_n\in\mathcal F(X)$,
\begin{align}
&\int_{X^n}\xi_1(x_1)\dotsm\xi_n(x_n)\,{:}\rho(x_1)\dotsm \rho(x_n){:}\,\sigma^{\otimes n}(dx_1\dotsm dx_n)\notag\\
&\quad=\langle\omega^{\otimes n},(\xi_1\otimes\dots\otimes \xi_n)\partial^{\otimes n}\rangle= \langle\omega^{\otimes n},(\xi_1\odot\dots\odot \xi_n)\partial^{\otimes n}\rangle.\notag
\end{align}
Hence, formula \eqref{cdrts6ew6} is  a consequence of the commutation relations between the operators of multiplication by $\langle\omega,\xi\rangle$ ($\xi\in\mathcal F(X)$) and the differentiation operators $\partial_x$ ($x\in X$). Note also that, in this case, the choice of the reference measure $\sigma\in M(X)$ was irrelevant. 
\end{remark}

\begin{remark} Formula \eqref{ds5w5u4} can be inverted:
\begin{align*}
&\int_{X^n}\xi_1(x_1)\dotsm\xi_n(x_n)\,{:}\rho(x_1)\dotsm\rho(x_n){:}\,\sigma^{\otimes n}(dx_1\dots,dx_n)\\
&\quad=\sum_{k=1}^n\int_{X^k}(\so(n,k)(\xi_1\odot\dotsm\odot\xi_n))(x_1,\dots,x_k)\,\rho(x_1)\dotsm\rho(x_k)\,\sigma^{\otimes k}(dx_1\dotsm dx_k).
\end{align*}
This follows immediately from  \eqref{ds5w5u4} and Proposition \ref{fye6e}.
\end{remark}

%%%%%%%%%%%%%%%%%%%%%%%%%%%%%%%
\subsection{Quantum Poisson process}

Let the conditions of Theorem \ref{taw4aq234q} be satisfied. 
 Let $\mathbf A$ denote the unital algebra generated by the operators $a^+(\xi)$, $a^-(\xi)$,  $\rho(\xi)$ ($\xi\in\mathcal F(X)$). Due to the commutation relations \eqref{xra453}, \eqref{esa4q4z} and the polarization identity, each element of the algebra $\mathbf A$ is a linear combination of the identity operator $\mathbf 1$ and operators of the form $a^+(\varphi)^i\rho(\psi)^ja^-(\xi)^k$ with $\varphi,\psi,\xi\in\mathcal F(X)$ and $i,j,k\in\mathbb N_0$, $i+j+k\ge1$.

  We define the {\em vacuum functional} $\tau_\sigma$ on $\mathbf A$ by setting 
\begin{gather}
\tau_\sigma(\mathbf 1)=1,\notag\\
\tau_\sigma\left(a^+(\varphi)^i\rho(\psi)^ja^-(\xi)^k\right)=0,\quad \varphi,\psi,\xi\in\mathcal F(X),\ i,j,k\in\mathbb N_0,\ i+j+k\ge1,\label{vcyrs5a}
\end{gather}
 and extending it by linearity to the whole $\mathbf A$. 
 
  For each $\xi\in\mathcal F(X)$, we define $R(\xi)\in\mathbf A$ by 
\begin{equation}\label{vcttfts5yw3}
R(\xi):=a^+(\xi)+a^-(\xi)+\rho(\xi)+\langle\sigma,\xi\rangle.\end{equation} 
By the commutation relations \eqref{xra453}, \eqref{esa4q4z}, for any $\xi_1,\xi_2\in\mathcal F(X)$, we have $[R(\xi_1),R(\xi_2)]=0$.

\begin{remark} Consider the representation of the CCR discussed in Remark~\ref{tesw5}. Then each $A\in\mathbf A$ is a linear operator on $\mathcal P(M(X))$. Recall also that $a^+(\xi)$ is the multiplication by $\langle\omega,\xi\rangle$, $a^-(\xi)=\langle\sigma,\xi\partial\rangle$, and $\rho(\xi)=\langle\omega,\xi\partial\rangle$. If $1$ denotes the  monomial on $M(X)$ that is identically equal to 1, then 
$$\big(a^+(\varphi)^i\rho(\psi)^ja^-(\xi)^k 1\big)(\omega)=\begin{cases}
0,&\text{if }\max\{j,k\}\ge1,\\
\langle\omega^{\otimes i},\varphi^{\otimes i}\rangle,&\text{if $i\ge1$ and $j=k=0$}.
\end{cases}
$$
Therefore, by \eqref{vcyrs5a}, the vacuum functional on $\mathbf A$ is given by $\tau_\sigma(A)=(A1)(0)$, i.e., one has to apply the operator $A$ to 1 and then evaluate the obtained polynomial at zero.  Furthermore, it follows from \eqref{vcttfts5yw3} that, in this case, 
$$R(\xi)=\langle\omega+\sigma,\xi(\partial+1)\rangle,\quad\xi\in\mathcal F(X).$$
\end{remark}

\begin{theorem}\label{y6weu43} Let  $\xi_1,\dots,\xi_n\in\mathcal F(X)$, $n\in\mathbb N$ . Under the above assumptions, we then have
$$\tau_\sigma\big(R(\xi_1)\dotsm R(\xi_n)\big)=\mathbb E_\sigma(p). $$
 Here $\mathbb E_\sigma$ is the Poisson functional with intensity measure $\sigma$ and $p(\omega)=\langle\omega,\xi_1\rangle\dotsm \langle\omega,\xi_n\rangle$.

\end{theorem}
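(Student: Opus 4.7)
The plan is to verify the identity inside the concrete realisation of the CCR algebra $\mathbf{A}$ described in Remark~\ref{tesw5}, in which $V = \mathcal{P}(M(X))$. Since $\tau_\sigma$ is determined on $\mathbf{A}$ by its defining values~\eqref{vcyrs5a}, it is enough to exhibit one representation in which a linear functional satisfying those values coincides with $\tau_\sigma$, and to verify the claim there. In the representation of Remark~\ref{tesw5}, the map $A\mapsto (A\cdot 1)(0)$ manifestly satisfies~\eqref{vcyrs5a}: indeed $a^-(\xi)\cdot 1 = \langle\sigma,\xi\partial\rangle\cdot 1 = 0$ and $\rho(\psi)\cdot 1 = \langle\omega,\psi\partial\rangle\cdot 1 = 0$ because $\partial_x$ annihilates constants, whereas $a^+(\varphi)^i\cdot 1 = \langle\omega,\varphi\rangle^i$ vanishes at $\omega = 0$ for $i\geq 1$.

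In that representation, $R(\xi) = \langle\omega,\xi\rangle + \langle\sigma,\xi\partial\rangle + \langle\omega,\xi\partial\rangle + \langle\sigma,\xi\rangle = \langle\omega+\sigma,\xi\rangle + \langle\omega+\sigma,\xi\partial\rangle$. The key observation is a change of variables: writing $\omega' := \omega+\sigma$ and using the translation invariance $\partial_x F(\omega+\sigma) = (\partial'_x F)(\omega+\sigma)$, one finds that for any polynomial $F$ in $\omega'$,
\[R(\xi) F(\omega+\sigma) = (A(\xi)F)(\omega+\sigma), \qquad A(\xi) := \langle\omega',\xi\rangle + \langle\omega',\xi\partial'\rangle.\]
Iterating and then setting $\omega=0$ yields
\[\tau_\sigma\bigl(R(\xi_1)\cdots R(\xi_n)\bigr) = \bigl(A(\xi_1)\cdots A(\xi_n)\cdot 1\bigr)\big|_{\omega'=\sigma}.\]

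The heart of the argument is to establish, by induction on $n$, the combinatorial identity
\[A(\xi_1)\cdots A(\xi_n)\cdot 1 = \sum_{\pi\in\operatorname{UP}(\{1,\dots,n\})}\prod_{B\in\pi}\Bigl\langle\omega',\prod_{i\in B}\xi_i\Bigr\rangle,\]
where the sum runs over all set partitions of $\{1,\dots,n\}$. In the inductive step, apply $A(\xi_1) = \langle\omega',\xi_1\rangle + \langle\omega',\xi_1\partial'\rangle$ to the expression for $A(\xi_2)\cdots A(\xi_n)\cdot 1$: multiplication by $\langle\omega',\xi_1\rangle$ adjoins $\{1\}$ as a new singleton block to each partition of $\{2,\dots,n\}$, while Leibniz's rule inside $\langle\omega',\xi_1\partial'\rangle$ replaces exactly one factor $\langle\omega',\prod_{i\in B}\xi_i\rangle$ by $\langle\omega',\xi_1\prod_{i\in B}\xi_i\rangle$, which amounts to inserting $1$ into the existing block $B$. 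Summed, these two mechanisms enumerate precisely all set partitions of $\{1,\dots,n\}$.

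Specialising to $\omega'=\sigma$, one obtains $\tau_\sigma(R(\xi_1)\cdots R(\xi_n)) = \sum_\pi\prod_{B\in\pi}\langle\sigma,\prod_{i\in B}\xi_i\rangle$. To match this with $\mathbb E_\sigma(p)$, I would invoke Theorem~\ref{e6ew63} to write $\mathbb E_\sigma(p) = \sum_{k=1}^n\langle\sigma^{\otimes k},\SO(n,k)(\xi_1\odot\cdots\odot\xi_n)\rangle$, and then extract the multilinear coefficient of $t_1\cdots t_n$ in the generating-function identity~\eqref{xtsaer5aw5} of Proposition~\ref{cesseas} applied with $\xi = t_1\xi_1+\cdots+t_n\xi_n$: the right-hand side $\tfrac{1}{k!}\langle\sigma,e^{t\cdot\xi}-1\rangle^k$ expands to $\sum_{\lambda\in\operatorname{UP}(n,k)}\prod_{B\in\lambda}\langle\sigma,\prod_{i\in B}\xi_i\rangle$, and summing over $k$ recovers the sum over all set partitions. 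The main obstacle is the combinatorial induction; it becomes transparent once one notices that $A(\xi)$ implements exactly the two ways---\emph{new singleton} or \emph{join an existing block}---in which a fresh element may enter a set partition.
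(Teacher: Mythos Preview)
Your proof is correct and follows a genuinely different route from the paper's. The paper proceeds abstractly via Lemma~\ref{rqdwes6eqw}, which is simply the infinite-dimensional Katriel formula (Theorem~\ref{taw4aq234q}) applied to the shifted operators $A^\pm(x)=a^\pm(x)+1$: these still satisfy the CCR, and their particle density is exactly $R(\xi)$. Applying $\tau_\sigma$ to that Wick-ordered expansion kills every term containing a genuine $a^+$ or $a^-$, leaving $\sum_{k=1}^n\langle\sigma^{\otimes k},\SO(n,k)(\xi_1\odot\cdots\odot\xi_n)\rangle$, which Theorem~\ref{e6ew63} identifies with $\mathbb E_\sigma(p)$. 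You instead descend to the concrete realisation on $\mathcal P(M(X))$, translate by $\sigma$, and run a direct set-partition induction for $A(\xi_1)\cdots A(\xi_n)\cdot 1$; the matching at the end via Theorem~\ref{e6ew63} and polarisation of~\eqref{xtsaer5aw5} is correct, though citing~\eqref{xsa43wqa4q} directly would be quicker. The paper's route is representation-free and recycles the Katriel machinery already built; yours is more elementary, avoids Lemma~\ref{rqdwes6eqw} entirely, and makes the Poisson moment formula $\mathbb E_\sigma(p)=\sum_{\pi}\prod_{B\in\pi}\langle\sigma,\prod_{i\in B}\xi_i\rangle$ explicit, at the cost of the (harmless, since normal ordering via the commutation relations is universal) reduction to one particular representation.
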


Let $\mathbf A'$ denote the commutative unital subalgebra of $\mathbf A$ that is generated by the operators $R(\xi)$ ($\xi\in\mathcal F(X)$). Let $\mathcal P_{\mathrm{alg}}(M(X))$ denote the subset of $\mathcal P(M(X))$  consisting of all polynomials $p$ of the form \eqref{tew53w5}, where each $f^{(k)}$ is a finite sum of functions of the form $\xi_1\odot\dots\odot\xi_k$ with $\xi_1,\dots,\xi_k\in\mathcal F(X)$. Theorem~\ref{y6weu43} states an isomorphism between $(\mathbf A',\tau_\sigma)$ and $(\mathcal P_{\mathrm{alg}}(M(X)),\mathbb E_\sigma)$. Hence, $(\mathbf A',\tau_\sigma)$ can be thought of as the {\it quantum Poisson process with intensity measure $\sigma$}. This is a well-known result in the case where $\sigma$ is a positive non-atomic measure, see e.g.\ \cite{GGPS,HP,Surgailis}.

We first prove the following

\begin{lemma}\label{rqdwes6eqw}
For any $\xi_1,\dots,\xi_n\in\mathcal F(X)$, we have
\begin{align}
&R(\xi_1)\dotsm R(\xi_n)=\sum_{k=1}^n\int_{X^k}(\SO(n,k)(\xi_1\odot\dotsm\odot\xi_n))(x_1,\dots,x_k)\notag\\
&\times
(a^+(x_1)+1)\dotsm(a^+(x_k)+1)(a^-(x_k)+1)\dotsm(a^-(x_1)+1)\,
\sigma^{\otimes k}(dx_1\dotsm dx_k).\label{eraq42q}
\end{align}
\end{lemma}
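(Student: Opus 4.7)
The key observation is that, by expanding the product and comparing with \eqref{vcttfts5yw3},
$$R(\xi) = \int_X \xi(x)\bigl(a^+(x)+1\bigr)\bigl(a^-(x)+1\bigr)\,\sigma(dx).$$
This suggests introducing the shifted operators $\tilde a^\pm(\xi) := a^\pm(\xi) + \langle\sigma,\xi\rangle$, corresponding formally to $\tilde a^\pm(x) = a^\pm(x)+1$. My plan is to verify that these shifted operators fit the hypotheses of Theorem~\ref{taw4aq234q} with $R(\xi)$ playing the role of the particle density, and then to read off the conclusion using the explicit Wick-ordered form given by Lemma~\ref{erws5w3}.

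Because the shifts are central, all commutators are unchanged: $[\tilde a^-(\varphi),\tilde a^+(\xi)] = [a^-(\varphi),a^+(\xi)] = \int_X \varphi\xi\,d\sigma$, and the like-signed commutators vanish, so the CCR~\eqref{xra453} hold for the tilde operators. The associated smeared particle density
$$\tilde\rho(\xi) := \int_X \xi(x)\tilde a^+(x)\tilde a^-(x)\,\sigma(dx) = \rho(\xi) + a^+(\xi) + a^-(\xi) + \langle\sigma,\xi\rangle$$
is a well-defined element of $\mathcal L(V)$ and equals $R(\xi)$. A short computation based on \eqref{xra453} and \eqref{esa4q4z} yields $[\tilde\rho(\varphi),\tilde a^+(\xi)] = \tilde a^+(\varphi\xi)$ and $[\tilde a^-(\xi),\tilde\rho(\varphi)] = \tilde a^-(\varphi\xi)$, so the abstract framework required by Theorem~\ref{taw4aq234q} is in place for the tilde system.

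Applying Theorem~\ref{taw4aq234q} to the shifted operators gives
$$R(\xi_1)\dotsm R(\xi_n) = \sum_{k=1}^n \int_{X^k}\bigl(\SO(n,k)(\xi_1\odot\dots\odot\xi_n)\bigr)(x_1,\dots,x_k)\,{:}\tilde\rho(x_1)\dotsm\tilde\rho(x_k){:}\,\sigma^{\otimes k}(dx_1\dotsm dx_k).$$
Since the derivation of Lemma~\ref{erws5w3} used only the CCR \eqref{r4wqa4y}, it applies verbatim to the tilde operators and identifies the Wick-ordered kernel as $\tilde a^+(x_1)\dotsm\tilde a^+(x_k)\tilde a^-(x_k)\dotsm\tilde a^-(x_1)$, which is precisely $(a^+(x_1)+1)\dotsm(a^+(x_k)+1)(a^-(x_k)+1)\dotsm(a^-(x_1)+1)$, matching the kernel in~\eqref{eraq42q}. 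The only genuinely substantive step is the initial factorisation of $R(\xi)$ as a shifted particle density; everything else reduces to the already-established Theorem~\ref{taw4aq234q} and Lemma~\ref{erws5w3}, so no new combinatorial work is needed.
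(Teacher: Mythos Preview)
Your proposal is correct and follows essentially the same route as the paper: introduce the shifted operators $A^\pm(\xi)=a^\pm(\xi)+\langle\sigma,\xi\rangle$, observe that they satisfy the CCR with associated particle density $R(\xi)$, and then invoke Theorem~\ref{taw4aq234q}. The paper adds one layer of care you omit: since Lemma~\ref{erws5w3} is explicitly labelled a formal computation and the Wick products in Theorem~\ref{taw4aq234q} are \emph{defined} via the recursion~\eqref{tesw5yw}, the paper first verifies that the explicit integrals on the right-hand side of~\eqref{eraq42q} are well-defined elements of $\mathcal L(V)$ (by expanding the product into terms of the form $a^+(\varphi_1)\dotsm a^+(\varphi_i)\,{:}\rho(\varphi_{i+1})\dotsm\rho(\varphi_{i+j}){:}\,a^-(\varphi_{i+j+1})\dotsm a^-(\varphi_k)$), and then checks directly from~\eqref{xra453} and~\eqref{esa4q4z} that these explicit integrals satisfy the same recursion, thereby identifying them with the abstractly defined Wick products for the shifted system.
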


\begin{proof}
First, we note that the integrals on the right-hand side of formula \eqref{eraq42q} well define  linear operators on the vector space $V$. Indeed, since all creation operators (respectively all annihilation operators) commute, each integral is a finite sum of terms of the form
\begin{align*}
&\int_{X^k}\varphi_1(x_1)\dotsm\varphi_k(x_k)a^+(x_1)\dotsm a^+(x_{i+j})a^-(x_{i+j})\dotsm a^-(x_{i+1})\\
&\qquad\times a^-(x_{i+j+1})\dotsm a^-(x_k)\sigma^{\otimes k}(dx_1\dotsm dx_k)\\
&\quad=a^+(\varphi_1)\dotsm a^+(\varphi_i)\,{:}\rho(\varphi_{i+1})\dotsm\rho(\varphi_{i+j}){:}\,a^-(\varphi_{i+j+1})\dotsm a^-(\varphi_k),
\end{align*}
where $\varphi_1,\dots\varphi_k\in\mathcal F(X)$, $i,j\in\mathbb N_0$, $0\le i+j\le k$.

The formal proof of formula \eqref{eraq42q} is straightforward. Indeed, for $\xi\in\mathcal F(X)$, denote $A^+(\xi):=a^+(\xi)+\langle\sigma,\xi\rangle$, $A^-(\xi):=a^-(\xi)+\langle\sigma,\xi\rangle$. These operators obviously satisfy the CCR. Writing $A^+(\xi)=\int_X \xi(x)A^+(x)\sigma(dx)$ and $A^-(\xi)=\int_X\xi(x)A^-(x)\sigma(dx)$, we get $A^+(x)=a^+(x)+1$, $A^-(x)=a^-(x)+1$. Hence, the corresponding particle density is
$$\int_X\xi(x)A^+(x)A^-(x)\sigma(dx)=\int_X\xi(x)(a^+(x)+1)(a^-(x)+1)\sigma(dx)=R(\xi).$$
Therefore, formula \eqref{eraq42q} follows from Theorem \ref{taw4aq234q}.

To make these arguments rigorous, we proceed as follows. For any $\xi_1,\dots,\xi_n\in\mathcal F(X)$, denote
\begin{align}
{:}R(\xi_1)\dotsm R(\xi_n){:}=&\int_{X^n}\xi_1(x_1)\dotsm \xi_n(x_n) A^+(x_1)
\dotsm A^+(x_n)\notag\\
&\times A^-(x_n)\dotsm A^-(x_1)\sigma^{\otimes n}(dx_1\dotsm dx_n).\label{y6se6qesq}\end{align}
(Recall that the integral on the right-hand side of formula \eqref{y6se6qesq} indeed determines a linear operator on $V$.) Using the commutation relations \eqref{xra453}, \eqref{esa4q4z}, one easily checks that
$${:}R(\xi_1)\dotsm R(\xi_n){:}=R(\xi_1){:}R(\xi_2)\dotsm R(\xi_n){:}-\sum_{i=2}^n {:}R(\xi_2)\dotsm R(\xi_1\xi_i)\dotsm R(\xi_n){:}\,. \qedhere$$
\end{proof}

\begin{proof}[Proof of Theorem~\ref{y6weu43}] Applying the vacuum functional $\tau_\sigma$ to equality \eqref{eraq42q}, we easily obtain
$$\tau_\sigma\big(R(\xi_1)\dotsm R(\xi_n)\big)= \sum_{k=1}^n\langle\sigma^{\otimes k},\SO(n,k)(\xi_1\odot\dots\odot\xi_n)\rangle.$$
Now Theorem \ref{e6ew63} yields the statement. 
\end{proof}

\section{Touchard polynomials}\label{ydrsdese3aqw}

According to Remark \ref{ts5as2}, for each $n,k\in\mathbb N$, $k\le n$, we  define the linear operator $\SO(n,k)^*\in\mathcal L(M^{(k)}(X),M^{(n)}(X))$. For each $n\in\mathbb N$, we define the {\it Touchard (or exponential) polynomial of degree $n$} to be the mapping 
\begin{equation}\label{d5w5w4}
T_n: M(X)\to M^{(n)}(X),\quad T_n(\omega)=\sum_{k=1}^n\SO(n,k)^*\omega^{\otimes k}.\end{equation}
We also set $T_0(\omega):=1$ for all $\omega\in M(X)$.

\begin{remark}
In the case of a single-point space $X$, the polynomials \eqref{d5w5w4} become the classical Touchard polynomials.
\end{remark}

Let $\omega\in M(X)$ and $i\in\mathbb N$. Using a notation from \cite[Section~5]{RW}, we define a {\it diagonal measure} $\omega^{[i]}_{\hat 1}\in M^{(i)}(X)$ by
$$\omega^{[i]}_{\hat 1}(dx_1\dotsm dx_i):=\omega(dx_1)\delta_{x_1}(dx_2)\dotsm\delta_{x_1}(dx_i).$$
By \eqref{cq5y42w}, we have, for each $f^{(i)}\in\mathcal F^{(i)}(X)$,
$$\langle \omega^{[i]}_{\hat 1},f^{(i)}\rangle=\langle\omega,\mathbb D^{(i)}f^{(i)}\rangle.$$
Hence, in view of formulas \eqref{cs5w5a} and \eqref{xsa43wqa4q}, we have, for each $n\in\mathbb N$,
\begin{equation}\label{dtw5ua}
T_n(\omega)=\sum_{k=1}^n\sum_{\lambda=\{\lambda_1,\dots,\lambda_k\}\in \operatorname{UP}(n,k)}\omega^{[\,|\lambda_1|\,]}_{\hat 1}\odot\dots\odot \omega^{[\,|\lambda_k|\,]}_{\hat 1}.\end{equation}

For each $f^{(n)}\in\mathcal F^{(n)}(X)$, 
\begin{equation}\label{ctresw53wq}\langle T_n(\omega),f^{(n)}\rangle=\sum_{k=1}^n\langle\omega^{\otimes k},\SO(n,k)f^{(n)}\rangle\end{equation}
is a polynomial from $\mathcal P(M(X))$, and if $f^{(n)}\in\mathcal {CF}^{(n)}(X)$, then this polynomial belongs to $\mathcal {CP}(M(X))$. 

The following statement is an immediate consequence of  Theorem \ref{e6ew63}.

\begin{corollary}\label{sesawawqa2q}
For each $\omega\in M(X)$,  and $f^{(n)}\in\mathcal F^{(n)}(X)$, $n\in\mathbb N$, we have
\begin{equation*}%\label{s5wq3}
\langle T_n(\omega),f^{(n)}\rangle=\mathbb E_\omega(p),
 \end{equation*}
where $p(\omega)=\langle\omega^{\otimes n},f^{(n)}\rangle$. In particular, for each $\xi\in\mathcal F(X)$,
\begin{equation}\label{cydye6edd}
\langle T_n(\omega),\xi^{\otimes n}\rangle=\mathbb E_\omega(m^{(n)}_{\xi}),\end{equation}
where
\begin{equation}\label{vydye}
m_\xi^{(n)}(\omega)=\langle\omega,\xi\rangle^n.
\end{equation}
\end{corollary}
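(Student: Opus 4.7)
The plan is extremely short: the corollary is essentially a direct rewriting of Theorem~\ref{e6ew63} in the notation of the Touchard polynomials $T_n(\omega)$, so no new combinatorial or analytic work is needed.

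First, I would start from the defining formula \eqref{d5w5w4}, namely $T_n(\omega)=\sum_{k=1}^n\SO(n,k)^*\omega^{\otimes k}$, and take the duality pairing of both sides with an arbitrary $f^{(n)}\in\mathcal F^{(n)}(X)$. Using the defining relation $\langle A^*\mu^{(k)},f^{(n)}\rangle=\langle\mu^{(k)},Af^{(n)}\rangle$ for the adjoint, this gives
$$\langle T_n(\omega),f^{(n)}\rangle=\sum_{k=1}^n\langle\omega^{\otimes k},\SO(n,k)f^{(n)}\rangle,$$
which is precisely \eqref{ctresw53wq}. (Strictly speaking, $\SO(n,k)^*$ was initially defined on $M^{(k)}(X)$ as the dual of the continuous operator $\SO(n,k)\in\mathcal L(\mathcal{CF}^{(n)}(X),\mathcal{CF}^{(k)}(X))$; the extension of this duality to arbitrary $f^{(n)}\in\mathcal F^{(n)}(X)$ is already recorded in Remark~\ref{ts5as2}, so the identity above is valid for all $f^{(n)}\in\mathcal F^{(n)}(X)$.)

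Next, I would invoke Theorem~\ref{e6ew63}: for the monomial $p(\omega)=\langle\omega^{\otimes n},f^{(n)}\rangle$ one has
$$\mathbb E_\omega(p)=\sum_{k=1}^n\langle\omega^{\otimes k},\SO(n,k)f^{(n)}\rangle.$$
Comparing with the displayed identity above immediately yields the first claim, $\langle T_n(\omega),f^{(n)}\rangle=\mathbb E_\omega(p)$.

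Finally, the second assertion is the specialization $f^{(n)}=\xi^{\otimes n}$: since $\langle\omega^{\otimes n},\xi^{\otimes n}\rangle=\langle\omega,\xi\rangle^n=m_\xi^{(n)}(\omega)$ by \eqref{vydye}, the previous formula becomes $\langle T_n(\omega),\xi^{\otimes n}\rangle=\mathbb E_\omega(m_\xi^{(n)})$, which is \eqref{cydye6edd}. There is no substantive obstacle here; the only point worth checking carefully is that the pairing of $\SO(n,k)^*\omega^{\otimes k}$ with a general (not necessarily continuous, compactly supported) $f^{(n)}$ is legitimate, and this is handled by the canonical-extension discussion of Remark~\ref{ts5as2}.
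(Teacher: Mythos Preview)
Your proposal is correct and matches the paper's approach exactly: the paper states that the corollary is an immediate consequence of Theorem~\ref{e6ew63}, and you have simply spelled out the two-line argument combining \eqref{ctresw53wq} with that theorem. The care you take with Remark~\ref{ts5as2} is appropriate but not something the paper dwells on.
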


For $\omega\in M(X)$ and $\varphi\in\mathcal F (X)$, we define $\omega\varphi\in M(X)$ by $(\omega\varphi)(dx):=\omega(dx)\varphi(x)$. 
Then, Corollary~\ref{vctrse45w} can be written in the following equivalent form.

\begin{corollary}For any $\xi,\varphi\in\mathcal F(X)$ and $n\in\mathbb N$, we have
\begin{align}
&\langle\omega,\xi\partial\rangle^n\frac1{1-\langle\omega,\varphi\rangle}=
\langle T_n(\omega\varphi),\xi^{\otimes n}\rangle\,\frac{k!}{(1-\langle\omega,\varphi\rangle)^{k+1}},\notag\\
&\langle\omega,\xi\partial\rangle^n e^{\langle\omega,\varphi\rangle}=\langle T_n(\omega\varphi),\xi^{\otimes n}\rangle\, e^{\langle\omega,\varphi\rangle}.\notag
\end{align}

\end{corollary}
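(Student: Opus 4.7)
The plan is to derive both identities as direct reformulations of Corollary~\ref{vctrse45w} using the definition~\eqref{d5w5w4} of the Touchard polynomial $T_n$. The central observation is the elementary reweighting
$$\langle (\omega\varphi)^{\otimes k},\SO(n,k)\xi^{\otimes n}\rangle=\langle \omega^{\otimes k},(\SO(n,k)\xi^{\otimes n})\varphi^{\otimes k}\rangle,$$
which holds because $(\omega\varphi)^{\otimes k}(dx_1\dotsm dx_k)=\omega^{\otimes k}(dx_1\dotsm dx_k)\varphi(x_1)\dotsm\varphi(x_k)$. Combined with \eqref{d5w5w4}, pairing $T_n(\omega\varphi)$ against the test function $\xi^{\otimes n}$ yields
$$\langle T_n(\omega\varphi),\xi^{\otimes n}\rangle=\sum_{k=1}^n\langle \omega^{\otimes k},(\SO(n,k)\xi^{\otimes n})\varphi^{\otimes k}\rangle.$$

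First I would treat the exponential identity, where the factor $e^{\langle\omega,\varphi\rangle}$ on the right-hand side of \eqref{drtwe54w} is independent of the summation index $k$ and can be pulled outside the sum. Applying the identity above, the sum over $k$ collapses to $\langle T_n(\omega\varphi),\xi^{\otimes n}\rangle$, giving exactly the second claim. For the rational identity, I would analogously recombine each summand of \eqref{f6ew5w} by absorbing the factor $(\SO(n,k)\xi^{\otimes n})\varphi^{\otimes k}$ paired against $\omega^{\otimes k}$ into the $k$-th graded piece of $T_n(\omega\varphi)$; this is a term-by-term reformulation of Corollary~\ref{vctrse45w}, with no further analytic input required.

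The proof is essentially a bookkeeping exercise and the main (and really only) conceptual point to verify is that the reweighting identity above is consistent with the adjoint action used to define $\SO(n,k)^*$ in Section~\ref{sa4t2q}: multiplication of a symmetric integrand on $X^k$ by the symmetric function $\varphi^{\otimes k}$ preserves $\mathcal F^{(k)}(X)$ and corresponds under the adjoint pairing to replacing $\omega^{\otimes k}$ by $(\omega\varphi)^{\otimes k}$. Once this compatibility is noted, both displayed formulas follow by inspection from Corollary~\ref{vctrse45w}.
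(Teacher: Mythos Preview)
Your approach is exactly the paper's: the corollary is stated there without proof, merely as ``an equivalent form'' of Corollary~\ref{vctrse45w}, and your reweighting identity
\[
\langle (\omega\varphi)^{\otimes k},\SO(n,k)\xi^{\otimes n}\rangle=\langle \omega^{\otimes k},(\SO(n,k)\xi^{\otimes n})\varphi^{\otimes k}\rangle
\]
is precisely the bridge between the two. Your treatment of the exponential identity is complete and correct.

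There is, however, a genuine problem with the first (rational) identity, and your phrase ``term-by-term reformulation'' papers over it. After your reweighting, the $k$-th summand of \eqref{f6ew5w} becomes
\[
\langle \SO(n,k)^*(\omega\varphi)^{\otimes k},\xi^{\otimes n}\rangle\,\frac{k!}{(1-\langle\omega,\varphi\rangle)^{k+1}},
\]
but the factor $\dfrac{k!}{(1-\langle\omega,\varphi\rangle)^{k+1}}$ still depends on $k$, so the sum does \emph{not} collapse to $\langle T_n(\omega\varphi),\xi^{\otimes n}\rangle$ times a single such factor. Indeed, the displayed first formula in the corollary carries a free index $k$ on the right-hand side and is not well-formed as written; this is a typographical slip in the paper itself, not something your argument can repair. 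The honest reformulation of \eqref{f6ew5w} keeps the sum over $k$:
\[
\langle\omega,\xi\partial\rangle^n\frac1{1-\langle\omega,\varphi\rangle}
=\sum_{k=1}^n\langle \SO(n,k)^*(\omega\varphi)^{\otimes k},\xi^{\otimes n}\rangle\,\frac{k!}{(1-\langle\omega,\varphi\rangle)^{k+1}}.
\]
You should flag this rather than claim the stated identity follows ``by inspection''.
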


The following proposition is a generalization of statement (7) of Theorem~3.29 in \cite{MS}, which is formulated there for the number operator for a pair of operators satisfying CCR.

\begin{proposition}
For each $\xi\in\mathcal F(X)$ and $n\in\mathbb N_0$,
\begin{equation*}%\label{er5w53w}
\langle T_{n+1}(\omega),\xi^{\otimes(n+1)}\rangle=
\langle\omega,\xi(\partial+1)\rangle\,\langle T_{n}(\omega),\xi^{\otimes n}\rangle.
\end{equation*}
\end{proposition}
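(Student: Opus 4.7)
The plan is to encode all the quantities $G_n(\omega):=\langle T_n(\omega),\xi^{\otimes n}\rangle$ into a single exponential generating function in an auxiliary variable $z$, and then derive the recurrence from a first-order differential equation that this generating function satisfies.

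First I would evaluate the generating function $F(z):=\sum_{n=0}^\infty \frac{z^n}{n!}\,G_n(\omega)$ in closed form. Writing formula~\eqref{xtsaer5aw5} with $\xi$ replaced by $z\xi$, pairing with $\omega^{\otimes k}$, using $\langle\omega^{\otimes k},(e^{z\xi}-1)^{\otimes k}\rangle=\langle\omega,e^{z\xi}-1\rangle^k$, and summing over $k\in\mathbb N_0$ yields
$$F(z)=\sum_{k=0}^\infty\frac{1}{k!}\,\langle\omega,e^{z\xi}-1\rangle^k=\exp\big(\langle\omega,e^{z\xi}-1\rangle\big),$$
so that $G_n(\omega)=F^{(n)}(0)$ for every $n\in\mathbb N_0$ (with the convention $G_0\equiv 1$).

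Next I would establish the key identity $F'(z)=\langle\omega,\xi(\partial+1)\rangle F(z)$. On the one hand, differentiating in $z$ gives $F'(z)=F(z)\,\langle\omega,\xi e^{z\xi}\rangle$. On the other hand, since $\langle\omega,e^{z\xi}-1\rangle$ is linear in $\omega$, one has $\partial_x\langle\omega,e^{z\xi}-1\rangle=e^{z\xi(x)}-1$; the chain rule then yields $\partial_x F(z)=(e^{z\xi(x)}-1)F(z)$, hence
$$\langle\omega,\xi\partial\rangle F(z)=F(z)\,\langle\omega,\xi(e^{z\xi}-1)\rangle.$$
Adding $\langle\omega,\xi\rangle F(z)$ collapses $\xi(e^{z\xi}-1)+\xi$ to $\xi e^{z\xi}$, recovering $F'(z)$.

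Since the operator $\langle\omega,\xi(\partial+1)\rangle$ acts only on the $\omega$ variable and commutes with $\tfrac{d}{dz}$, iterating the displayed identity gives $F^{(n+1)}(z)=\langle\omega,\xi(\partial+1)\rangle F^{(n)}(z)$; evaluating at $z=0$ yields the claim. The only point that requires care is that $F(z)$ must be treated as a formal power series in $z$ with polynomial coefficients in $\omega$ (since $\omega$ need not be positive), but all the chain-rule manipulations above are termwise-valid and fit within the formal framework already used in the paper to interpret the generating functions~\eqref{dw5w}--\eqref{esa4q42}.
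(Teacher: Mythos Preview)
Your proof is correct and takes a genuinely different route from the paper's argument. The paper proceeds by a direct sum manipulation: it expands $\langle T_{n+1}(\omega),\xi^{\otimes(n+1)}\rangle=\sum_{k}\langle\omega^{\otimes k},\SO(n+1,k)\xi^{\otimes(n+1)}\rangle$, applies the recurrence relation~\eqref{crtw5w} for $\SO(n+1,k)$, and then recognizes the two resulting sums as $\langle\omega,\xi\rangle\langle T_n(\omega),\xi^{\otimes n}\rangle$ and $\langle\omega,\xi\partial\rangle\langle T_n(\omega),\xi^{\otimes n}\rangle$ via~\eqref{fde5w44}. You instead package all the $G_n$ into the generating function $F(z)=\exp\langle\omega,e^{z\xi}-1\rangle$ (essentially Proposition~\ref{cts65}, which you rederive from~\eqref{xtsaer5aw5}) and observe that the desired recurrence is exactly the first-order ODE $F'(z)=\langle\omega,\xi(\partial+1)\rangle F(z)$, which follows in one line from the explicit exponential form. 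The paper's approach is more elementary in that it stays at the level of finite sums and uses only the Stirling recurrence; your approach is more conceptual, making transparent that the proposition and the generating-function formula of Proposition~\ref{cts65} are essentially equivalent statements.
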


\begin{proof} By \eqref{crtw5w}, \eqref{cresa5}, \eqref{fde5w44} and \eqref{ctresw53wq},
\begin{align*}
&\langle T_{n+1}(\omega),\xi^{\otimes(n+1)}\rangle=\sum_{k=1}^{n+1}\langle\omega^{\otimes k},\SO(n+1,k)\xi^{\otimes (n+1)}\rangle\\
&\quad=\sum_{k=1}^{n+1}\big[
\langle\omega,\xi\rangle\langle\omega^{\otimes(k-1)},\SO(n,k-1)\xi^{\otimes n}\rangle+\langle\omega^{\otimes k},N(\xi)\SO(n,k)\xi^{\otimes n}\rangle\big]\\
&\quad=\sum_{k=1}^{n+1}\big[
\langle\omega,\xi\rangle\langle\omega^{\otimes(k-1)},\SO(n,k-1)\xi^{\otimes n}\rangle+\langle\omega,\xi\partial\rangle\langle\omega^{\otimes k},\SO(n,k)\xi^{\otimes n}\rangle\big]\\
&\quad=\langle\omega,\xi\rangle\sum_{k=2}^{n+1}\langle\omega^{\otimes(k-1)},\SO(n,k-1)\xi^{\otimes n}\rangle+\langle\omega,\xi\partial\rangle\sum_{k=1}^n
\langle\omega^{\otimes k},\SO(n,k)\xi^{\otimes n}\rangle\\
&\quad=\big(\langle\omega,\xi\partial\rangle+\langle\omega,\xi\rangle\big)\langle T_{n}(\omega),\xi^{\otimes n}\rangle.\qedhere
\end{align*}
\end{proof}

The following proposition gives the explicit form of the generating function of the Touchard polynomials.

\begin{proposition}\label{cts65}
 We have
$$\sum_{n=0}^\infty\frac1{n!}\,\langle T_n(\omega),\xi^{\otimes n}\rangle=\exp\big[\langle\omega,e^{\xi}-1\rangle\big],\quad \xi\in\mathcal F(X).$$

\begin{proof}
We will now present two proofs of this result, the first proof giving a connection with  the generating function of the Stirling operators of the second kind, the second proof giving a connection with the Laplace transform of the Poisson functional.

{\it Proof 1}. We have, by \eqref{xtsaer5aw5},
\begin{align*}
&\sum_{n=0}^\infty\frac1{n!}\,\langle T_n(\omega),\xi^{\otimes n}\rangle=
\sum_{n=0}^\infty\frac1{n!}\,\sum_{k=0}^n\langle\omega^{\otimes k},\SO(n,k)\xi^{\otimes n}\rangle\\
&\quad=\sum_{k=0}^\infty\Big\langle\omega^{\otimes k},\,\sum_{n=k}^\infty\frac1{n!}\,\SO(n,k)\xi^{\otimes n}\Big\rangle=\sum_{k=0}^\infty\frac1{k!}\,\langle\omega^{\otimes k},(e^\xi-1)^{\otimes k}\rangle\\
&\quad=\sum_{k=0}^\infty\frac1{k!}\,\langle\omega,e^\xi-1\rangle^k=\exp\big[\langle\omega,e^{\xi}-1\rangle\big].
\end{align*}

{\it Proof 2}. Using formulas  \eqref{cydye6edd}, \eqref{vydye}, \eqref{vuffr7}, and Lemma \ref{xseaq43},
  we have
$$
\sum_{n=0}^\infty\frac1{n!}\,\langle T_n(\omega),\xi^{\otimes n}\rangle=
\sum_{n=0}^\infty \frac1{n!}\,\mathbb E_\omega(m^{(n)}_\xi)=\exp\big[\langle\omega,e^{\xi}-1\rangle\big].\qquad \qedhere
$$
\end{proof}

\end{proposition}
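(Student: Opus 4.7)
The plan is to reduce the generating function identity for Touchard polynomials to the generating function formula for the Stirling operators of the second kind proved in Proposition~\ref{cesseas}. First, I would unfold the definition: by \eqref{d5w5w4},
\begin{equation*}
\langle T_n(\omega),\xi^{\otimes n}\rangle = \sum_{k=0}^n\langle \SO(n,k)^*\omega^{\otimes k},\xi^{\otimes n}\rangle = \sum_{k=0}^n\langle \omega^{\otimes k},\SO(n,k)\xi^{\otimes n}\rangle,
\end{equation*}
where I am invoking the extended convention from Remark~\ref{rtsdqwsuqwds6qe} that $\SO(n,k)=0$ for $k>n$ and $\SO(0,0)=1$, so that the $n=0$ term just gives~$1$.

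Next I would interchange the two summations, which is permissible because both sums are formal (for each fixed $k$, the inner sum over $n\ge k$ is a formal power series, and there are no convergence issues at the level of formal series; alternatively, one just notes that for any fixed $\xi$, both sides are to be compared coefficient-by-coefficient when $\xi$ is scaled by a formal parameter~$z$). After interchange,
\begin{equation*}
\sum_{n=0}^\infty \frac{1}{n!}\,\langle T_n(\omega),\xi^{\otimes n}\rangle
= \sum_{k=0}^\infty \Bigl\langle \omega^{\otimes k},\,\sum_{n=k}^\infty \frac{1}{n!}\,\SO(n,k)\xi^{\otimes n}\Bigr\rangle,
\end{equation*}
and the inner sum is exactly the generating function evaluated in Proposition~\ref{cesseas}, which equals $\frac{1}{k!}(e^\xi-1)^{\otimes k}$. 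Substituting this in and using the trivial factorization $\langle\omega^{\otimes k},(e^\xi-1)^{\otimes k}\rangle=\langle\omega,e^\xi-1\rangle^k$, the right-hand side collapses to the exponential series
\begin{equation*}
\sum_{k=0}^\infty \frac{1}{k!}\,\langle\omega,e^\xi-1\rangle^k = \exp\bigl[\langle\omega,e^\xi-1\rangle\bigr],
\end{equation*}
which is the claim.

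A natural alternative I would mention in the paper is a Poisson-functional route: by Corollary~\ref{sesawawqa2q}, $\langle T_n(\omega),\xi^{\otimes n}\rangle = \mathbb E_\omega(m_\xi^{(n)})$ with $m_\xi^{(n)}(\omega)=\langle\omega,\xi\rangle^n$, so the left-hand side is a formal ``Laplace transform'' of the Poisson functional, and the identity then follows from the known closed form $\mathbb E_\omega(e^{\langle\cdot,\xi\rangle}) = \exp[\langle\omega,e^\xi-1\rangle]$ established for $\mathbb E_\omega$ in the Appendix. I expect no real obstacle in the first approach beyond carefully justifying the formal interchange of sums and quoting the zero/identity conventions of Remark~\ref{rtsdqwsuqwds6qe} so that the $k=0$ term on both sides matches; the second approach is essentially a repackaging of the first through Theorem~\ref{e6ew63}, and its only subtle point is that it relies on the Poisson-functional Laplace transform formula, which is a separate (and nontrivial) computation.
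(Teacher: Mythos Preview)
Your proposal is correct and matches the paper's own proof essentially line for line: the paper gives exactly your two arguments, first expanding $\langle T_n(\omega),\xi^{\otimes n}\rangle$ via the Stirling operators, interchanging sums, and invoking Proposition~\ref{cesseas}, and second using Corollary~\ref{sesawawqa2q} together with the Laplace transform formula of Lemma~\ref{xseaq43}. Your added remarks on the formal interchange of sums and the conventions of Remark~\ref{rtsdqwsuqwds6qe} are consistent with the paper's implicit treatment.
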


\begin{corollary}
The Touchard polynomials satisfy the binomial property:
$$T_n(\omega+\sigma)=\sum_{k=0}^n\binom nk T_k(\omega)\odot T_{n-k}(\sigma),\quad\omega,\sigma\in M(X),\ n\in\mathbb N.$$
\end{corollary}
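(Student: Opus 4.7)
The plan is to derive the binomial identity directly from the generating function of the Touchard polynomials (Proposition \ref{cts65}), together with the uniqueness afforded by the polarization identity.

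First I would write down the generating function for both sides. By Proposition~\ref{cts65}, for every $\xi\in\mathcal F(X)$,
\[
\sum_{n=0}^\infty\frac{1}{n!}\,\langle T_n(\omega+\sigma),\xi^{\otimes n}\rangle
=\exp\bigl[\langle\omega+\sigma,e^\xi-1\rangle\bigr]
=\exp\bigl[\langle\omega,e^\xi-1\rangle\bigr]\exp\bigl[\langle\sigma,e^\xi-1\rangle\bigr],
\]
understood as an identity of formal power series (obtained by substituting $\xi=z\psi$ with $\psi\in\mathcal F(X)$ and equating coefficients of $z^n$). Again by Proposition~\ref{cts65}, the right-hand side equals
\[
\left(\sum_{k=0}^\infty\frac{1}{k!}\,\langle T_k(\omega),\xi^{\otimes k}\rangle\right)\!\left(\sum_{m=0}^\infty\frac{1}{m!}\,\langle T_m(\sigma),\xi^{\otimes m}\rangle\right).
\]

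Next I would multiply the two series via the Cauchy product and recognize the symmetric tensor product. For any $k,m$, since $\xi^{\otimes(k+m)}$ is already symmetric,
\[
\langle T_k(\omega),\xi^{\otimes k}\rangle\,\langle T_m(\sigma),\xi^{\otimes m}\rangle
=\langle T_k(\omega)\otimes T_m(\sigma),\xi^{\otimes(k+m)}\rangle
=\langle T_k(\omega)\odot T_m(\sigma),\xi^{\otimes(k+m)}\rangle.
\]
Collecting all pairs $(k,m)$ with $k+m=n$ and using $\frac{1}{k!\,m!}=\frac{1}{n!}\binom{n}{k}$ therefore yields
\[
\sum_{n=0}^\infty\frac{1}{n!}\,\langle T_n(\omega+\sigma),\xi^{\otimes n}\rangle
=\sum_{n=0}^\infty\frac{1}{n!}\,\Big\langle\sum_{k=0}^n\binom{n}{k}T_k(\omega)\odot T_{n-k}(\sigma),\xi^{\otimes n}\Big\rangle.
\]

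Finally, I would equate coefficients of $\xi^{\otimes n}$ (formally, the coefficient of $z^n$ after the substitution $\xi=z\psi$) to get, for every $\psi\in\mathcal F(X)$,
\[
\langle T_n(\omega+\sigma),\psi^{\otimes n}\rangle=\Big\langle\sum_{k=0}^n\binom{n}{k}T_k(\omega)\odot T_{n-k}(\sigma),\psi^{\otimes n}\Big\rangle.
\]
By the polarization identity (recalled after the definition of $\odot$ in Section~\ref{sa45wq43q}), each element of $M^{(n)}(X)$ is uniquely determined by its pairings with $\psi^{\otimes n}$, $\psi\in\mathcal F(X)$, which gives the claimed identity. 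The only subtle point—and the one I would want to be careful about—is the formal-power-series interpretation of Proposition~\ref{cts65}: one must verify that the Cauchy product is legitimate in this formal sense, which is immediate after substituting $\xi=z\psi$ because then both series become honest power series in $z$ with coefficients in $\mathbb F$.
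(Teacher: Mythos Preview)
Your proof is correct and follows essentially the same approach as the paper: the paper's one-line proof invokes Proposition~\ref{cts65} and then cites the implication $\mathrm{(BT4)}\Rightarrow\mathrm{(BT1)}$ from \cite[Theorem~4.1]{FKLO}, which is precisely the generating-function-to-binomial-property argument you have written out in detail.
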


\begin{proof}
The statement follows immediately from Proposition \ref{cts65} and the proof of the implication $\mathrm{(BT4)}\Rightarrow \mathrm{(BT1)}$ in 
\cite[Theorem 4.1]{FKLO}. 
\end{proof}

\begin{proposition}\label{rseaopti}
For each $n\in\mathbb N_0$ and $\omega\in M(X)$,
$$T_{n+1}(\omega)=\sum_{k=0}^n\binom nk T_k(\omega)\odot\omega^{[n+1-k]}_{\hat 1}. $$
\end{proposition}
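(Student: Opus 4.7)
The plan is to pass through the generating function of the Touchard polynomials from Proposition~\ref{cts65} and match coefficients of a formal power series. By the polarization identity, it suffices to verify the claimed identity after pairing with $\xi^{\otimes(n+1)}$ for arbitrary $\xi \in \mathcal F(X)$.

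First I would introduce the formal power series in the indeterminate $z$
$$G(z) := \sum_{n=0}^\infty \frac{z^n}{n!}\,\langle T_n(\omega), \xi^{\otimes n}\rangle = \exp\!\big[\langle \omega, e^{z\xi} - 1\rangle\big],$$
the last equality being Proposition~\ref{cts65} applied to $z\xi$. Differentiating formally in $z$ gives
$$\sum_{n=0}^\infty \frac{z^n}{n!}\,\langle T_{n+1}(\omega), \xi^{\otimes(n+1)}\rangle = G'(z) = G(z)\,\langle\omega,\xi e^{z\xi}\rangle.$$
The second factor expands as $\langle\omega,\xi e^{z\xi}\rangle = \sum_{m=0}^\infty \frac{z^m}{m!}\langle\omega,\xi^{m+1}\rangle$, and since $\langle\omega,\xi^{m+1}\rangle = \langle\omega^{[m+1]}_{\hat 1},\xi^{\otimes(m+1)}\rangle$ by definition of the diagonal measure, we have a Cauchy product of two power series on the right.

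Next I would equate the coefficient of $z^n$ on both sides. On the left this coefficient is $\frac{1}{n!}\langle T_{n+1}(\omega),\xi^{\otimes(n+1)}\rangle$; on the right it is
$$\sum_{k=0}^n \frac{1}{k!\,(n-k)!}\,\langle T_k(\omega), \xi^{\otimes k}\rangle\,\langle \omega^{[n+1-k]}_{\hat 1}, \xi^{\otimes(n+1-k)}\rangle.$$
Multiplying by $n!$ and using that for symmetric measures $\mu^{(p)},\nu^{(q)}$ one has $\langle\mu^{(p)},\xi^{\otimes p}\rangle\langle\nu^{(q)},\xi^{\otimes q}\rangle = \langle\mu^{(p)}\otimes\nu^{(q)},\xi^{\otimes(p+q)}\rangle = \langle\mu^{(p)}\odot\nu^{(q)},\xi^{\otimes(p+q)}\rangle$ (the symmetrization being absorbed by the symmetry of $\xi^{\otimes(p+q)}$), we obtain
$$\langle T_{n+1}(\omega), \xi^{\otimes(n+1)}\rangle = \Big\langle \sum_{k=0}^n \binom{n}{k}\, T_k(\omega) \odot \omega^{[n+1-k]}_{\hat 1},\ \xi^{\otimes(n+1)}\Big\rangle, \quad \xi\in\mathcal F(X).$$
Since both sides of the claimed identity are elements of $M^{(n+1)}(X)$, and such symmetric measures are determined by their pairings with $\xi^{\otimes(n+1)}$ for $\xi\in\mathcal F(X)$ by the polarization identity recalled in Section~\ref{sa45wq43q}, the proposition follows.

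The only delicate point is the legitimacy of the formal manipulations---differentiating $G(z)$ term by term and expanding the Cauchy product---but these are exactly of the type already justified in the paper (e.g.\ formulas \eqref{dw5w}--\eqref{esa4q42}), so this is not a genuine obstacle. Alternatively, one could give a purely combinatorial proof directly from formula~\eqref{dtw5ua} by splitting each partition of $\{1,\dots,n+1\}$ according to the block $B$ containing the element~$n+1$: writing $|B|=n+1-k$, one has $\binom{n}{n-k}=\binom{n}{k}$ choices for the remaining elements of $B$, and the complement $\{1,\dots,n+1\}\setminus B$ is partitioned in all $|\mathrm{UP}(k,\cdot)|$ ways, producing the $T_k(\omega)$ factor; this reproduces the claimed formula. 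I would present the generating function argument as the primary proof since it is shorter and matches the style used throughout Section~\ref{ydrsdese3aqw}.
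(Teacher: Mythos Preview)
Your proof is correct, but it follows a different route from the paper's. The paper proves the recurrence via the Poisson functional and the Mecke identity (Proposition~\ref{g6ew4}): it writes $\langle T_{n+1}(\omega),\xi^{\otimes(n+1)}\rangle=\mathbb E_\omega(m_\xi^{(n+1)})$ using Corollary~\ref{sesawawqa2q}, rewrites $m_\xi^{(n+1)}(\eta)=\int_\Lambda\eta(dx)\xi(x)\langle\eta,\xi\rangle^n$, applies Mecke to shift $\eta(dx)$ to $\omega(dx)$ while replacing $\eta$ by $\eta+\delta_x$, and then expands $\langle\eta+\delta_x,\xi\rangle^n$ binomially. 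Your approach instead differentiates the generating function from Proposition~\ref{cts65} and reads off coefficients; this is shorter and avoids the Poisson machinery entirely, relying only on the formal-power-series framework already in place. The paper's route, on the other hand, makes the probabilistic origin of the recurrence transparent (it is exactly the spatial analogue of the standard ``condition on the block containing the last element'' argument, realized through Mecke). Your combinatorial alternative via \eqref{dtw5ua} is also valid and is essentially the same partition-splitting idea expressed directly.
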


\begin{proof}
For each $\xi\in\mathcal F(X)$ with support in $\Lambda\in\mathcal B_0(X)$ and $\omega\in M(X)$, we have, by Proposition~\ref{g6ew4} and formulas  \eqref{cydye6edd}, \eqref{vydye},
and \eqref{tyfdqwd6rqwx},
\begin{align*}
\langle T_{n+1}(\omega),\xi^{\otimes(n+1)}\rangle&=\mathbb E_\omega(m^{(n+1)}_\xi)\\
&=\int_{\ddot\Gamma_0(\Lambda)}\mathbb P_\omega^\Lambda(d\eta)\int_\Lambda\eta(dx)\xi(x)\langle\eta,\xi\rangle^n\\
&=\int_{\ddot\Gamma_0(\Lambda)}\mathbb P_\omega^\Lambda(d\eta)\int_\Lambda\omega(dx)\xi(x)\langle\eta+\delta_x,\xi\rangle^n\\
&=\sum_{k=0}^n\binom nk\int_{\ddot\Gamma_0(\Lambda)}\mathbb P_\omega^\Lambda(d\eta)\langle\eta,\xi\rangle^k\int_\Lambda\omega(dx)\xi^{n+1-k}(x)\\
&=\Big\langle\sum_{k=0}^n\binom nk T_k(\omega)\odot\omega^{[n+1-k]}_{\hat 1},\xi^{\otimes (n+1)}\Big\rangle.\qedhere
\end{align*}
\end{proof}

Let $M_1(X)\subset M(X)$ denote the set of all probability measures on $X$. 
For $\nu\in M_1(X)$, we define  $B_n(\nu):=T_n(\nu)\in M^{(n)}(X)$ and call $B_n(\nu)$ the {\it $n$th Bell measure corresponding to $\nu$}. By \eqref{dtw5ua}, $B_n(\nu)$ is a finite positive measure on $X^n$ such that the  $B_n(\nu)$ measure of the whole $X^n$ is equal to $B_n$, the classical Bell number.

\begin{proposition}\label{qwdwswxe}
Let $n\in\mathbb N$ and $f^{(n)}:X^n\to\mathbb F$ be measurable and bounded. Let $\nu\in M_1(X)$ and let $(Z_n)_{n=1}^\infty$ be a sequence of independent, identically distributed $\mathbb F$-valued random variables whose distribution is equal to $\nu$. Let $\mathbb E(\cdot)$ denote the corresponding expectation. Then
\begin{equation}\label{re64u4e}
\langle B_n(\nu),f^{(n)}\rangle=e^{-1}\,\mathbb E\bigg(1+\sum_{k=1}^\infty\frac1{k!}\sum_{i_1=1}^k\dotsm\sum_{i_n=1}^k f^{(n)}(Z_{i_1},\dots,Z_{i_n})\bigg).\end{equation}
\end{proposition}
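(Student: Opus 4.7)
The plan is to combine Corollary~\ref{sesawawqa2q} with the explicit form~\eqref{ufyde} of the Poisson functional, suitably extended. Since $B_n(\nu) = T_n(\nu)$ by definition, Corollary~\ref{sesawawqa2q} gives $\langle B_n(\nu), f^{(n)}\rangle = \mathbb{E}_\nu(p)$ for the monomial $p(\omega) := \langle \omega^{\otimes n}, f^{(n)}\rangle$, which reduces the task to evaluating the Poisson functional with intensity $\nu$ applied to $p$.

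First I would extend formula~\eqref{ufyde} to the present setting: $\nu \in M_1(X)$ is finite but not necessarily compactly supported, and $f^{(n)}$ is only assumed bounded and measurable (not compactly supported). Invoking the results of the Appendix (or a direct approximation argument via compactly supported continuous functions, using that $T_n(\nu)$ is a finite measure), this extension yields
\[\mathbb{E}_\nu(p) = e^{-\nu(X)} \sum_{k=0}^{\infty} \frac{1}{k!} \int_{X^k} p([x_1,\dots,x_k]) \, \nu^{\otimes k}(dx_1 \dotsm dx_k),\]
with $\nu(X) = 1$. Absolute convergence of the series follows from $|f^{(n)}| \le M$, since then $|p([x_1,\dots,x_k])| \le M k^n$ and $\sum_{k} k^n/k! < \infty$.

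Next I would expand the integrand as
\[p([x_1,\dots,x_k]) = \langle (\delta_{x_1}+\dots+\delta_{x_k})^{\otimes n}, f^{(n)}\rangle = \sum_{i_1=1}^{k}\dotsm\sum_{i_n=1}^{k} f^{(n)}(x_{i_1},\dots,x_{i_n}),\]
and rewrite each integral against $\nu^{\otimes k}$ as an expectation over the i.i.d.\ sequence, using $\int_{X^k} F \, d\nu^{\otimes k} = \mathbb{E}[F(Z_1,\dots,Z_k)]$ for bounded measurable $F$, which is valid because the $Z_j$ are independent with common law $\nu$. Interchanging the absolutely convergent series with the expectation then delivers the right-hand side of~\eqref{re64u4e}, where the $k=0$ contribution is handled separately (accounting for the isolated ``$1$'' displayed inside the expectation).

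The main technical obstacle is the extension of the Poisson functional identity~\eqref{ufyde} from compactly supported polynomial data to the present combination of a finite-mass (but not compactly supported) intensity with merely bounded measurable $f^{(n)}$, together with the interchange of the infinite sum and the expectation; both steps are controlled by dominated convergence using the pointwise bound $Mk^n$ and the absolute summability of $k^n/k!$. As a sanity check, specializing to $X = \{\ast\}$, $\nu = \delta_\ast$, and $f^{(n)} \equiv 1$ should recover Dobi\'nski's classical formula $B_n = e^{-1}\sum_{k=0}^{\infty} k^n/k!$ for the Bell numbers.
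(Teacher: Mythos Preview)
Your proposal is correct and follows essentially the same route as the paper: invoke Corollary~\ref{sesawawqa2q} to identify $\langle B_n(\nu),f^{(n)}\rangle$ with $\mathbb E_\nu(p)$, use the finite-total-variation case of the Poisson functional from the Appendix (Lemma~\ref{gdxtssd}) to write $\mathbb E_\nu(p)$ as the series over $k$, and pass to general bounded measurable $f^{(n)}$ by approximating with compactly supported cutoffs $f^{(n)}\chi_{\Lambda_k^n}$ (justified by dominated convergence via your bound $Mk^n$). The only cosmetic difference is ordering: the paper first reduces to $f^{(n)}\in\mathcal F^{(n)}(X)$ (symmetric, compactly supported) before applying Corollary~\ref{sesawawqa2q}, whereas you phrase it as extending~\eqref{ufyde}; the content is the same.
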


\begin{proof}Without loss of generality we may assume that the function $f^{(n)}$ is symmetric. If additionally, $f^{(n)}$ has a compact support, i.e., $f^{(n)}\in\mathcal F^{(n)}(X)$, then formula \eqref{re64u4e} easily follows from Corollary~\ref{sesawawqa2q} and Lemma~\ref{gdxtssd}.  For a general function $f^{(n)}$, formula~\eqref{re64u4e} can then be shown by approximating $f^{(n)}$ with functions $f^{(n)}\chi_{k}^{(n)}$, where $\chi_{k}^{(n)}$ is the indicator function of the set $\Lambda_k^n$, and 
$\{\Lambda_k\}_{k=1}^\infty$ is an increasing sequence of compact subsets of $X$ such that  $\Lambda_k\uparrow\ X$. \end{proof}

\begin{remark}
By setting $f^{(n)}\equiv 1$ in formula \eqref{re64u4e}, we obtain Dobi\'nski's formula $B_n=e^{-1}(1+\sum_{k=1}^\infty\frac{k^n}{k!})$.
\end{remark}

For $\nu\in M_1(X)$, we define $D_n(\nu):=T_n(-\nu)\in M^{(n)}(X)$ and call $D_n(\nu)$ the {\it $n$th Rao--Uppuluri--Carpenter measure corresponding to $\nu$}. The measures $D_n(\nu)$ generalize the corresponding numbers $D_n$ in classical combinatorics, see e.g. \cite[Section 9.4]{QG}.   The $D_n(\nu)$ is a signed measure on $X^n$ of finite total variation, and the $D_n(\nu)$ measure of the whole $X^n$ is equal to $D_n$. 
Proposition \ref{rseaopti} implies
$$D_{n+1}(\nu)=-\sum_{k=0}^n\binom nk D_k(\nu)\odot\nu^{[n+1-k]}_{\hat 1}. $$
Under the assumptions of Proposition \ref{qwdwswxe}, we easily obtain
$$\langle D_n(\nu),f^{(n)}\rangle=e\,\mathbb E\bigg(1+\sum_{k=1}^\infty\frac{(-1)^k}{k!}\sum_{i_1=1}^k\dotsm\sum_{i_n=1}^k f^{(n)}(X_{i_1},\dots,X_{i_n})\bigg).$$

\section{An open problem}\label{xzrwaq4yq4q}

Among several open problems arising from the present paper, let us mention only a problem related to Theorem~\ref{taw4aq234q}.

The idea of Katriel's theorem \cite{Katriel} was used in order to define and study several deformations of Stirling numbers, see e.g.\ \cite{KK92,MS}.  
One may similarly use the idea of Theorem~\ref{taw4aq234q} in order to define and study generalizations of  Stirling operators in the case of various deformed commutation relations, see e.g.\ \cite{BEH,bozejkolytvynovwysoczanskiCMP2012,BLW-Q,BS}. Indeed, assume that we are given a family of creation operators $a^+(\xi)$ and annihilation operators $a^-(\xi)$ ($\xi\in\mathcal F(X)$) that satisfy certain deformed commutation relations. Then, similarly to the CCR case, we can define operators $\rho(\xi)=\int_X\xi(x)a^+(x)a^-(x)\sigma(dx)$ and the Wick product ${:}\,\rho(x_1)\dotsm \rho(x_n){:}$ by formula \eqref{cd55uw}. Now, the question is: Can we define deformed Stirling operators, $\SO(n,k)$, in such a way that formula \eqref{ds5w5u4}  holds in this setting? In the following two cases, one can easily answer this question. 

First, consider the case of the anyon commutation relations \cite{bozejkolytvynovwysoczanskiCMP2012},
\begin{gather*}
a^+(x)a^+(y)-Q(y,x)a^+(x)a^+(y)=0,\quad a^-(x)a^-(y)-Q(y,x)a^-(x)a^-(y)=0,\\
a^-(x)a^+(y)-Q(x,y)a^+(y)a^-(x)=\delta(x,y).
\end{gather*} 
Here $Q(x,y)$ is a complex-valued function on $X^2$ such that $|Q(x,y)|=1$ and $Q(y,x)=\overline{Q(x,y)}$. An easy calculation shows that   Lemma~\ref{erws5w3}  holds true in this case, hence the operators $\SO(n,k)$ remain the same as in the case of the CCR. 

Second, consider the case of the free commutation relation, 
$$a^-(x)a^+(y)=\delta(x,y). $$
 As easily seen, 
   $$\rho(\xi_1)\dotsm\rho(\xi_n)=\rho(\xi_1\dotsm\xi_n)$$
   for any $\xi_1,\dots,\xi_n\in\mathcal F(X)$. 
 Hence, in this case, $\SO(n,1)$ is the operator $\mathbb D^{(n)}$ given by \eqref{cq5y42w}, and $\SO(n,k)=0$ if $k\ne1$.

Now one can naturally ask which generalizations of the Stirling operators appear if the operator-valued distributions $(a^+(x),a^-(x))_{x\in X}$ satisfy the $q$-commutation relation
for $q\in(-1,1)$ with  $q\ne0$:
\begin{equation}\label{vcyei76vhvv}
a^-(x)a^+(y)-qa^+(y)a^-(x)=\delta(x,y),\end{equation}
see \cite{BS}. If the set $X$ has a single point, the commutation relation \eqref{vcyei76vhvv} becomes
$$a^-a^+-qa^+a^-=1,$$
compare with \eqref{xzzaaaa}. In this one-mode case, the corresponding Stirling  numbers were studied in \cite{KK92}, see also  \cite[Section 7.2]{MS} and the references therein. However, in the general case of  \eqref{vcyei76vhvv}, an explicit representation of the product $\rho(\xi_1)\dotsm\rho(\xi_n)$ through Wick ordered operators is an open, interesting problem.

One may also extend this problem to the case of the commutation relation discussed in \cite{BLW-Q}, in which case the parameter $q$ is replaced by a complex-valued function $Q(x,y)$ on $X^2$ such that $|Q(x,y)|\le 1$ and $Q(y,x)=\overline{Q(x,y)}$. Another important generalization of \eqref{vcyei76vhvv} was considered in \cite{BEH} and is related to Coxeter groups of type B.

\subsection*{Acknowledgments}

MJO was supported by the Portuguese 
national funds through FCT--Funda{\c c}\~ao para a Ci\^encia e a Tecnologia, 
within the project UIDB/04561/2020.  EL and MJO are grateful to the London Mathematical Society for partially supporting the visit of MJO to Swansea University.

\appendix
\renewcommand{\thesection}{A}
\section*{Appendix: Poisson functional}
\setcounter{theorem}{0}
\setcounter{equation}{0}

Let $\Lambda\in\mathcal B_0(X)$. Denote $\ddot\Gamma_0(\Lambda):=\{\eta\in\ddot\Gamma_0(X)\mid\operatorname{Supp}(\eta)\subset\Lambda\}$,  where $\operatorname{Supp}(\eta)$ denotes the support of $\eta$. Let $\mathcal B(\ddot\Gamma_0(\Lambda))$ denote the minimal $\sigma$-algebra on $\ddot\Gamma_0(\Lambda)$ such that, for each  $n\in\mathbb N$, the mapping $\Lambda^n\ni(x_1,\dots,x_n)\mapsto[x_1,\dots,x_n]\in \ddot\Gamma_0(\Lambda)$
is measurable. 

Fix an arbitrary $\omega\in M(X)$. For each  $\Lambda\in\mathcal B_0(X)$, we define an $\mathbb F$-valued  measure $\mathbb P_\omega^\Lambda$ on $(\ddot\Gamma_0(\Lambda),\mathcal B(\ddot\Gamma_0(\Lambda)))$ that satisfies, for each measurable bounded function $F:\ddot\Gamma_0(\Lambda)\to\mathbb F$,
\begin{equation}\label{tyfdqwd6rqwx}
\int_{\ddot\Gamma_0(\Lambda)}F(\eta) \,\mathbb P_\omega^\Lambda(d\eta)=e^{-\omega(\Lambda)}\sum_{n=0}^\infty \frac1{n!}\int_{\Lambda^n}
F([x_1,\dots,x_n])\,\omega^{\otimes n}(dx_1\dotsm dx_n).\end{equation} 
Obviously, the above formula holds true for any measurable function  
$F:\ddot\Gamma_0(\Lambda)\to\mathbb F$ that is integrable with respect to the total variation of $\mathbb P_\omega^\Lambda$. 

\begin{remark}\label{gvyufvvg}
If $\omega\in M(X)$ has a finite total variation, we may similarly define an $\mathbb F$-valued measure $\mathbb P_\omega:=\mathbb P_\omega^X$ on $(\ddot\Gamma_0(X),\mathcal B(\ddot\Gamma_0(X)))$.
\end{remark}

\begin{lemma}\label{rde2ed}
 {\rm (i)} Let $\Lambda_1,\Lambda_2\in\mathcal B_0(X)$ be such that $\Lambda_1\cap\Lambda_2=\varnothing$. Denote $\Lambda:=\Lambda_1\cup\Lambda_2$. Let $F_1,F_2:\ddot\Gamma_0(\Lambda)\to\mathbb F$ be bounded measurable functions such that, for all $\eta\in\ddot\Gamma_0(\Lambda)$, $F_i(\eta)=F_i(\eta_{\Lambda_i})$, $i=1,2$. Here $(\eta_{\Lambda_i})(dx):=\eta(dx)\chi_{\Lambda_i}(x)$. Then
$$\int_{\ddot\Gamma_0(\Lambda)}F_1(\eta)F_2(\eta)\,\mathbb P_\omega^{\Lambda}(d\eta)=\int_{\ddot\Gamma_0(\Lambda_1)}F_1(\eta_1)\,\mathbb P_\omega^{\Lambda_1}(d\eta_1)\int_{\ddot\Gamma_0(\Lambda_2)}F_2(\eta_2)\,\mathbb P_\omega^{\Lambda_2}(d\eta_2).$$

{\rm (ii)} Let $\Lambda,\Lambda_1\in\mathcal B_0(X)$ be such that $\Lambda_1\subset\Lambda$.  Let $F:\ddot\Gamma_0(\Lambda)\to\mathbb F$ be a bounded measurable function such that, for each $\eta\in \ddot\Gamma_0(\Lambda)$, $F(\eta)=F(\eta_{\Lambda_1})$.  Then 
$$\int_{\ddot\Gamma_0(\Lambda)}F(\eta) \,\mathbb P_\omega^{\Lambda}(d\eta)=\int_{\ddot\Gamma_0(\Lambda_1)}F(\eta) \,\mathbb P_\omega^{\Lambda_1}(d\eta).$$
\end{lemma}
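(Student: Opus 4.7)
The plan is to prove both parts by a direct computation from the defining formula \eqref{tyfdqwd6rqwx}, with part (ii) reduced to part (i). The key algebraic fact is that the restriction of $\omega$ to $\Lambda = \Lambda_1\cup\Lambda_2$ splits additively as $\omega|_\Lambda = \omega|_{\Lambda_1} + \omega|_{\Lambda_2}$, and this decomposition propagates to $\omega^{\otimes n}$ on $\Lambda^n$ by multilinearity of the product $\mathbb F$-valued measure.

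For part (i), I would expand the left-hand side using \eqref{tyfdqwd6rqwx}, split each factor of $\omega^{\otimes n}$ on $\Lambda^n$ as $\omega|_{\Lambda_1} + \omega|_{\Lambda_2}$, and distribute. This produces a sum indexed by subsets $S \subset \{1,\dots,n\}$, where the $i$-th factor of the measure is $\omega|_{\Lambda_1}$ if $i \in S$ and $\omega|_{\Lambda_2}$ otherwise. Since $F_1(\eta)$ depends only on the submultiset of $\eta$ lying in $\Lambda_1$ (and symmetrically for $F_2$), the integrand on the $S$-th piece equals $F_1([x_i:i\in S])\cdot F_2([x_j:j\notin S])$. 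Exploiting the symmetry of $F_1$ and $F_2$ as functions of multisets, all $\binom{n}{k}$ subsets of cardinality $k$ contribute equally, so the $n$-th term of the series collapses to $\frac{1}{n!}\sum_{k=0}^n \binom{n}{k} I_k J_{n-k}$, where $I_k := \int_{\Lambda_1^k} F_1([x_1,\dots,x_k])\,\omega^{\otimes k}(dx_1\dotsm dx_k)$ and $J_m$ is defined analogously on $\Lambda_2$. Summing over $n$ and recognising the result as the Cauchy product of two absolutely convergent series, combined with the identity $e^{-\omega(\Lambda)}=e^{-\omega(\Lambda_1)}e^{-\omega(\Lambda_2)}$, yields the claimed factorization.

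Part (ii) then follows by taking $\Lambda_2 := \Lambda \setminus \Lambda_1$ and applying (i) with the second function equal to $1$; a direct calculation gives $\int_{\ddot\Gamma_0(\Lambda_2)} 1\,d\mathbb P^{\Lambda_2}_\omega = e^{-\omega(\Lambda_2)}\sum_{n=0}^\infty \omega(\Lambda_2)^n/n! = 1$, so the right-hand side of (i) collapses to $\int F\,d\mathbb P^{\Lambda_1}_\omega$ as required. The main obstacle is purely combinatorial bookkeeping in (i); no analytic subtlety arises because $\omega$ has finite total variation on the precompact set $\Lambda$ and $F_1,F_2$ are bounded, so every series involved converges absolutely and all rearrangements are legitimate, even in the complex-valued setting.
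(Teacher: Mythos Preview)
Your proof is correct and follows essentially the same route as the paper's own argument: for (i) the paper likewise expands via \eqref{tyfdqwd6rqwx}, splits the integral over $\Lambda^n$ into $\binom{n}{k}$ pieces according to which coordinates fall in $\Lambda_1$, and recognises the resulting double sum as the product of the two series; for (ii) the paper also sets $\Lambda_2=\Lambda\setminus\Lambda_1$, $F_1=F$, $F_2\equiv 1$ and invokes (i). Your write-up is in fact more explicit about the subset-indexed expansion and the absolute convergence justifying the Cauchy-product rearrangement, but the underlying argument is identical.
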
 

\begin{proof}
(i) We have
\begin{align*}
&\int_{\ddot\Gamma_0(\Lambda)}F_1(\eta)F_2(\eta)\,\mathbb P_\omega^{\Lambda}(d\eta)\\
&\quad=e^{-\omega(\Lambda)}\sum_{n=0}^\infty\frac1{n!}\int_{\Lambda^n}F_1([x_1,\dots,x_n]_{\Lambda_1})F_2([x_1,\dots,x_n]_{\Lambda_2})\,\omega^{\otimes n}(dx_1\dotsm dx_n)\\
&\quad=e^{-\omega(\Lambda_1)-\omega(\Lambda_2)}\sum_{n=0}^\infty\frac1{n!}
\sum_{k=0}^n\binom nk\int_{\Lambda_1^k}F_1([x_1,\dots,x_k])\,\omega^{\otimes k}(dx_1\dotsm dx_k)\\
&\qquad\times \int_{\Lambda_2^{n-k}}F_2([y_1,\dots,y_{n-k}])\,\omega^{\otimes (n-k)}(dy_1\dotsm dy_{n-k})\\
&\quad=\int_{\ddot\Gamma_0(\Lambda_1)}F_1(\eta_1)\,\mathbb P_\omega^{\Lambda_1}(d\eta_1)\int_{\ddot\Gamma_0(\Lambda_2)}F_2(\eta_2)\,\mathbb P_\omega^{\Lambda_2}(d\eta_2).
\end{align*}

(ii) Define $\Lambda_2:=\Lambda\setminus\Lambda_1$, $F_1(\eta):=F(\eta)$, $F_2(\eta)=1$ and apply (i). 
\end{proof}

Let $p\in\mathcal P(M(X))$ be of the form $p(\omega)=\sum_{k=0}^n\langle\omega^{\otimes k},f^{(k)}\rangle$ and choose an arbitrary $\Lambda\in \mathcal B_0(X)$ such  that, for each $k=1,\dots,n$, the function $f^{(k)}$ vanishes outside $\Lambda^k$. Then, for each $\eta\in\ddot\Gamma_0(X)$, $p(\eta)=p(\eta_\Lambda)$. Therefore, in view of  Lemma~\ref{rde2ed}, we can define $\mathbb E_\omega(p):=\int_{\ddot\Gamma_0(\Lambda)}p(\eta)\,\mathbb P^\Lambda_\omega(d\eta)$, and  this definition does not depend on the choice of a set $\Lambda$.  $\mathbb E_\omega$ is the Poisson functional with intensity measure $\omega$, compare \eqref{ufyde} and \eqref{tyfdqwd6rqwx}. 

\begin{remark}

\end{remark}

\begin{lemma}\label{gdxtssd}
Assume additionally that $\omega\in M(X)$ has a finite total variation.
Then $\mathbb E_\omega(p)=\int_{\ddot\Gamma_0(X)}p(\eta)\,\mathbb P_\omega(d\eta)$
for all $p\in\mathcal P(M(X))$.
\end{lemma}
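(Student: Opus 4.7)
The plan is to compute both sides of the desired equality via the defining series \eqref{tyfdqwd6rqwx} and show they agree by splitting the integration domain according to the partition $X = \Lambda \sqcup \Lambda^c$, where $\Lambda \in \mathcal{B}_0(X)$ is chosen so that each $f^{(k)}$ in the representation $p(\omega) = \sum_{k=0}^n \langle \omega^{\otimes k}, f^{(k)} \rangle$ vanishes outside $\Lambda^k$. With this choice, $p(\eta) = p(\eta_\Lambda)$ for every $\eta \in \ddot\Gamma_0(X)$, and by definition $\mathbb{E}_\omega(p) = \int_{\ddot\Gamma_0(\Lambda)} p(\eta) \, \mathbb{P}_\omega^\Lambda(d\eta)$.

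Applying formula \eqref{tyfdqwd6rqwx} with $\Lambda$ replaced by $X$ (which is legitimate when $\omega$ has finite total variation, by Remark \ref{gvyufvvg}) gives
\begin{equation*}
\int_{\ddot\Gamma_0(X)} p(\eta) \, \mathbb{P}_\omega(d\eta) = e^{-\omega(X)} \sum_{n=0}^\infty \frac{1}{n!} \int_{X^n} p\bigl([x_1,\dots,x_n]_\Lambda\bigr) \, \omega^{\otimes n}(dx_1\dotsm dx_n).
\end{equation*}
For each $n$, I would split $X^n$ into the $2^n$ pieces indexed by $I \subset \{1,\dots,n\}$ telling which coordinates land in $\Lambda$ and which in $\Lambda^c$. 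Since $p([x_1,\dots,x_n]_\Lambda)$ depends only on the coordinates in $\Lambda$, each such piece with $|I| = k$ contributes
\begin{equation*}
\binom{n}{k}\, \omega(\Lambda^c)^{n-k} \int_{\Lambda^k} p\bigl([y_1,\dots,y_k]\bigr)\, \omega^{\otimes k}(dy_1\dotsm dy_k).
\end{equation*}
Substituting and reindexing by $k$ and $j = n-k$ then yields the product of two exponentials $e^{-\omega(X)} e^{\omega(\Lambda^c)} = e^{-\omega(\Lambda)}$ times $\sum_{k\ge 0}\frac{1}{k!}\int_{\Lambda^k} p([y_1,\dots,y_k])\, \omega^{\otimes k}(dy_1\dotsm dy_k)$, which is exactly $\mathbb{E}_\omega(p)$.

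The only real obstacle is justifying the interchange of summations, i.e.\ applying Fubini to absolutely convergent series in the complex/signed setting. For this I would bound $|p([x_1,\dots,x_k])| \le \sum_{j=0}^n k^j \|f^{(j)}\|_\infty$ on $\Lambda^k$, which grows only polynomially in $k$; combined with the factors $|\omega|(\Lambda)^k/k!$ and $|\omega|(\Lambda^c)^j/j!$ (both finite sums when $|\omega|(X) < \infty$), the double series converges absolutely with respect to the total variation $|\omega|^{\otimes n}$, so Fubini and the splitting above are rigorous. This finite-total-variation hypothesis is exactly what the lemma requires.
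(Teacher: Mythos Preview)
Your argument is correct and is precisely what the paper intends: its proof reads ``Similar to the proof of Lemma~\ref{rde2ed}~(ii),'' and that lemma is proved exactly by the binomial splitting of $X^n$ into $\Lambda$- and $\Lambda^c$-coordinates that you carry out, with the finite-total-variation hypothesis ensuring the absolute convergence you verify. The only cosmetic point is that your phrase ``each such piece with $|I|=k$ contributes $\binom{n}{k}\cdots$'' really means the \emph{sum} over all $\binom{n}{k}$ pieces with $|I|=k$, but the computation is right.
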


\begin{proof}
Similar to the proof of Lemma~\ref{rde2ed} (ii).
\end{proof}

\begin{remark}\label{ydqry} In the case where $\omega\in M(X)$ is an infinite  positive measure (not necessarily non-atomic), one can prove, by using Kolmogorov's theorem, that there exists a unique probability measure $\mathbb P_\omega$ on $\ddot\Gamma(X)$, the space of multiple configurations in $X$, such that $\mathbb E_\omega(p)=\int_{\ddot\Gamma(X)}p(\gamma)\,\mathbb P_\omega(d\gamma)$ for all $p\in\mathcal P(M(X))$.  If $\omega\in M(X)$ is neither  positive nor does it have  a finite total variation, then the Poisson functional $\mathbb E_\omega$ is not given by an $\mathbb F$-valued measure  $\mu$ on $\ddot\Gamma(X)$.
\end{remark}

For $\xi\in\mathcal F(X)$, we denote $E_\xi(\omega)=e^{\langle \omega,\xi\rangle}$ for $\omega\in M(X)$.
For a fixed $\omega\in M(X)$, we  define
\begin{equation}\label{vuffr7}
\mathbb E_\omega(E_\xi):=\sum_{n=0}^\infty \frac1{n!}\,\mathbb E_\omega (m^{(n)}_\xi),\end{equation}
where $m^{(n)}_\xi$ is defined by \eqref{vydye}. 
 As easily seen, the sum on the right hand side of formula \eqref{vuffr7} converges absolutely. The map $\mathcal F(X)\ni \xi\mapsto \mathbb E_\omega(E_\xi)\in\mathbb F$ can be thought of as the Laplace transform of the Poisson functional $\mathbb E_\omega$.

\begin{lemma}\label{xseaq43} For each $\xi\in\mathcal F(X)$,
$$\mathbb E_\omega(E_\xi) =\exp\big[\langle\omega,e^\xi-1\rangle\big].$$
\end{lemma}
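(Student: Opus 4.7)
The plan is to unwind both sides using the explicit formula \eqref{tyfdqwd6rqwx} for $\mathbb E_\omega$ and then interchange two infinite sums. Choose $\Lambda\in\mathcal B_0(X)$ containing $\operatorname{Supp}(\xi)$ so that $\xi$ vanishes outside $\Lambda$ and $e^\xi-1$ vanishes outside $\Lambda$ as well. Since each polynomial $m^{(n)}_\xi(\omega)=\langle\omega,\xi\rangle^n$ satisfies $m^{(n)}_\xi(\eta)=m^{(n)}_\xi(\eta_\Lambda)$, formula \eqref{tyfdqwd6rqwx} applied to $F=m^{(n)}_\xi$ gives
\begin{equation*}
\mathbb E_\omega(m^{(n)}_\xi)=e^{-\omega(\Lambda)}\sum_{k=0}^\infty\frac1{k!}\int_{\Lambda^k}\big(\xi(x_1)+\dots+\xi(x_k)\big)^n\,\omega^{\otimes k}(dx_1\dotsm dx_k),
\end{equation*}
where the $k$-series converges since $|\xi(x_1)+\dots+\xi(x_k)|^n\le(\|\xi\|_\infty k)^n$ and $\sum_k (\|\xi\|_\infty k)^n|\omega|(\Lambda)^k/k!<\infty$.

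Next, substituting this into \eqref{vuffr7} and interchanging the two sums, one obtains formally
\begin{equation*}
\mathbb E_\omega(E_\xi)=e^{-\omega(\Lambda)}\sum_{k=0}^\infty\frac1{k!}\int_{\Lambda^k}e^{\xi(x_1)}\dotsm e^{\xi(x_k)}\,\omega^{\otimes k}(dx_1\dotsm dx_k)=e^{-\omega(\Lambda)}\exp\big[\langle\omega,e^{\xi}\rangle\big],
\end{equation*}
since the inner sum over $n$ is the exponential series $\exp[\xi(x_1)+\dots+\xi(x_k)]$ and the resulting $k$-series is the exponential of $\int_\Lambda e^{\xi(x)}\,\omega(dx)=\langle\omega,e^\xi\rangle$ (the latter equality because $e^\xi=1$ outside $\Lambda$ contributes nothing to $\langle\omega,e^\xi\rangle-\omega(\Lambda)$). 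Using $\langle\omega,e^\xi\rangle-\omega(\Lambda)=\langle\omega,e^\xi-1\rangle$ (both sides well defined because $e^\xi-1$ vanishes outside $\Lambda$) yields the claimed identity.

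The one step that needs care is the interchange of the sums over $n$ and $k$. I would justify it by Tonelli/Fubini applied to the series with absolute values: using $|\xi(x_1)+\dots+\xi(x_k)|^n\le(\|\xi\|_\infty k)^n$,
\begin{equation*}
\sum_{n=0}^\infty\sum_{k=0}^\infty\frac1{n!\,k!}\int_{\Lambda^k}\big(\|\xi\|_\infty k\big)^n\,|\omega|^{\otimes k}(dx_1\dotsm dx_k)=\sum_{k=0}^\infty\frac{e^{\|\xi\|_\infty k}|\omega|(\Lambda)^k}{k!}=\exp\big[e^{\|\xi\|_\infty}|\omega|(\Lambda)\big]<\infty,
\end{equation*}
where $|\omega|$ denotes the total variation of $\omega$. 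This absolute convergence legitimizes the swap of the two sums (and of sum and integral), and after the swap all remaining manipulations are elementary. This absolute-convergence estimate is the only non-trivial point; everything else is bookkeeping.
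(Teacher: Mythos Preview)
Your proof is correct and follows essentially the same route as the paper. The paper compresses the argument into two lines: it first observes that \eqref{vuffr7} yields $\mathbb E_\omega(E_\xi)=\int_{\ddot\Gamma_0(\Lambda)}e^{\langle\eta,\xi\rangle}\,\mathbb P^\Lambda_\omega(d\eta)$ (which hides exactly the sum interchange you spell out), and then reads off the result from \eqref{tyfdqwd6rqwx}; your version is simply more explicit about the Fubini/Tonelli justification for swapping the $n$- and $k$-sums.
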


\begin{proof}
It follows from \eqref{vuffr7} that, if the function $f\in \mathcal F(X)$ vanishes outside $\Lambda\in\mathcal B_0(X)$, then 
\begin{equation}\label{vcyds5uwq3}
\mathbb E_\omega(E_\xi)=\int_{\ddot\Gamma_0(\Lambda)}e^{\langle \eta,\xi\rangle}\,\mathbb P^\Lambda_\omega(d\eta).  \end{equation}
From  \eqref{tyfdqwd6rqwx}  and \eqref{vcyds5uwq3} the statement immediately follows.
\end{proof}

The following proposition is an extension of the Mecke identity for a Poisson point process \cite{Mec67}.

\begin{proposition}\label{g6ew4} 
Let $\Lambda\in\mathcal B_0(X)$ and let $F:\ddot\Gamma_0(\Lambda)\times\Lambda$ be measurable and bounded. Then
$$\int_{\ddot\Gamma_0(\Lambda)}\mathbb P_\omega^{\Lambda}(d\eta)\int_\Lambda\eta(dx)F(\eta,x)=\int_{\ddot\Gamma_0(\Lambda)}\mathbb P_\omega^{\Lambda}(d\eta)\int_\Lambda\omega(dx)F(\eta+\delta_x,x).$$
\end{proposition}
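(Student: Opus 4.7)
The plan is to expand both sides of the claimed identity using the explicit formula \eqref{tyfdqwd6rqwx} that defines $\mathbb P_\omega^\Lambda$ as a weighted sum of symmetrized tensor powers of $\omega$, and then match the resulting series term by term via a symmetry/reindexing argument, in close analogy with the classical derivation of Mecke's identity for a Poisson point process.

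More concretely, I would first rewrite the left-hand side as
\begin{align*}
\text{LHS}&=e^{-\omega(\Lambda)}\sum_{n=0}^\infty\frac1{n!}\int_{\Lambda^n}\bigg(\sum_{i=1}^n F([x_1,\dots,x_n],x_i)\bigg)\omega^{\otimes n}(dx_1\dotsm dx_n),
\end{align*}
using that, for $\eta=[x_1,\dots,x_n]$, the multiset $\eta$ viewed as a measure satisfies $\int_\Lambda G(x)\,\eta(dx)=\sum_{i=1}^n G(x_i)$. Because the function $[x_1,\dots,x_n]\mapsto F([x_1,\dots,x_n],x_i)$ depends on $(x_1,\dots,x_n)$ only through the multiset they form, and $\omega^{\otimes n}$ is a symmetric measure, each of the $n$ summands contributes the same integral, so the inner sum can be replaced by $n\,F([x_1,\dots,x_n],x_1)$. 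The factor $n/n!=1/(n-1)!$ together with the substitution $n=m+1$ then gives
\begin{align*}
\text{LHS}&=e^{-\omega(\Lambda)}\sum_{m=0}^\infty\frac1{m!}\int_\Lambda\omega(dx)\int_{\Lambda^m}F([x,y_1,\dots,y_m],x)\,\omega^{\otimes m}(dy_1\dotsm dy_m).
\end{align*}
Applying Fubini to pull $\int_\Lambda\omega(dx)$ outside, and noting that $[x,y_1,\dots,y_m]=[y_1,\dots,y_m]+\delta_x$, the remaining inner expression is precisely $\int_{\ddot\Gamma_0(\Lambda)}F(\eta+\delta_x,x)\,\mathbb P_\omega^\Lambda(d\eta)$ by \eqref{tyfdqwd6rqwx}, which yields the right-hand side.

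The only delicate step is justifying the interchanges of summation and integration. Since $\omega\in M(X)$, its total variation $|\omega|$ is a positive Radon measure, hence $|\omega|(\Lambda)<\infty$. Combined with the boundedness of $F$, this gives the uniform bound $\sum_{n=0}^\infty\frac1{n!}\int_{\Lambda^n}n\|F\|_\infty\,|\omega|^{\otimes n}(dx_1\dotsm dx_n)=\|F\|_\infty|\omega|(\Lambda)e^{|\omega|(\Lambda)}<\infty$, so dominated convergence and Fubini both apply without issue. The measurability of $(\eta,x)\mapsto F(\eta+\delta_x,x)$ and of the symmetrized integrands follows from the definition of $\mathcal B(\ddot\Gamma_0(\Lambda))$, so no additional hypotheses are required. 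This absolute convergence bound is really the only thing to check carefully; the rest is bookkeeping identical to the classical proof.
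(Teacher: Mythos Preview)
Your proof is correct and follows essentially the same approach as the paper's: expand both sides via the defining series \eqref{tyfdqwd6rqwx}, use the symmetry of $\omega^{\otimes n}$ to collapse $\sum_{i=1}^n$ into a factor of $n$, and reindex. Your version is in fact slightly more careful, as you supply the absolute-convergence bound justifying Fubini and the interchange of sums, which the paper's three-line proof leaves implicit.
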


\begin{proof}We have
\begin{align*}
&\int_{\ddot\Gamma_0(\Lambda)}\mathbb P_\omega^{\Lambda}(d\eta)\int_\Lambda\eta(dx)F(\eta,x)=e^{-\omega(\Lambda)}\sum_{n=1}^\infty\frac1{n!}
\int_{\Lambda^n}\omega^{\otimes n}(dx_1\dotsm dx_n)\sum_{i=1}^n F([x_1,\dots,x_n],x_i)\\
&\quad= e^{-\omega(\Lambda)}\sum_{n=1}^\infty\frac{n}{n!}\int_\Lambda\omega(dx)\int_{\Lambda^{n-1}}\omega^{\otimes(n-1)}(dx_1\dotsm dx_{n-1})F([x_1,\dots,x_{n-1},x],x)\\
&\quad=\int_{\ddot\Gamma_0(\Lambda)}\mathbb P_\omega^{\Lambda}(d\eta)\int_\Lambda\omega(dx)F(\eta+\delta_x,x).\qedhere
\end{align*}
\end{proof}

\end{document}